\newtheorem{theorem}{Theorem}[section]
\newtheorem{lemma}[theorem]{Lemma}
\newtheorem{definition}[theorem]{Definition}
\newtheorem{proposition}[theorem]{Proposition}
\newtheorem{remark}[theorem]{Remark}
\newcommand{\bE}{\mathbb{E}}
\newcommand{\bI}{\mathbb{I}}
\newcommand{\bP}{\mathbb{P}}
\newcommand{\hsig}{\hat{\sigma}}
\newcommand{\ts}{\textsuperscript}
\begin{document}

\begin{frontmatter}

\title{Consistent Parameter Estimation for LASSO and Approximate Message Passing}
\runtitle{Consistent Parameter Estimation for LASSO}

\begin{aug}
  \author{\fnms{Ali}  \snm{Mousavi}\corref{}\ead[label=e1]{ali.mousavi@rice.edu}},
  \author{\fnms{Arian} \snm{Maleki}\ead[label=e2]{arian@stat.columbia.edu}}
  \and
  \author{\fnms{Richard G.}  \snm{Baraniuk}
  \ead[label=e3]{richb@rice.edu}}


  \runauthor{A. Mousavi et al.}

  \affiliation{Rice University and Columbia University}

  \address{Addresses of the First, Second and Third authors,\\ 
          \printead{e1,e2,e3}}

\end{aug}

\begin{abstract}
%
We consider the problem of recovering a vector $\beta_o \in \mathbb{R}^p$ from $n$ random and noisy linear observations $y= X\beta_o + w$, where $X$ is the measurement matrix and $w$ is noise. 
The LASSO estimate is given by the solution to the optimization problem $\hat{\beta}_{\lambda} = \arg \min_{\beta} \frac{1}{2} \|y-X\beta\|_2^2 + \lambda \| \beta \|_1$. 
Among the iterative algorithms that have been proposed for solving this optimization problem, approximate message passing (AMP) has attracted attention for its fast convergence. The iterations of AMP are given by
\begin{eqnarray*}
\beta^{t+1} &=& \eta(\beta^t + X^* z^t; \tau^t), \nonumber \\
z^t &=& y- X\beta^t + \frac{|I^t|}{n} z^{t-1},
\end{eqnarray*}
where $\beta^t$, $z^t$, and $I^t$ denote estimates of $\beta_o$, $y-X \beta_o$, and the active set of $\beta_o$ at iteration $t$ respectively.  $\eta(x_i; \tau) = (|x_i|- \tau)_+{\rm sign}(x_i)$ denotes the soft thresholding function with threshold parameter $\tau$. 

Despite significant progress in the theoretical analysis of the estimates of LASSO and AMP, little is known about their behavior as a function of the regularization parameter $\lambda$, or the thereshold parameters $\tau^t$. For instance the following basic questions have not yet been studied in the literature:  (i) How does the size of the active set $\|\hat{\beta}^\lambda\|_0/p$ behave as a function of $\lambda$? (ii) How does the mean square error $\|\hat{\beta}_{\lambda} - \beta_o\|_2^2/p$ behave as a function of $\lambda$? (iii) How does $\|\beta^t - \beta_o \|_2^2/p$ behave as a function of $\tau^1, \ldots, \tau^{t-1}$? Answering these questions will help in addressing practical challenges regarding the optimal tuning of $\lambda$ or $\tau^1, \tau^2, \ldots$.

This paper answers these questions in the asymptotic setting (i.e., where $p \rightarrow \infty$, $n \rightarrow \infty$ while the ratio $n/p$ converges to a fixed number in $(0,1)$) and shows how these results can be employed in deriving simple and theoretically optimal approaches for tuning the parameters $\tau^1, \ldots, \tau^t$ for AMP or $\lambda$ for LASSO. It also explores the connection between the optimal tuning of the parameters of AMP and the optimal tuning of LASSO.  
\end{abstract}

\begin{keyword}[class=MSC]
\kwd{62G05}
\kwd{62J05}
\end{keyword}

\begin{keyword}
\kwd{Sparsity}
\kwd{LASSO}
\kwd{Approximate Message Passing}
\end{keyword}

\end{frontmatter}

\newpage

\section{Introduction}\label{sec:intro}

\subsection{Motivation}\label{ssec:motive}
Consider the problem of recovering a vector $\beta_o \in \mathbb{R}^p$ from a set of undersampled random linear measurements $y= X \beta_o+w$, where $X \in \mathbb{R}^{n \times p}$ is the measurement matrix and $w \in \mathbb{R}^n$ denotes noise. One of the most successful recovery algorithms, called the LASSO or basis pursuit denoising \citep{Tiblasso96, ChDoSa98}, employs the following optimization problem to obtain an estimate of $\beta_o$:

\begin{equation}\label{eq:LASSO}
\hat{\beta}^\lambda = \arg \min_\beta \frac{1}{2}\|y- X\beta\|_2^2 + \lambda \|\beta\|_1.   
\end{equation}

A rich literature has provided a detailed analysis of this algorithm \citep{DET, Tropp, ZhYu06, MeYu09, BiRiTs08, MeBu06, GeBu09, BuTsWe07, KnFu2000, ZoHaTib2007, DoTa05, Do05, DoTa08, DoTa09, DoTa09b, MalekiThesis, BaMo10, BaMo11,amelunxen2013living,oymak2012relation, tibshirani2013lasso, lockhart2012significance,wasserman2009high, zhang2009some, cai2009recovery, cai2010shifting,cai2010stable,raskutti2011minimax,zhang2008sparsity,zhang2006model}.  Most of the results fall into two categories: (i) non-asymptotic and (ii) asymptotic results. The non-asymptotic results consider $p$ and $n$ to be large but finite numbers and characterize the reconstruction error as a function of $p$ and $n$. These analyses provide qualitative guidelines on how to design compressive sensing (CS) and machine learning systems. However, they suffer from loose constants and are incapable of providing quantitative guidelines. Therefore, inspired by the seminal work of Donoho and Tanner \citep{DoTa05}, researchers have started performing asymptotic analyses of LASSO. Such analyses not only provide sharp quantitative guidelines, but have also led to  new recovery algorithms such as {\em Approximate Message Passing} (AMP) \citep{DoMaMo09}.

AMP was first proposed as a fast converging iterative algorithm for solving LASSO. Unlike traditional techniques for solving LASSO such as interior point method that are computationally demanding, AMP employs the following inexpensive iterations:
\vspace{-.2cm}
  \begin{eqnarray}\label{eq:ampeq1}
\beta^{t+1} &=& \eta(\beta^t + X^* z^t; \tau^t), \nonumber \\
z^t &=& y- X\beta^t + \frac{|I^t|}{n} z^{t-1}.
\end{eqnarray}
Here $t$ is the index of iteration, $\beta^t$ is the estimate of $\beta_o$ at iteration $t$, and $I^t \triangleq \{i \ : \  \beta_i^t \neq 0  \}$. $\eta$ is the soft thresholding function applied component-wise to the elements of the vector; for $a \in \mathbb{R}$, $\eta (a ; \tau) \triangleq (|a| - \tau)_{+} {\rm sign}(a)$. $\tau^t$ is called the threshold parameter. 

The performance of AMP and LASSO relies on the tuning of the free parameters $\lambda$ or $\tau^1, \tau^2, \ldots$. One important analysis that may help in searching for the optimal value of $\lambda$ and/or $\tau^t$ is the behavior of the solution $\hat{\beta}^{\lambda} $ (or $\beta^t$) as a function of $\lambda$ (or $\tau^t$). In this paper, we conduct such an analysis and demonstrate how such results can be employed for designing efficient algorithms for tuning the parameters of AMP or LASSO.

\subsection{Related work on parameter tuning}
The properties of the solution path of LASSO and AMP as their parameters change have not been studied before. However, both the tuning of the regularization parameter of LASSO and the threshold parameters of AMP have been studied in the literature. The proposed methods fall into the following three different categories: 
\begin{itemize}
\item [(i)]  General model selection ideas such as cross validation are probably the most popular approach in practice. For a review of these schemes, see Chapter 7 of \cite{hastie2005elements}. While very useful in applications, these ideas have their own limitations. We summarize some of their limitations in the context of our paper: (i) AMP has many free parameters (the threshold parameters at every iteration) and if we blindly apply these techniques their estimate of the risk will suffer from high variance and will lead to poor estimates of the threshold parameters. While we can resolve this issue by considering specific types of thresholds, they may degrade the performance of AMP. (ii) There are very few papers that have studied the accuracy of these generic model selection techniques in the high dimensional settings. For the case of LASSO there has been a few papers tackling this issue \cite{chatterjee2015prediction, homrighausen2014leave}. Similar to most of the analysis of the LASSO algorithm, these two papers have analyzed the performance of the cross validation (or methods inspired by cross validation) in the regime where $p$ and $n$ are both large but finite. Hence, their results also suffer from limitations similar to the ones we discussed in Section \ref{ssec:motive}.       
 In this paper we employ one of the standard model selection techniques, namely Stein Unbiased Risk Estimate (SURE), in our asymptotic framework and show how the properties of the solution path of AMP and LASSO enable us to not only obtain an efficient parameter tuning scheme, but also prove the consistency of these schemes under the asymptotic setting.  
 
\item[(ii)] The second approach employs the upper bounds derived in the literature on the risk of the estimators such as LASSO. For instance, \cite{candes2006stable,candes2008restricted, bickel2009simultaneous} suggest that the regularization parameter $\lambda$ should have the form of $c \sigma \sqrt{\log p}$ (when $k$ is much smaller than $p$), where $c$ is a fixed number that does not depend on the dimension of the problem. Such approaches have their limitations as well: (i) They are usually based on the minimax principle and hence might be considered as a pessimistic approach for tuning. (ii) The constants of these calculations are loose (even though the bounds are usually order-optimal), and hence the tuning that is based on such bounds do not lead to good performance in practice.  

\item[(iii)] The third approach, which is the closest to our paper, is based on asymptotic analysis of recovery algorithms. The first step in this approach is to employ asymptotic settings to obtain an accurate estimate of the reconstruction error of the recovery algorithms. This is done through either theoretical work or simulation. The next step is to employ this asymptotic analysis to obtain the optimal value of the parameters. This approach is employed in \cite{DoMaMo09, DoMaMoNSPT}. The main drawback of this approach is that the user must know the signal model (or at least an upper bound on the sparsity level of the signal) to obtain the optimal value of the parameters. Usually, an accurate signal model is not available in practice and hence the tuning should consider the least favorable signal, which leads to pessimistic tuning of the parameters. Our tuning approach is date-dependent and hence does not require any oracle information and it adapts itself to the statistics of the signal. Also, by employing the properties of the solution path we show how the tuning can be performed efficiently. 

\end{itemize}

\section{Asymptotic framework}\label{sec:asympframe}
In this section we review the asymptotic framework under which we analyze the solution path of LASSO and AMP. Furthermore, we review some of the existing results that will be used later in our analysis. 

\subsection{Notation}\label{sec:not}
 Capital letters denote both matrices and random variables. As we may consider a sequence of vectors of different sizes, sometimes we denote $\beta$ with $\beta(p)$ to emphasize its dependency on the ambient dimension. For a matrix $X$, $X^*$, $\sigma_{\rm min} (X)$, and $\sigma_{\max} (X)$ denote the transpose of $X$, the minimum, and the maximum singular values, respectively. Calligraphic letters such as $\mathcal{A}$ denote sets. For a vector $\beta \in \mathbb{R}^p$, $\beta_i$, $\| \beta\|_q \triangleq (\sum |\beta_i|^q)^{1/q}$, and $\|\beta\|_0 = |\{i \ : \  |\beta_i| \neq 0 \} |$ represent the $i^{\rm th}$ component, $\ell_q$, and $\ell_0$ norms respectively. The notation $\mathbb{E}_B $ denotes the expected value with respect to the randomness in the random variable $B$. The two functions $\phi$ and $\Phi$ denote the probability density function and cumulative distribution function of the standard normal distribution. We will also use notations $\overset{p}{\rightarrow}$ and $\overset{a.s.}{ \rightarrow}$ for the convergence in probability and almost sure respectively. Finally, $\mathbb{I}(\cdot)$ denotes the indicator function.

\subsection{LASSO in the asymptotic framework}\label{sec:asympframework}
In this paper we analyze the properties of the solution of LASSO and AMP when (i) the measurement matrix has iid Gaussian entries,\footnote{With the recent advances in high dimensional statistics \citep{BaMoLe12} our results can be easily extended to subgaussian matrices. However, for notational simplicity we focus on the Gaussian setting here. } and  (ii) the ambient dimension and the number of measurements are large. We let the measurement matrix to have iid Gaussian entries and adopt the {\em asymptotic framework} to incorporate these two features. Here is the formal definition of this framework \citep{DoMaMoNSPT, BaMo10}: Let $n,p \rightarrow \infty$ while $\delta = \frac{n}{p}$ is fixed. We write the vectors and matrices as $\beta_o(p), X(p), y(p)$, and $w(p)$ to emphasize on the ambient dimension of the problem. Clearly, the number of rows of the matrix $X$ is equal to $\delta p$, but we assume that $\delta$ is fixed, and therefore we do not include $n$ in our notation for $X$.  The same argument is applied to $y(p)$ and $w(p)$. Now we define a specific type of converging sequence as the following:

\begin{definition}\label{def:convseq}
A sequence of instances $\{\beta_o(p), X(p), w(p)\}$ is called a converging sequence if the following conditions hold:
\begin{itemize}
\item[-] The empirical distribution of $\beta_o(p) \in \mathbb{R}^p$ converges weakly to a probability measure $p_{\beta}$ with bounded second moment. Further, $\frac{1}{p} \|\beta_o(p)\|_2^2$ converges to the second moment of $p_{\beta}$.
\item[-] The empirical distribution of $w(p) \in \mathbb{R}^n$ ($n = \delta p$) converges weakly to a probability measure $p_W$ with bounded second moment. Furthermore, $\frac{1}{n} \|w(p)\|_2^2$ converges to the second moment of $N(0, \sigma_W^2)$. 
\item[-] If $\{e_i\}_{i=1}^p$ denotes the canonical basis for $\mathbb{R}^p$, then $\max_i \|X(p) e_i \|_2$ $, \min_i \|X(p) e_i \|_2 \rightarrow 1$ as $p \rightarrow \infty$. 
\end{itemize}
\end{definition}

Note that for the purposes of this section, $p_{\beta}$ is not necessarily a sparsity promoting prior. The last condition is equivalent to saying that the columns of $X$ have asymptotically unit $\ell_2$ norm. 

For each problem instance $\beta_o(p), X(p),$ and $w(p)$, we solve the LASSO and obtain $\hat{\beta}^{\lambda}(p)$ as the estimate. We now evaluate certain measures of performance for this estimate such as the MSE, etc. The next generalization formalizes the types of performance measures we are interested in. 

\begin{definition}\label{def:observablelasso}
Let $\hat{\beta}^{\lambda}(p)$ be the sequence of solutions of the LASSO problem for the converging sequence of instances $\{\beta_o(p), X(p), w(p)\}$. Consider a function $\psi: \mathbb{R}^2 \rightarrow \mathbb{R}$. An observable $J_{\psi}$ is defined as 
\[
J_{\psi} \left(\beta_o(p),\hat{\beta}^{\lambda}(p)\right) \triangleq \lim_{p \rightarrow \infty} \frac{1}{p} \sum_{i=1}^p \psi \left( \beta_{o,i}(p), \hat{\beta}^{\lambda}_i(p)\right).
\]
\end{definition}
Note that in the above definition we have assumed that the limit exists. A popular choice for $\psi$ is $\psi_M(u,v)= (u-v)^2$, which yields the MSE. 

 In the rest of the paper we assume that the elements of $X$ are iid $N(0,1/n)$ and $w_i \sim N(0, \sigma_w^2)$. Since $\hat{\beta}^{\lambda}(p)$ is random, there are two major questions that need to be addressed about $\lim_{p \rightarrow \infty}$ $ \frac{1}{p} \sum_{i=1}^N \psi \left( \beta_{o,i}(p), \hat{\beta}^{\lambda}_i(p)\right)$. (i) Does the limit exist and in what sense? (ii) Does the limit converge to a random variable or to a deterministic quantity?  The following theorem, conjectured in \citep{DoMaMoNSPT} and proved in \citep{BaMo11}, shows that under some restrictions on the $\psi$ function, not only does the almost sure limit exist in this scenario, but also it converges to a non-random number.

\begin{theorem}\label{thm:eqpseudolip} Consider a converging sequence $\{\beta_o(p), X(p), w(p)\}$, and let the elements of $X$ be drawn iid from $N(0,1/n)$. Suppose that $\hat{\beta}^{\lambda}(p)$ is the solution of the LASSO problem. 
Then for any pseudo-Lipschitz function\footnote{A function $\psi:\mathbb{R}^2 \rightarrow \mathbb{R}$ is pseudo-Lipschitz of order $k$ if there exists a constant $L>0$ such that for all $x,y \in \mathbb{R}^2$ we have $|\psi(x)-\psi(y)|\leq L(1+\|x\|_2+\|y\|_2)^k \|x-y\|_2$.} $\psi: \mathbb{R}^2 \rightarrow \mathbb{R}$, almost surely
\begin{equation}\label{eq:lassoobs}
\lim_{p \rightarrow \infty} \frac{1}{p} \sum_i \psi \left(\hat{\beta}^{\lambda}_i(p),{\beta}_{o,i}(p) \right) = \bE_{B,W} [\psi(\eta(B+\hsig W; \chi \hsig), B)],
\end{equation}
where $B$ and $W$ are two independent random variables with distributions $p_\beta$ and $N(0, 1)$, respectively, $\eta$ is the soft thresholding operator, and $\hsig$ and $\chi$ satisfy the following equations with $\sigma_{\omega}$ being the variance of the input noise:
\begin{eqnarray} \label{eq:fixedpoint}
\hsig^2 &=& \sigma_{\omega}^2+\frac{1}{\delta} \mathbb{E}_{B, W} [(\eta(B +\hsig W; \chi \hsig) -B)^2], \label{eq:fixedpoint11} \\
\lambda &=& \chi \hsig \left(1-\frac{1}{\delta} \mathbb{P}(|B +\hsig W| > \chi \hsig) \right). \label{eq:fixedpoint21}
\end{eqnarray}
\end{theorem}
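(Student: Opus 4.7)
The plan is to prove Theorem \ref{thm:eqpseudolip} by leveraging the Approximate Message Passing (AMP) algorithm as an analytic device, following the blueprint of Bayati and Montanari. The key idea is that (i) AMP iterates admit a sharp distributional description via \emph{state evolution}, (ii) when run with an appropriate threshold sequence AMP converges to the LASSO minimizer, and (iii) the pseudo-Lipschitz class is stable under these limiting operations.

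First, I would analyze AMP iterations \eqref{eq:ampeq1} with a deterministic threshold schedule $\tau^t = \chi_t \hsig_t$, where $(\hsig_t, \chi_t)$ are chosen so that the calibration condition (2.5) holds at each step. The core step is to establish the \emph{state evolution} statement: for any pseudo-Lipschitz $\psi$ of order $2$ and every fixed $t$,
\begin{equation*}
\lim_{p\to\infty}\frac{1}{p}\sum_{i=1}^p \psi\!\left(\beta^{t+1}_i(p),\beta_{o,i}(p)\right) \stackrel{a.s.}{=} \bE_{B,W}\!\left[\psi\!\left(\eta(B+\hsig_t W;\tau^t),B\right)\right],
\end{equation*}
with $\hsig_t$ evolving by $\hsig_{t+1}^2 = \sigma_\omega^2 + \delta^{-1}\bE_{B,W}[(\eta(B+\hsig_t W;\tau^t)-B)^2]$. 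The proof of this rests on conditioning arguments for the Gaussian design, isolating the Onsager correction term $\frac{|I^t|}{n}z^{t-1}$ so that the effective perturbation $\beta^t + X^* z^t - \beta_o$ is asymptotically i.i.d.\ $N(0,\hsig_t^2)$; this is exactly the conditional-distribution framework of Bousquet, Bolthausen, and Bayati-Montanari, which I would invoke directly.

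Next, I would analyze the fixed points of the state evolution. Using monotonicity and contractivity of the map $\hsig^2 \mapsto \sigma_\omega^2 + \delta^{-1}\bE_{B,W}[(\eta(B+\hsig W;\chi\hsig)-B)^2]$ for appropriate $\chi$ (a standard concavity/derivative argument for the soft-threshold MSE), one shows the existence of a unique stable fixed point $\hsig$, and that $\hsig_t \to \hsig$ as $t \to\infty$ from any reasonable initialization. Simultaneously, the calibration \eqref{eq:fixedpoint21} emerges naturally: it is the asymptotic form of the KKT condition $\lambda \in \partial\|\hat\beta^\lambda\|_1$ after identifying $|I^t|/n$ with $\delta^{-1}\bP(|B+\hsig W|>\chi\hsig)$ in the large-$p$ limit.

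The remaining and hardest step is to transfer the state-evolution statement from the AMP iterates $\beta^t$ to the LASSO optimum $\hat\beta^\lambda$. Concretely, one must prove
\begin{equation*}
\lim_{t\to\infty}\limsup_{p\to\infty}\frac{1}{p}\|\beta^t(p)-\hat\beta^\lambda(p)\|_2^2 \stackrel{a.s.}{=} 0,
\end{equation*}
so that the two limits in $t$ and $p$ may be exchanged. The standard route is to use strong convexity of the LASSO cost on the restricted set where the design matrix is well-conditioned (bounded restricted eigenvalues for iid Gaussian $X$), combine it with a Lyapunov/contractive argument showing that the LASSO cost decreases along AMP, and then apply an $\ell_2$-stability bound relating the cost gap to $\|\beta^t - \hat\beta^\lambda\|_2^2/p$. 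This is the delicate part: one must control the difference uniformly in $p$, which requires both the sharp state-evolution error estimates above and non-asymptotic bounds on $\sigma_{\min}$ and $\sigma_{\max}$ of submatrices of $X$. Once this $\ell_2$ closeness is established, the pseudo-Lipschitz property
\begin{equation*}
|\psi(\hat\beta_i^\lambda,\beta_{o,i}) - \psi(\beta^t_i,\beta_{o,i})| \leq L(1+\|\hat\beta^\lambda\|_2/\sqrt{p}+\|\beta^t\|_2/\sqrt{p}+\|\beta_o\|_2/\sqrt{p})\,|\hat\beta^\lambda_i-\beta^t_i|
\end{equation*}
together with Cauchy-Schwarz upgrades the AMP observable convergence to the LASSO observable convergence \eqref{eq:lassoobs}, completing the proof.
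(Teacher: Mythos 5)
You should first note that the paper does not actually prove Theorem \ref{thm:eqpseudolip}: it is imported verbatim from \citep{BaMo11} (having been conjectured in \citep{DoMaMoNSPT}), so there is no internal proof to compare against. What your sketch does is reconstruct the roadmap of the Bayati--Montanari proof, and as a roadmap it is essentially faithful: (i) state evolution for AMP with a deterministic threshold schedule (the content of Theorem \ref{thm:ampeqpseudo_lip}, proved in \citep{BaMo10} via Bolthausen's Gaussian conditioning technique --- note the conditioning lemma is due to Bolthausen, not Bousquet); (ii) uniqueness and stability of the state-evolution fixed point together with the calibration \eqref{eq:fixedpoint21} between $\chi$ and $\lambda$ (cf.\ Lemma \ref{lem:uniquefixedpointconc}); (iii) the transfer $\lim_{t}\lim_{p}\frac{1}{p}\|\beta^t(p)-\hat\beta^\lambda(p)\|_2^2=0$, which is exactly Theorem \ref{thm:equLASSOAMP}; and (iv) the pseudo-Lipschitz/Cauchy--Schwarz upgrade from AMP observables to LASSO observables. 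Two caveats. First, the entire difficulty of the theorem lives inside steps (i) and (iii), and your sketch invokes them rather than proves them; in particular, step (iii) in \citep{BaMo11} is not a routine strong-convexity argument but a delicate structural lemma showing that any point approximately satisfying the LASSO KKT conditions must be $\ell_2$-close to the minimizer, which relies on controlling minimum singular values of Gaussian submatrices over all supports of proportional size --- as written, ``strong convexity on the restricted set'' plus ``a Lyapunov argument'' undersells what must be established and would not compile into a proof without that lemma. Second, your displayed pseudo-Lipschitz inequality is garbled: the order-$k$ bound is pointwise, $|\psi(x_i)-\psi(y_i)|\leq L(1+\|x_i\|_2+\|y_i\|_2)^k\|x_i-y_i\|_2$ with $x_i=(\hat\beta^\lambda_i,\beta_{o,i})$, $y_i=(\beta^t_i,\beta_{o,i})$; the normalized global norms only appear after averaging over $i$ and applying Cauchy--Schwarz (and H\"older for $k=2$). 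The conclusion of that step is nonetheless correct. In short: correct strategy, matching the external source the paper cites, but the two load-bearing ingredients are cited rather than supplied.
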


Theorem \ref{thm:eqpseudolip} will provide the first step in our analysis of the LASSO's solution path. Before we proceed to the implications of this theorem, let us explain some of its interesting features. Suppose that $\hat{\beta}^{\lambda}$ has iid elements, and each element is in law equal to $\eta(B +\hsig W; \chi \hsig)$, where $B {\sim } p_\beta$ and $W {\sim} N(0, 1)$. Also, assume that $\beta_{o,i} \overset{iid}{\sim} p_\beta$. If these two assumptions were true, then we could use strong law of large numbers (SLLN) and argue that \eqref{eq:lassoobs} were true under some mild conditions (as required for the SLLN). While this heuristic is not quite correct, and the elements of $\hat{\beta}^{\lambda}_i$ are not necessarily independent, at the level of calculating observables defined in Definition \ref{def:observablelasso} (and $\psi$ being pseudo Lipschitz) this theorem confirms the heuristic. Note that the key elements that have led to this heuristic is the randomness in the measurement matrix and the large size of the problem.



%
%
%

\begin{remark}\label{thm:lassodetstate}
We are also interested in another observable and that is $\lim_{p \rightarrow \infty} \frac{\|\hat{\beta}_{\lambda} \|_0}{p}$. This observable can be constructed by using $\psi(u,v) = \mathbb{I} (v \neq 0)$. However, the $\psi$ function is not pseudo-Lipschitz, and hence Theorem \ref{thm:eqpseudolip} does not apply.  However, as conjectured in \citep{DoMaMoNSPT} and proved in \citep{BaMo11} we can still claim that 
\[
\lim_{p \rightarrow \infty} \frac{1}{p} \sum_i \mathbb{I} \left(\hat{\beta}^{\lambda}_i(p) \neq 0 \right)  = \mathbb{P} (| \eta(B+\hsig W; \chi \hsig)|>0),
\]
where  $\chi$, $\tau$, and $\hsig$ satisfy \eqref{eq:fixedpoint11} and \eqref{eq:fixedpoint21}.
\end{remark}

\subsection{AMP in the asymptotic setting}
In this section we review some background on the asymptotic analysis of AMP. This section is mainly based on the results in \citep{DoMaMo09, DoMaMoNSPT, BaMo10}, and the interested reader is referred to these papers for further details. AMP is an iterative thresholding algorithm. Therefore, we would like to know the discrepancy of its estimate at every iteration from the original vector $\beta_o$. The following definition formalizes different discrepancy measures for the AMP estimates.

\begin{definition} \label{def:observables}
Let $\{\beta_o(p), X(p), w(p)\}$ denote a converging sequence of instances. 
Let $\beta^{t}(p)$ be the estimate of AMP at iteration $t$. Consider a function $\psi: \mathbb{R}^2 \rightarrow \mathbb{R}$. An observable $J_{\psi}$ at time $t$ is defined as 
\[
J_{\psi} \left(\beta_o(p), \beta^{t}(p) \right) = \lim_{p \rightarrow \infty} \frac{1}{p} \sum_{i=1}^p \psi \left(\beta_{o,i}(p), {\beta}^{t}_{i}(p) \right).
\]
\end{definition}
As before, we can consider $\psi(u,v) = (u-v)^2$ that leads to the normalized MSE of AMP at iteration $t$.  The following result originally conjectured in \citep{DoMaMo09, DoMaMoNSPT} and finally proved in \citep{BaMo10}, provides a simple description of the almost sure limits of the observables. 

\begin{theorem}\label{thm:ampeqpseudo_lip} Consider the converging sequence $\{\beta_o(p), X(p), w(p)\}$, and let the elements of $X$ be drawn iid from $N(0,1/n)$. Suppose that ${\beta}^{t}(p)$ is the estimate of AMP at iteration $t$. 
Then for any pseudo-Lipschitz function $\psi: \mathbb{R}^2 \rightarrow \mathbb{R}$
\[
\lim_{p \rightarrow \infty} \frac{1}{p} \sum_i \psi \left({\beta}^{t}_{i}(p),{\beta}_{o,i}(p) \right) = E_{B,W} \left[\psi(\eta(B+ \sigma^t W; \tau^t), B)\right]
\] 
almost surely, where $B$ and $W$ are two independent random variables with distributions $p_\beta$ and $N(0,1)$, respectively. Furthermore, starting with $(\sigma^0)^2 = {\mathbb{E} \left[B^2\right]}/{\delta}$, $\sigma^t$ satisfies
\begin{eqnarray} \label{eq:ampevolution}
(\sigma^{t+1})^2 = \sigma_{\omega}^2+\frac{1}{\delta} \mathbb{E}_{B, W} \left[(\eta(B + \sigma^t W; \tau^t ) -B)^2\right].
\end{eqnarray}
\end{theorem}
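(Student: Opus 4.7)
The plan is to prove both claims of the theorem simultaneously by induction on $t$, following the conditioning technique originating in Bolthausen's work on the TAP equations and developed for AMP in \citep{BaMo10}. The core heuristic is that, thanks to the Onsager correction $\frac{|I^t|}{n} z^{t-1}$, the effective observation $\beta^t + X^* z^t$ behaves asymptotically like $\beta_o + \sigma^t Z$ with $Z \sim N(0, I_p)$ independent of $\beta_o$. Granting this, applying $\eta(\cdot; \tau^t)$ componentwise and a pseudo-Lipschitz law-of-large-numbers argument deliver the target limit $\bE_{B,W}[\psi(\eta(B+\sigma^t W; \tau^t), B)]$; the recursion \eqref{eq:ampevolution} then drops out by matching the second moment of the residual $y - X\beta^{t+1}$ against $\sigma_{\omega}^2 + \frac{1}{\delta}\bE_{B,W}[(\eta(B+\sigma^t W; \tau^t) - B)^2]$.

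To make this precise, I would introduce the auxiliary sequences $q^t \triangleq \beta^t - \beta_o$ and $h^{t+1} \triangleq X^* z^t + q^t$, and rewrite the AMP recursion so that the Onsager term is visibly cancelling the self-feedback of $X$ on $z^t$. Indeed, because $|I^t|/n$ converges to $\bE[\eta'(B + \sigma^{t-1} W; \tau^{t-1})]$, this term asymptotically realises the mean of the effective ``derivative'' needed to remove the bias that would otherwise appear in $X^* z^t$. The heart of the argument is then a conditional distribution lemma: conditional on the $\sigma$-field $\mathcal{F}_t$ generated by $(q^0, \ldots, q^t, z^0, \ldots, z^{t-1})$, the matrix $X$ is still Gaussian subject to a finite number of linear constraints encoding what the past iterates reveal about $X$. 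Expanding $X^* z^t$ under the constrained Gaussian law decomposes $h^{t+1}$ as the sum of an independent Gaussian vector with per-coordinate variance $(\sigma^t)^2$ plus a remainder, which the induction hypothesis shows is asymptotically negligible in empirical distribution.

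The main obstacle lies in choosing a strong enough inductive hypothesis. Because the Onsager correction at step $t$ couples $z^t$ with $z^{t-1}$, one cannot simply track $(\beta_o, \beta^t)$ in isolation; instead one must control an entire joint empirical distribution over all previous iterates $(q^0, h^1, q^1, \ldots, h^t, q^t, \beta_o)$, together with all pairwise empirical inner products $\frac{1}{p}\langle h^s, h^r\rangle$ and $\frac{1}{p}\langle q^s, q^r\rangle$, and show that these converge to deterministic limits obeying a matrix-valued state evolution. This in turn requires careful concentration bounds for Gaussian quadratic forms and for pseudo-Lipschitz functions of high-dimensional Gaussian vectors, and uses the converging-sequence conditions of Definition \ref{def:convseq}, particularly the bounded-second-moment hypothesis on $p_\beta$ and the $\ell_2$ normalization of the columns of $X$, to absorb the boundary contributions from $\beta_o$ and $w$.

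Once the joint empirical distribution is under control, the scalar statement of the theorem is an immediate projection onto the coordinates $(\beta_o, \beta^{t+1})$, and the variance recursion \eqref{eq:ampevolution} follows by applying the convergence to the particular pseudo-Lipschitz test function $\psi(u,v) = \delta^{-1}(v-u)^2$ and adding $\sigma_{\omega}^2$, which comes from $\frac{1}{n}\|w\|_2^2 \to \sigma_{\omega}^2$ in the converging sequence. The initialization $(\sigma^0)^2 = \bE[B^2]/\delta$ is consistent with starting at $\beta^0 = 0$, for which $h^1 = X^* y$ and a direct computation gives exactly this variance, providing the base case of the induction.
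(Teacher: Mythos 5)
The paper does not prove this theorem: it is quoted as a known result, conjectured in \cite{DoMaMo09, DoMaMoNSPT} and proved in \cite{BaMo10}, so there is no in-paper argument to compare against. Your outline is a faithful summary of the strategy of that cited proof --- Bolthausen-style conditioning of the Gaussian matrix on the $\sigma$-field of past iterates, cancellation of the self-feedback by the Onsager term, an inductive hypothesis tracking the full joint empirical distribution and the Gram matrix of the iterates, and a pseudo-Lipschitz law of large numbers yielding the scalar limit and the recursion \eqref{eq:ampevolution}. Be aware, though, that as written this is a roadmap rather than a proof: the conditional distribution lemma, the matrix-valued state evolution, and the concentration estimates are named but not established, and these constitute essentially all of the technical work in \cite{BaMo10}. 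One small point to check in the base case: with $\beta^0=0$ and $z^0=y$ the effective noise variance of $X^*y$ picks up a $\sigma_{\omega}^2$ contribution in addition to $\bE[B^2]/\delta$, so the claim that a ``direct computation gives exactly'' the stated initialization should be reconciled with the paper's convention for $(\sigma^0)^2$.
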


Equation \eqref{eq:ampevolution} is known as the {\em state evolution (SE)} for AMP. Similar to our discussion of the solution of the LASSO, Theorem \ref{thm:ampeqpseudo_lip} claims that, as long as the calculation of the pseudo-Lipschitz observables is concerned, we can assume that estimates of AMP are modeled as iid elements with each element modeled in law as $\eta (B + \sigma^{t}W;\tau^t)$, where $B \sim p_\beta$ and $W \sim N(0,1)$.  As in Remark \ref{thm:lassodetstate}, we can establish $\lim_{p \rightarrow \infty} \frac{\| {\beta}^{t}(p)\|_0 }{p} = \mathbb{P} (|B+ \sigma^t W| \geq \tau^t )$ even though the $\psi(u,v) = I(v \neq 0)$ function is not pseudo-Lipschitz  \citep{BaMo10}.

One major feature of AMP that we will employ in this paper is that if we set $\tau^t$ ``appropriately,'' then the fixed point of AMP corresponds to the solution of LASSO in the asymptotic regime. One such choice of parameters is the fixed false alarm threshold given by $\tau^t = \chi \sigma^t$, where $\sigma^t$ satisfies \eqref{eq:ampevolution} and $\chi$ is a fixed number. The following result, conjectured in \citep{DoMaMoNSPT, DoMaMo09} and later proved in \citep{BaMo11} formalizes this statement. 

\begin{theorem}\citep{BaMo11}\label{thm:equLASSOAMP}
Consider the converging sequence $\{\beta_o(p), X(p), w(p)\}$ and let the elements of $X$ be drawn iid from $N(0,1/n)$. Let $\beta^t(p)$ be the estimate of the AMP algorithm with parameter $\tau^t =\chi \sigma^t$, where $\sigma^t$ satisfies \eqref{eq:ampevolution}. Assume that $\lim_{t \rightarrow \infty} (\sigma^t)^2=\hsig^2$. Finally, let $\hat{\beta}^{\lambda}$ denote the solution of the LASSO with parameter $\lambda$ that satisfies $\lambda = \chi \hsig (1- \mathbb{P} (|B+ \hsig W| \geq \chi \hsig ))$. Then, $\lim_{t \rightarrow \infty} \lim_{p \rightarrow \infty} \frac{1}{p} \|\hat{\beta}^\lambda(p) - {\beta}^t(p)\|_2^2 =0$ almost surely. 
\end{theorem}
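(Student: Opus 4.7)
The strategy is to exploit the fact that both the AMP iterates and the LASSO solution are, asymptotically, soft-thresholded Gaussians driven by the \emph{same} effective noise once the state evolution has reached its fixed point.

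First, I would apply Theorem \ref{thm:ampeqpseudo_lip} with the pseudo-Lipschitz function $\psi(u,v)=(u-v)^2$ to obtain $\lim_{p\to\infty} p^{-1}\|\beta^t(p) - \beta_o(p)\|_2^2 = \mathbb{E}[(\eta(B+\sigma^t W;\tau^t) - B)^2]$ almost surely. Since $\tau^t = \chi\sigma^t$ and $\sigma^t\to\hsig$ by hypothesis, this converges as $t\to\infty$ to $\mathbb{E}[(\eta(B+\hsig W;\chi\hsig) - B)^2]$. Symmetrically, Theorem \ref{thm:eqpseudolip} applied with the same $\psi$ yields $p^{-1}\|\hat\beta^\lambda(p) - \beta_o(p)\|_2^2 \to \mathbb{E}[(\eta(B+\hsig W;\chi\hsig) - B)^2]$, using that $\lim_t(\sigma^t)^2=\hsig^2$ is exactly a fixed point of the SE \eqref{eq:ampevolution} and therefore satisfies \eqref{eq:fixedpoint11}, while the calibration between $\lambda$ and $(\chi,\hsig)$ stated in the hypothesis is \eqref{eq:fixedpoint21}. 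So the two marginal mean-square errors agree in the iterated limit.

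Marginal MSE agreement alone does not yield the conclusion, since two estimators can share an MSE yet have non-vanishing pairwise $\ell_2$ distance. The key step is to upgrade Theorem \ref{thm:ampeqpseudo_lip} to a \emph{joint} pseudo-Lipschitz statement on the triple $(\beta_o,\beta^t,\hat\beta^\lambda)$ with a \emph{common} $W\sim N(0,1)$: for any pseudo-Lipschitz $\psi:\mathbb{R}^3\to\mathbb{R}$,
\[
\lim_{p\to\infty} \frac{1}{p}\sum_{i=1}^p \psi\bigl(\beta_{o,i}(p),\beta^t_i(p),\hat\beta^\lambda_i(p)\bigr) = \mathbb{E}\bigl[\psi\bigl(B,\eta(B+\sigma^t W;\tau^t),\eta(B+\hsig W;\chi\hsig)\bigr)\bigr].
\]
Once this coupling is in hand, choosing $\psi(u,v,w)=(v-w)^2$ gives
\[
\lim_{t\to\infty}\lim_{p\to\infty} \frac{1}{p}\|\hat\beta^\lambda(p)-\beta^t(p)\|_2^2 = \lim_{t\to\infty}\mathbb{E}\bigl[(\eta(B+\sigma^t W;\tau^t)-\eta(B+\hsig W;\chi\hsig))^2\bigr] = 0,
\]
since the soft-thresholder is $1$-Lipschitz jointly in both arguments and $(\sigma^t,\tau^t)\to(\hsig,\chi\hsig)$.

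The principal obstacle is establishing this common-$W$ coupling, which is the technical heart of the argument and cannot be obtained by using Theorems \ref{thm:eqpseudolip} and \ref{thm:ampeqpseudo_lip} as black boxes. The route I would follow is the one developed in \citep{BaMo11}: show that once the state evolution has (approximately) converged, the AMP iterate $\beta^t$ satisfies the LASSO KKT conditions to arbitrary accuracy, that is, $X^* z^t$ serves as a valid subgradient of $\lambda\|\cdot\|_1$ evaluated at $\beta^t$ up to an error that vanishes as $t,p\to\infty$. One then invokes a restricted-strong-convexity property of the iid Gaussian design $X$, via large-deviation bounds on the minimum singular value of relevant submatrices, to convert the approximate-KKT proximity into the $\ell_2$ bound $p^{-1}\|\hat\beta^\lambda-\beta^t\|_2^2\to 0$ almost surely. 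The approximate-KKT estimate itself draws on the conditional representation of $\beta^t + X^* z^t$ developed in the proof of Theorem \ref{thm:ampeqpseudo_lip}, which exhibits it as $\beta_o + \sigma^t W + o(1)$ with the specific $W$ that also drives the limiting description of $\hat\beta^\lambda$; this is the mechanism by which the same $W$ ends up on both sides of the coupling above.
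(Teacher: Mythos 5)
The paper does not actually prove this statement: Theorem \ref{thm:equLASSOAMP} is imported from \citep{BaMo11} and used purely as background, so there is no in-paper proof to compare your attempt against. Judged on its own terms, your outline correctly identifies the two essential points: (i) that matching the marginal asymptotic MSEs of $\beta^t$ and $\hat{\beta}^\lambda$ via Theorems \ref{thm:ampeqpseudo_lip} and \ref{thm:eqpseudolip} says nothing about their pairwise distance, and (ii) that the actual mechanism in \citep{BaMo11} is to show that $\beta^t$ is an approximate stationary point of the LASSO cost --- at an AMP fixed point $(1-|I^t|/n)\,X^*z^t$ is, up to a vanishing error, $\lambda$ times a subgradient of $\|\cdot\|_1$ at $\beta^t$, which is exactly where the calibration $\lambda = \chi \hsig (1- \frac{1}{\delta}\mathbb{P} (|B+ \hsig W| \geq \chi \hsig ))$ comes from --- and then to convert approximate stationarity into $\ell_2$ proximity using lower bounds on singular values of submatrices of the Gaussian design. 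Two remarks. First, the ``joint pseudo-Lipschitz coupling'' on the triple $(\beta_o,\beta^t,\hat\beta^\lambda)$ that you interpose as the ``key step'' is an unnecessary detour: the structural approximate-KKT-plus-restricted-singular-value lemma delivers $p^{-1}\|\hat\beta^\lambda-\beta^t\|_2^2\to 0$ directly, and the joint coupling is a consequence of that conclusion rather than a stepping stone toward it; as stated it is essentially equivalent to the theorem, so presenting it as the pivot of the argument risks circularity. Second, all of the genuinely hard content --- the structural lemma itself, the control of the support sizes needed to invoke it, and the verification that the subgradient error vanishes along the AMP trajectory --- is deferred to \citep{BaMo11}, so what you have is an accurate roadmap of that external proof rather than a self-contained argument; this is consistent with how the paper itself treats the result.
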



\section{Main contributions}\label{sec:maincontribution}

\subsection{LASSO's solution path}\label{sec:lassopath}

We start by analyzing $\hat{\beta}^{\lambda}$ as $\lambda$ changes. The two main problems that we address are: (Q1)  How does $\frac{1}{p}\|\hat{\beta}^{\lambda}\|_0$ change as $\lambda$ varies? (Q2)  How does $\frac{1}{p}\|\hat{\beta}^\lambda  - \beta_o\|_2^2$ change as $\lambda$ varies?

The first question is about $\|\hat{\beta}^{\lambda}\|_0$, i.e., the number of active elements in the solution of the LASSO, and the second one is about the mean squared error (MSE). 
Intuitively speaking, one would expect the size of the active set to shrink as $\lambda$ increases and the mean squared error to be a bowl-shaped function of $\lambda$. Unfortunately the peculiar behavior of LASSO breaks this intuition. See Figure \ref{fig:activeset} for a counter-example. This figure exhibits the number of active elements in the solution as we increase the value of $\lambda$. It is clear that the size of the active set is not monotonically decreasing. The details of this simulation are described in Section \ref{sec:simulationdetails}.
\begin{figure}[h!]
\includegraphics[width= 6cm]{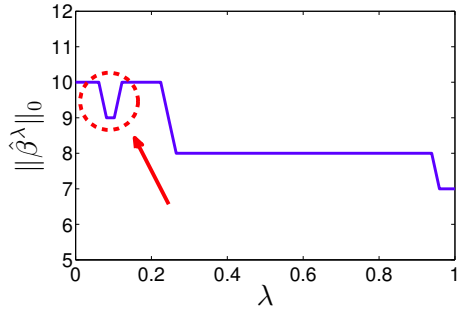}
\caption{The number of active elements in the LASSO's solution as a function of $\lambda$. The size of the active set at one location grows as we increase $\lambda$ and hence this function does not match the intuition. For the details of this experiment, see Section \ref{subsec:fig:activeset}. }
\label{fig:activeset}
\end{figure}

 Such pathological examples have discouraged further investigation of these problems in the literature. One of the main objectives of this paper is to show that such examples are quite rare, and if we consider the asymptotic setting (that will be described in Section \ref{sec:asympframework}), then we can provide quite intuitive answers to the two questions raised above. Let us summarize our results here in a non-rigorous way: (A1)  In the asymptotic setting, $\frac{1}{p}\|\hat{\beta}^{\lambda}\|_0$ is a decreasing function of $\lambda$. (A2)  In the asymptotic setting, $\frac{1}{p}\|\hat{\beta}^\lambda  - \beta_o\|_2^2$ is a quasi-convex function of $\lambda$. These results are formally stated below.

\begin{theorem}\label{lem:activeset}
Let  $\{\beta_o(p), X(p), w(p)\}$  denote a converging sequence of problem instances as defined in \ref{def:convseq}. Suppose that $X_{ij}\overset{iid}{\sim} N(0, 1/n)$. If $\hat{\beta}^{\lambda}(p)$ is the solution of LASSO with regularization parameter $\lambda$, then
\[
\frac{d}{d \lambda} \left(\lim_{p \rightarrow \infty} \frac{1}{p} \sum_i \mathbb{I} \left(\hat{\beta}^{\lambda}_i(p) \neq 0 \right) \right) <0. 
\]
Furthermore, $\lim_{p \rightarrow \infty} \frac{1}{p} \sum_i \mathbb{I} \left(\beta^{\lambda}_i(p) \neq 0 \right) \leq \delta$ no matter how we select $\lambda>0$.
\end{theorem}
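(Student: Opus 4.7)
My plan is to reduce both claims to properties of the explicit asymptotic formula provided by Remark~\ref{thm:lassodetstate},
\[
\lim_{p\rightarrow\infty}\tfrac{1}{p}\sum_i \mathbb{I}(\hat{\beta}^\lambda_i(p)\neq 0) \;=\; P(\chi,\hsig) \;:=\; \mathbb{P}(|B+\hsig W|>\chi\hsig),
\]
where $(\chi,\hsig)$ is the solution of the fixed-point system \eqref{eq:fixedpoint11}--\eqref{eq:fixedpoint21}. Both the monotonicity and the upper bound then become statements about how $P$ moves along the one-parameter fixed-point curve parametrized by $\lambda$.

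The upper bound $P\leq\delta$ is essentially immediate from \eqref{eq:fixedpoint21}: if $\lambda>0$ and $\chi,\hsig>0$, then the factor $(1-P/\delta)$ on the right-hand side must be strictly positive, forcing $P<\delta$. (The degenerate case $\hat{\beta}^\lambda\equiv 0$ gives $P=0$.) So the real work is the strict monotonicity.

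For monotonicity I would reparametrize the fixed-point curve by $\chi$ rather than by $\lambda$. Equation \eqref{eq:fixedpoint11} implicitly defines $\hsig=\hsig(\chi)$ via $\hsig^2=\sigma_\omega^2+R(\chi,\hsig)/\delta$ with $R(\chi,\hsig):=\mathbb{E}[(\eta(B+\hsig W;\chi\hsig)-B)^2]$. Existence, uniqueness, smoothness, and positivity of the denominator $2\hsig-\tfrac{1}{\delta}\partial_\hsig R$ all follow from the standard contraction/stability analysis of the AMP state evolution in \cite{BaMo10, DoMaMoNSPT}. Substituting $\hsig(\chi)$ into \eqref{eq:fixedpoint21} yields $\lambda(\chi)$, and along this curve $P=P(\chi,\hsig(\chi))$. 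It then suffices to show $d\lambda/d\chi>0$ and $dP/d\chi<0$, because the chain rule immediately gives $dP/d\lambda<0$.

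The bulk of the technical work is signing these two total derivatives. Writing $P=\mathbb{E}_B[\Phi(B/\hsig-\chi)+\Phi(-B/\hsig-\chi)]$ makes $\partial_\chi P<0$ manifest, and pairing the two terms at $\pm(\chi+B/\hsig)$ and using unimodality of $\phi$ yields $\partial_\hsig P<0$ as well. Combined with $(1-P/\delta)>0$, implicit differentiation of \eqref{eq:fixedpoint21} gives $d\lambda/d\chi>0$ without delicate wrangling. The main obstacle will be $dP/d\chi<0$: the chain rule produces a manifestly negative direct term plus a cross term $\partial_\hsig P\cdot \hsig'(\chi)$ whose sign depends on $\hsig'(\chi)=(\partial_\chi R/\delta)/(2\hsig-\partial_\hsig R/\delta)$. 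I would settle this by computing $\partial_\chi R$ and $\partial_\hsig R$ in closed form from the Gaussian soft-thresholding identities (the boundary terms at $\pm\chi\hsig$ cancel by continuity of $\eta$), and then verifying that regardless of the sign of $\hsig'$, the combination $\partial_\chi P + \partial_\hsig P\cdot\hsig'$ remains strictly negative. That one-line sign check, propped up by the SE stability inequality, is where the nontrivial calculation lives.
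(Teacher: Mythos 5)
Your overall architecture coincides with the paper's proof of Theorem \ref{lem:activeset}: both reduce the active-set fraction to $P \triangleq \mathbb{P}(|B/\hsig+Z|>\chi)$ via the state-evolution formula, obtain the bound $P\leq\delta$ from positivity of the factor $\bigl(1-\tfrac{1}{\delta}P\bigr)$ in \eqref{eq:fixedpoint21}, and prove monotonicity by the chain rule $\frac{dP}{d\lambda}=\frac{dP}{d\chi}\cdot\frac{d\chi}{d\lambda}$ after reparametrizing the fixed-point curve by $\chi$. Your treatment of $dP/d\chi<0$ is an honest sketch of the paper's Lemma \ref{lem:detovertau}: you correctly identify that the cross term $\partial_{\hsig} P\cdot\hsig'(\chi)$ can have the unfavorable sign when $\hsig'(\chi)<0$, and that the entire difficulty is showing the direct term dominates. (In the paper this is Step iv, the $\Lambda_1,\ldots,\Lambda_4$ decomposition plus a unimodality-of-$\phi$ pairing; it is considerably more than a ``one-line sign check,'' but you have located the crux correctly.)

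The genuine gap is in your dismissal of $d\lambda/d\chi>0$. Differentiating \eqref{eq:fixedpoint21} gives
\begin{equation*}
\frac{d\lambda}{d\chi}=\frac{d(\chi\hsig)}{d\chi}\left(1-\frac{1}{\delta}P\right)-\frac{\chi\hsig}{\delta}\frac{dP}{d\chi},
\end{equation*}
and while the second term is positive once $dP/d\chi<0$ is in hand, the first term is only controlled if $\frac{d(\chi\hsig)}{d\chi}=\hsig+\chi\hsig'(\chi)\geq 0$. This is not automatic: by your own formula $\hsig'(\chi)=(\partial_\chi R/\delta)/(2\hsig-\partial_{\hsig} R/\delta)$ is negative whenever $\chi$ lies below the risk-minimizing threshold ratio, so $\hsig+\chi\hsig'$ has no a priori sign. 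The paper's Lemma \ref{lem:lambdataumon} spends roughly a page on exactly this point: it substitutes the fixed-point identity \eqref{equ:FixedPointSimp} back into the expression for $\frac{d(\chi\hsig)}{d\chi}$ and shows the result equals $\Delta_1+\Delta_2$ with $\Delta_1>0$ (using $\lambda>0$, i.e., $P<\delta$) and $\Delta_2>0$ (another unimodality argument). Your claim that this step goes through ``without delicate wrangling'' is therefore where the proposal would stall if executed as written; you need the analogue of that computation, or some other argument for $\frac{d(\chi\hsig)}{d\chi}\geq 0$.
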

\noindent We summarize the proof of this Theorem in Section \ref{sec:proofactiveset}. \\

 Intuitively speaking, Theorem \ref{lem:activeset} claims that, as we increase the regularization parameter $\lambda$, the number of elements in the active set, i.e., $\|\hat{\beta}^{\lambda}\|_0$ is decreasing. Also, according to the condition $ \lim_{p \rightarrow \infty} \frac{1}{p} \sum_i \mathbb{I} \left(\beta^{\lambda}_i(p) \neq 0 \right)\allowbreak \leq \delta$, the largest it can get is $\delta = n/p$. Since the number of active elements is a decreasing function of $\lambda$, $\delta$ appears only in the limit $\lambda \rightarrow 0$. Intuitively speaking in the limit $\lambda \rightarrow 0$ the number of non-zero coefficients converges to the number of measurements.  Figure \ref{fig:LassoPathRandom} plots the number of active elements as a function of $\lambda$ for a setting described in Section \ref{subsec:fig:LassoPathRandom}. 
 
\begin{figure}[h!]
\includegraphics[width= 6cm]{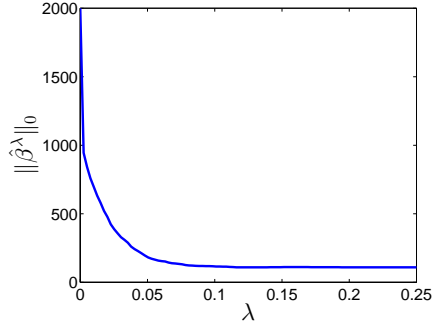}
\caption{The number of active elements in the solution of LASSO as a function of $\lambda$. The size of the active set decreases monotonically as we increase $\lambda$. See Section \ref{subsec:fig:LassoPathRandom} for the details.}
\label{fig:LassoPathRandom}
\end{figure}

Our next result is regarding the behavior of the normalized MSE in terms of the regularization parameter $\lambda$. In asymptotic setting, we prove that the normalized MSE is a quasi-convex function of $\lambda$. See Section 3.4 of \citep{BoydVanderberghe} for a short introduction on quasi-convex functions. Figure \ref{fig:MSE22} exhibits the behavior of MSE as a function of $\lambda$. The detailed description of this problem instance can be found in Section \ref{subsec:fig:2RiskFunctions}. Before we proceed further, we define bowl-shaped functions.

\begin{definition}\label{def:Qcvx}
A quasi-convex function $f:\mathbb{R} \rightarrow \mathbb{R}$  is called bowl-shaped if and only if there exists a unique and finite $x_0 \in \mathbb{R}$ at which $f$ achieves its minimum, i.e., $f(x_0) \leq f(x),~  \forall x \in \mathbb{R}$.
\end{definition}

Here is the formal statement of our result.

\begin{figure}
\includegraphics[width= 6cm]{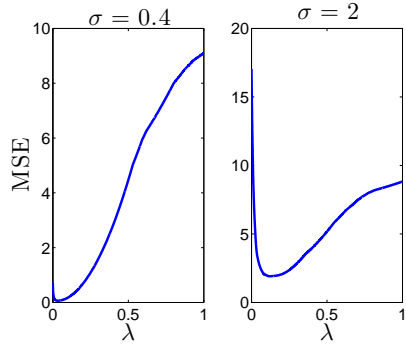}
\caption{Behavior of the MSE as a function of $\lambda$ of LASSO for two different noise variances. See Section \ref{subsec:fig:2RiskFunctions} for details.}
\label{fig:MSE22}
\end{figure}

\begin{theorem}\label{thm:quasiconvex}
Let  $\{\beta_o(p), X(p), w(p)\}$  denote a converging sequence of problem instances as defined in Definition \ref{def:convseq}. Suppose $X_{ij} \overset{iid}{\sim} N(0, 1/n)$. If $\hat{\beta}^{\lambda}(p)$ is the solution of LASSO with regularization parameter $\lambda$, then
 $\lim_{p \rightarrow \infty} \frac{1}{p}\| \hat{\beta}^{\lambda}(p) - \beta_o(p)\|_2^2$ is a quasi-convex function of $\lambda$. Furthermore, if $p_\beta(B=0) \neq 1$, then the function is bowl-shaped.
\end{theorem}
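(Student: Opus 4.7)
The plan is to reduce the question to an analysis of the scalar fixed-point equations \eqref{eq:fixedpoint11}–\eqref{eq:fixedpoint21} via Theorem \ref{thm:eqpseudolip}. Applying that theorem to the pseudo-Lipschitz function $\psi(u,v)=(u-v)^2$ (order $1$), the limiting MSE becomes
\[
M(\lambda) \;:=\; \lim_{p \to \infty} \tfrac{1}{p}\|\hat{\beta}^{\lambda}(p)-\beta_o(p)\|_2^2 \;=\; \mathbb{E}_{B,W}\bigl[(\eta(B+\hsig W;\chi\hsig)-B)^2\bigr],
\]
where $(\hsig,\chi)$ depend on $\lambda$ through \eqref{eq:fixedpoint11}–\eqref{eq:fixedpoint21}. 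Using \eqref{eq:fixedpoint11} to eliminate the expectation yields the much cleaner representation
\[
M(\lambda) \;=\; \delta\bigl(\hsig^2(\lambda)-\sigma_\omega^2\bigr),
\]
so it suffices to prove that $\hsig^2$ is quasi-convex (and, under the nondegeneracy hypothesis, bowl-shaped) as a function of $\lambda$.

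To exploit this, I would reparameterize: treat $\chi$ as the independent variable and define $\hsig(\chi)$ as the stable fixed point of the state-evolution recursion \eqref{eq:fixedpoint11} with threshold $\tau=\chi\hsig$, and $\lambda(\chi)$ via \eqref{eq:fixedpoint21}. Two monotonicity facts then need to be established:
\begin{itemize}
\item[(a)] The map $\chi \mapsto \lambda(\chi)$ is strictly increasing on the range of admissible $\chi$. This lets one pull back quasi-convexity in $\chi$ to quasi-convexity in $\lambda$.
\item[(b)] The map $\chi \mapsto \hsig^2(\chi)$ has derivative that vanishes at most once, and when it vanishes the second derivative is non-negative. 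Concretely, by implicit differentiation of \eqref{eq:fixedpoint11},
\[
\frac{d\hsig^2}{d\chi} \;=\; \frac{\tfrac{1}{\delta}\,\partial_\chi R(\sigma,\chi)\big|_{\sigma=\hsig}}{\,1-\tfrac{1}{\delta}\,\partial_{\sigma^2} R(\sigma,\chi)\big|_{\sigma=\hsig}\,},
\]
where $R(\sigma,\chi)=\mathbb{E}[(\eta(B+\sigma W;\chi\sigma)-B)^2]$; at a stable fixed point the denominator is positive (this is exactly the standard contraction condition for SE), so the sign of $d\hsig^2/d\chi$ is governed by $\partial_\chi R$.
\end{itemize}
I would then use Gaussian integration by parts (Stein's identity) to compute $\partial_\chi R$ explicitly. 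The resulting expression factors into a product whose sign-change structure in $\chi$ is easy to analyze: for small $\chi$ the thresholding is too permissive and the risk decreases in $\chi$, while for large $\chi$ it is too aggressive and the risk increases, with exactly one crossing. Combining with (a), $M$ is quasi-convex in $\lambda$.

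For bowl-shapedness under $p_\beta(B=0)\neq 1$, I would rule out the minimizer lying at the boundary. At the boundary $\lambda\to\infty$ (equivalently $\chi\to\infty$), $\hat{\beta}^{\lambda}\to 0$ and so $M\to \mathbb{E}[B^2]>0$; at the boundary $\lambda\to 0^+$, $M$ converges to the unregularized risk, which under the nondegeneracy hypothesis also exceeds the MMSE achievable at a finite, positive threshold (this is the classical statement that soft-thresholding with a well-chosen threshold strictly improves upon both zero and infinite thresholds when the prior is not a point mass at $0$). Therefore the unique critical point lies in the interior, giving the bowl-shaped property.

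The main obstacle I expect is step (a), the monotonicity of $\lambda(\chi)$, since $\hsig$ itself depends on $\chi$ and the two effects in $\lambda=\chi\hsig(1-\delta^{-1}\mathbb{P}(|B+\hsig W|>\chi\hsig))$ must be balanced. The cleanest route is to differentiate \eqref{eq:fixedpoint21} implicitly, substitute $d\hsig/d\chi$ from (b), simplify using \eqref{eq:fixedpoint11}, and show the resulting expression is strictly positive; Stein's identity reduces both $\partial_\sigma \mathbb{P}(|B+\sigma W|>\chi\sigma)$ and $\partial_\chi R$ to expectations of $\phi(\cdot)$-weighted quantities, which match up favorably in the algebra. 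A secondary subtlety is handling multiplicity of solutions of \eqref{eq:fixedpoint11}: one must select the AMP-relevant (stable) branch, as guaranteed by Theorem \ref{thm:equLASSOAMP}, and verify that this branch is well-defined and continuous throughout the range of $\chi$ under consideration.
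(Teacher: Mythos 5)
Your proposal follows essentially the same route as the paper's proof: reduce the limiting MSE to $\delta(\hsig^2-\sigma_\omega^2)$ via the state-evolution fixed point, pass to $\chi$ as the working variable, establish $d\lambda/d\chi>0$ (the paper's Lemma \ref{lem:lambdataumon}), and control the sign of $d\hsig^2/d\chi$ through the threshold-derivative of the soft-thresholding risk, which has a single negative-to-positive sign change (the paper's Lemma \ref{lem:quasiconvexsoft} together with Lemma \ref{cor:ZC} and Proposition \ref{pro:ZC} for the bowl shape). The only cosmetic difference is that you justify positivity of the denominator in $d\hsig^2/d\chi$ by a stability/contraction argument, whereas the paper obtains it from an explicit computation showing the denominator equals $\delta\sigma_\omega^2/\hsig^3+\bE\left[B^2\hsig^{-3}\bI\left(|B/\hsig+Z|<\chi\right)\right]>0$; the two hard computations you flag (monotonicity of $\lambda(\chi)$ and the sign-change analysis) are exactly the lemmas the paper proves in detail.
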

\noindent For the proof, See Section \ref{sec:proofquasiconvex}. 

This result can be employed to design efficient algorithms for tuning $\lambda$. However, since the approach is very similar to what we will explore for AMP, and optimizing the parameters of AMP leads to a solution that is equivalent to the solution of LASSO for optimal $\lambda$ (refer to Section \ref{ssec:connectLASSOAMPopt} for more information), for the sake of brevity we do not use Theorem \ref{thm:quasiconvex} for the optimal tuning of $\lambda$.

\subsection{Solution path and optimal tuning of AMP}\label{sec:CntrbAMP}
This section summarizes our contributions on the approximate message passing algorithm. 
\subsubsection{Solution path of AMP}
A major challenge in employing iterative thresholding algorithms, such as AMP, is the tuning of their free parameters. For instance, in \eqref{eq:ampeq1} $\tau^1, \tau^2, \ldots$ can be considered as free parameters of AMP that need to be tuned properly if an accurate estimate of $\beta_o$ is desired. These parameters have a major impact on both the final reconstruction error, $\lim_{t \rightarrow \infty} \|{\beta}^t- \beta_o\|_2^2/p$, and the convergence rate of the algorithm to its final solution. Ideally speaking, one would like to select the parameters in a way that the final reconstruction error is the smallest, and at the same time the algorithm converges to this solution at the fastest achievable rate. There are two main challenges here: (i) It is not clear if these two criteria can be satisfied at the same time. (ii) To obtain the minimum of $\lim_{p \rightarrow \infty} \|{\beta}^t- \beta_o\|_2^2/p$ we have to solve an optimization problem on $\tau^1, \tau^2, \ldots, \tau^t$ that is computationally demanding and statistically challenging.

To address these two challenges we study the solution path of AMP in terms of the parameters $\tau^1, \ldots, \tau^t$. Intuitively speaking, we will show that achieving the fastest convergence rate at every iteration is equivalent to achieving the minimum of $\lim_{t \rightarrow \infty} \lim_{p \rightarrow \infty} \|{\beta}^t- \beta_o\|_2^2/p$. Furthermore, we will prove that the optimization of $\tau^1, \tau^2, \ldots, \tau^t$ does not need to be done jointly. Below we formalize these statements.

We start with the definition of the optimal threshold parameters of AMP. We overload the notation $\sigma^t(\tau^1, \tau^2, \ldots, \tau^{t-1})$ to emphasize on the fact that the variance of the noise at iteration $t$ depends on all the parameters $\tau_1, \tau_2, \ldots, \tau_{t-1}$.
\begin{definition}\label{def:tau}
A sequence of threshold parameters $\tau^{*,1}, \tau^{*,2}, \ldots, \tau^{*,T-1}$ is called asymptotically optimal for iteration $T$, if and only if
\begin{align}
&\sigma^T(\tau^{*,1}, \ldots, \tau^{*,T-1}) \leq \sigma^{T} (\tau^1, \tau^2, \ldots, \tau^{T-1}), 
\ \ \  \forall \tau^1, \tau^2, \ldots, \tau^{T-1} \in [0, \infty)^{T-1} \nonumber .
\end{align}
\end{definition}
Note that in the above definition we have assumed that the optimal value of $\sigma^T$ is achieved by $ (\tau^{*,1}, \ldots, \allowbreak \tau^{*,T-1})$. This assumption is violated for the case $\beta_o =0$. While we can generalize the definition to include this case, for notational simplicity we skip this special case. The following two remarks clarify some of the main features of this definition:

\begin{remark}
According to Theorem \ref{thm:ampeqpseudo_lip} we have 
\[
\lim_{p \rightarrow \infty} \frac{1}{p} \|\beta^t- \beta_o\|_2^2 = \mathbb{E}_{B,W} [\eta(B+ \sigma^t W; \tau^t) -B]^2, 
\]
almost surely. Hence the optimal choice of parameters introduced in Definition \ref{def:tau}, also minimizes the asymptotic MSE. 
\end{remark}

\begin{remark}
According to Definition  \ref{def:tau}, it seems that in order to tune AMP optimally, we need to know the number of iterations $T$ we plan to run it  (that is usually not known in practice) and then perform a joint optimization over the parameters $\tau^1,\tau^2,\ldots,\tau^{T-1}$ (that is computationally infeasible). However, the properties of AMP's solution path resolve both issues. 
\end{remark}

\begin{theorem}\label{thm:stepwiseoptimal}
Let $\tau^{*,1}, \tau^{*,2}, \ldots, \tau^{*,T-1}$ be asymptotically optimal for iteration $T$. Then, $\tau^{*,1}, \tau^{*,2}$, $\ldots, \tau^{*,t-1}$ are asymptotically optimal for any iteration $t < T$. 
\end{theorem}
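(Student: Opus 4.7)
The plan is to reduce the claim to a one-dimensional statement about the state-evolution recursion and then argue by contradiction. Writing
\[
\Psi(\sigma,\tau) \;:=\; \sigma_{\omega}^{2} + \frac{1}{\delta}\,\mathbb{E}_{B,W}\!\left[\left(\eta(B+\sigma W;\tau)-B\right)^{2}\right],
\]
the SE \eqref{eq:ampevolution} reads $(\sigma^{s+1})^{2}=\Psi(\sigma^{s},\tau^{s})$, so that $\sigma^{T}(\tau^{1},\ldots,\tau^{T-1})$ is a deterministic function of the thresholds obtained by iterating this scalar map. In particular, once we agree on the value of $\sigma^{t}$, the remaining $\sigma^{T}$ depends only on $\tau^{t},\ldots,\tau^{T-1}$.

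The heart of the proof is the following monotonicity lemma, which I would isolate and verify first: for every fixed $\tau\in[0,\infty)$ and every nontrivial prior $p_{\beta}\neq \delta_{0}$, the map $\sigma\mapsto \Psi(\sigma,\tau)$ is \emph{strictly increasing} on $[0,\infty)$. This is a standard fact in the AMP literature; I would verify it by differentiating under the expectation and using Stein's identity applied to $W\sim N(0,1)$, noting that $\eta'(x;\tau)=\mathbb{I}(|x|>\tau)$. Because $\eta'$ is discontinuous, one has to be a little careful at $x=\pm\tau$; this is handled either by mollifying $\eta$ and passing to the limit, or by writing the derivative as a sum of a strictly positive SURE-type term and boundary contributions that vanish for a.e.\ $\sigma$. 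The assumption $p_{\beta}\neq \delta_{0}$ guarantees that the derivative is strictly positive rather than merely nonnegative.

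With strict monotonicity in hand, suppose toward a contradiction that for some $t<T$ the prefix $(\tau^{*,1},\ldots,\tau^{*,t-1})$ fails to be asymptotically optimal for iteration $t$. Then there exists $(\tilde\tau^{1},\ldots,\tilde\tau^{t-1})$ with
\[
\tilde\sigma^{t}\;:=\;\sigma^{t}(\tilde\tau^{1},\ldots,\tilde\tau^{t-1})\;<\;\sigma^{t}(\tau^{*,1},\ldots,\tau^{*,t-1})\;=:\;\sigma^{*,t}.
\]
Now run both scalar trajectories forward from step $t$ to step $T$ using the \emph{same} tail thresholds $\tau^{*,t},\tau^{*,t+1},\ldots,\tau^{*,T-1}$. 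Applying the monotonicity lemma one iteration at a time,
\[
\tilde\sigma^{s+1}=\sqrt{\Psi(\tilde\sigma^{s},\tau^{*,s})}\;<\;\sqrt{\Psi(\sigma^{*,s},\tau^{*,s})}=\sigma^{*,s+1},\qquad s=t,\ldots,T-1,
\]
so that $\sigma^{T}(\tilde\tau^{1},\ldots,\tilde\tau^{t-1},\tau^{*,t},\ldots,\tau^{*,T-1})<\sigma^{T}(\tau^{*,1},\ldots,\tau^{*,T-1})$. This contradicts the assumed asymptotic optimality of $(\tau^{*,1},\ldots,\tau^{*,T-1})$ for iteration $T$, and hence proves the theorem.

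The genuinely substantive step is the strict monotonicity lemma in Step~2; the weak inequality $\tilde\sigma^{T}\le\sigma^{*,T}$ alone would not yield a contradiction, because Definition~\ref{def:tau} permits multiple minimizers, and one cannot rule out a priori that $(\tilde\tau^{1},\ldots,\tilde\tau^{t-1},\tau^{*,t},\ldots,\tau^{*,T-1})$ ties with $(\tau^{*,1},\ldots,\tau^{*,T-1})$ without invoking strictness. Once strict monotonicity is established, the inductive propagation in Step~3 is immediate and does not require re-tuning the tail thresholds.
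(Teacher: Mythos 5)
Your reduction to the state-evolution recursion and the contradiction scheme are the right skeleton, but the lemma on which everything rests is false: for a \emph{fixed} threshold $\tau$, the map $\sigma \mapsto \Psi(\sigma,\tau)$ need not be increasing. Take $B=\tau$ deterministically (a point mass at the threshold value, a perfectly admissible nontrivial prior). At $\sigma=0$ the risk term is $\mathbb{E}[(\eta(\tau;\tau)-\tau)^2]=\tau^2$, while for small $\sigma>0$ a direct computation gives
\[
\mathbb{E}\bigl[(\eta(\tau+\sigma W;\tau)-\tau)^2\bigr] \;=\; \tau^2 \;-\; \frac{2\tau\sigma}{\sqrt{2\pi}} \;+\; O(\sigma^2),
\]
so the risk is strictly \emph{decreasing} near $\sigma=0$. (Carrying out the Stein computation you sketch, the derivative in $\sigma$ equals $2\sigma\,\mathbb{P}(|B+\sigma W|>\tau)-2\,\mathbb{E}\bigl[B\bigl(\phi(\tfrac{\tau-B}{\sigma})-\phi(\tfrac{\tau+B}{\sigma})\bigr)\bigr]$; the boundary contributions you dismiss do not vanish, and they are exactly what makes this negative here.) What \emph{is} standard in the AMP literature, and what Lemma \ref{lem:convaceRisk} asserts, is monotonicity/concavity in $\sigma^2$ for the \emph{proportional} threshold $\tau=\chi\sigma$ --- a different statement. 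Consequently the forward propagation $\tilde\sigma^{s+1}=\sqrt{\Psi(\tilde\sigma^{s},\tau^{*,s})}<\sqrt{\Psi(\sigma^{*,s},\tau^{*,s})}$ with the \emph{same} tail thresholds does not go through, and your closing remark that the inductive step ``does not require re-tuning the tail thresholds'' points at precisely the place where the proof breaks.

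The paper's proof repairs exactly this point. Its key lemma (Lemma \ref{lem:inftau}) is that the \emph{optimized} risk $\inf_{\tau} R_{B}(\sigma,\tau;p_\beta)$ is strictly increasing in $\sigma$, proved by passing to the proportional parametrization $\tau=\chi\sigma$, combining concavity of the risk in $\sigma^2$ with positivity of its derivative as $\sigma\rightarrow\infty$, and then taking the infimum carefully (bowl-shapedness from Lemma \ref{lem:quasiconvexsoft} supplies the strictness you correctly identify as necessary). With that in hand one does not push $\tau^{*,t},\ldots,\tau^{*,T-1}$ forward unchanged: given $\tilde\sigma^{T-1}<\sigma^{*,T-1}$ one \emph{re-tunes} the next threshold to $\tau^{**,T-1}=\arg\min_{\tau}\mathbb{E}[(\eta(B+\tilde\sigma^{T-1}W;\tau)-B)^2]$ and concludes from \eqref{eq:ampevolution} and Lemma \ref{lem:inftau} that $\sigma^{T}(\tau^{1},\ldots,\tau^{T-2},\tau^{**,T-1})<\sigma^{T}(\tau^{*,1},\ldots,\tau^{*,T-1})$, contradicting optimality at iteration $T$; the general $t<T-1$ case iterates this re-tuning. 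If you replace your fixed-threshold monotonicity by the optimized-risk monotonicity and insert the re-tuning step, your argument becomes the paper's.
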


 See the proof of this Theorem in Section \ref{proof:greedisgood}. A special case of the above theorem implies that $\tau^{*,1}$ must be optimal for the first iteration. Once $\tau^{*,1}$ is calculated we calculate $\tilde{\beta}^2=\beta^2+X^*z^2$ and again from the above theorem we know that $\tau^{*,2}$ should be optimal for  this step. The same strategy might be used for the other iterations as well. An interesting implication of this result is that the sequence $\tau^{*,1}, \tau^{*,2}, \ldots$ not only achieves the minimum MSE as $t \rightarrow \infty$, but also achieves the fastest convergence rate toward the final solution. 
 
Note that Theorem \ref{thm:stepwiseoptimal} simplifies the problem of tuning all the threshold parameters to the problem of tuning $\tau^t$ at iteration $t$. In other words, as will be clear from the proof of Theorem \ref{thm:stepwiseoptimal} the optimal choice of $\tau^t$ minimizes 
\begin{align}\label{equ:BayesRisk}
R_{B} (\sigma^t, \tau^t ; p_{\beta}) \triangleq \mathbb{E} (\eta(B_o+ \sigma^t W ; \tau^t) - B_o)^2,
\end{align}

Despite the success of Theorem \ref{thm:stepwiseoptimal} in reducing the computational complexity of the optimal tuning of $\tau^1, \ldots, \tau^t$, two major challenges still remain in solving \eqref{equ:BayesRisk}: (i) The distribution of $B_o$ is not known, and hence  $\mathbb{E} (\eta(B_o+ \sigma^t W ; \tau^t) - B_o)^2$ must be estimated. (ii) Even if the distribution of $B$ is known, finding the optimal value of $\tau^t$ may require a search over a grid of values of $\tau$ that is computationally demanding. For the moment we assume that $\mathbb{E} (\eta(B_o+ \sigma^t W ; \tau^t) - B_o)^2$  is known. In the next section we show an asymptotically consistent estimate of this quantity. 

 \begin{lemma}\label{lem:amp:quasi}
If $\mathbb{P} (B = 0) <1$, then $R_{B} (\sigma^t, \tau^t ; p_{\beta})$ is a bowl-shaped function of $\tau^t$. 
\end{lemma}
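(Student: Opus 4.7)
Fix $\sigma := \sigma^t$ and set $R(\tau) := R_B(\sigma,\tau;p_\beta) = \mathbb{E}[(\eta(B+\sigma W;\tau)-B)^2]$. The plan is to compute $R'(\tau)$ in closed form, factor it as a strictly positive function times $(g(\tau)-\sigma^2)$ for a scalar function $g$, and then show $g$ is strictly increasing and crosses the level $\sigma^2$ exactly once in $(0,\infty)$. Since $p_\beta$ has bounded second moment and $W$ is Gaussian, $X := B+\sigma W$ admits a smooth, strictly positive density $f_X$ (the Gaussian convolution of $p_\beta$), and dominated convergence yields $R\in C^1(0,\infty)$.

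To obtain $R'$, I would split $R(\tau) = \mathbb{E}[B^2\mathbb{I}(|X|\leq\tau)] + \mathbb{E}[(\sigma W - \tau\,\mathrm{sign}(X))^2\mathbb{I}(|X|>\tau)]$. The two integrands match at the boundary $|X|=\tau$ (both equal $B^2$, since $\sigma W = X-B$ forces $\sigma W - \tau\,\mathrm{sign}(X) = -B$ there), so differentiation under the expectation produces no boundary terms. Applying Gaussian integration by parts (Stein's lemma) to eliminate the surviving $W$ factor then gives
\[
R'(\tau) \;=\; 2\tau\,\mathbb{P}(|X|>\tau) - 2\sigma^2\bigl[f_X(\tau)+f_X(-\tau)\bigr] \;=\; 2\bigl[f_X(\tau)+f_X(-\tau)\bigr]\bigl(g(\tau)-\sigma^2\bigr),
\]
where $g(\tau):=\tau\,\mathbb{P}(|X|>\tau)/[f_X(\tau)+f_X(-\tau)]$. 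Since the bracketed factor is strictly positive, the sign of $R'$ coincides with the sign of $g(\tau)-\sigma^2$, and the problem reduces to the behavior of the scalar function $g$.

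The remaining task is to establish (i) $g(0)=0$, (ii) $g$ is strictly increasing on $(0,\infty)$, and (iii) $g(\tau)>\sigma^2$ for all sufficiently large $\tau$, the last of which is where the nondegeneracy $\mathbb{P}(B=0)<1$ enters. Item (i) is immediate. For (iii), I would use the Gaussian-mixture representations $f_X(\pm\tau)=\mathbb{E}_B[\sigma^{-1}\phi((\pm\tau-B)/\sigma)]$ and $\mathbb{P}(|X|>\tau)=\mathbb{E}_B[(1-\Phi((\tau-B)/\sigma))+(1-\Phi((\tau+B)/\sigma))]$, and a Mills-ratio expansion on the dominant component gives $g(\tau)\sim \sigma^2\cdot\tau/(\tau-B_\star)$ for an effective location $B_\star>0$ determined by the mass of $p_\beta$ away from the origin, which exceeds $\sigma^2$ for large $\tau$. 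The main obstacle is (ii): transferring monotonicity from each conditional Gaussian $X\mid B=b$ to the full mixture. My approach is to differentiate $g$ via the mixture and show $g'(\tau)$ has the sign of a covariance between two $|B|$-monotone functionals of the Gaussian kernel, then apply an FKG-type inequality with strict positivity supplied by nondegeneracy of $p_\beta$; failing that, a direct calculus reduction to the Komatsu form of Mills's inequality $M(u)(1+u^2)>u$, applied conditionally on $B$ and then integrated, should also close the gap. Granted (i)--(iii), the intermediate value theorem produces a unique $\tau^\star\in(0,\infty)$ with $g(\tau^\star)=\sigma^2$, $R'<0$ on $(0,\tau^\star)$ and $R'>0$ on $(\tau^\star,\infty)$, so $R$ attains its unique finite minimum at $\tau^\star$, which is exactly the bowl-shaped property of Definition~\ref{def:Qcvx}.
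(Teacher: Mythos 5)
Your formula for the derivative is correct and agrees with the paper's starting point (the computation in Lemma \ref{lem:quasiconvexsoft}, equation \eqref{equ:DerRisk}, after rescaling by $\sigma$): one does get $R'(\tau)=2\tau\,\mathbb{P}(|X|>\tau)-2\sigma^2\left[f_X(\tau)+f_X(-\tau)\right]$, and your boundary-cancellation and Stein's-lemma steps are sound. The genuine gap is step (ii), which you leave unproven and which, as stated, is \emph{false}: $g(\tau)=\tau\,\mathbb{P}(|X|>\tau)/\left[f_X(\tau)+f_X(-\tau)\right]$ is not increasing for general $p_\beta$. Take $\sigma=1$ and $B=0$ with probability $0.9$, $B=40$ with probability $0.1$. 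At $\tau=20$ the survival probability $\mathbb{P}(|X|>20)\approx 0.1$ is carried entirely by the far component, while the density $f_X(20)+f_X(-20)\approx 1.9\,\phi(20)$ is still carried by the component at the origin, so $g(20)$ is of order ${\rm e}^{200}$; at $\tau=40$ the far component dominates both numerator and denominator and $g(40)\approx 40\cdot 0.05/(0.1\,\phi(0))\approx 50$. So $g$ collapses by hundreds of orders of magnitude between $\tau=20$ and $\tau=40$ (this does not contradict the lemma: both values exceed $\sigma^2=1$, so $R'>0$ throughout that interval; only your intermediate monotonicity claim fails). Note that $\tau/g(\tau)$ is the hazard rate of $|X|$, and hazard-rate monotonicity is notoriously not preserved under mixing; for the same reason, neither the FKG route nor ``apply the Mills-ratio inequality conditionally on $B$ and integrate'' can close the gap, since the statement you need is not an average of the conditional statements.

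The paper's proof avoids this by normalizing $\partial_\tau r$ not by the mixture density but by a positive quantity that does not depend on $p_\beta$, namely $\left|\partial_\tau r(\tau,\delta_0)\right|$, the magnitude of the derivative of the risk for the point mass at zero. After the change of variables in \eqref{equ:DerRiskMod}, the resulting ratio $V(\tau;G)$ equals $\frac{1}{2}\,\mathbb{E}_{\mu}\left[\int_{-\mu}^{\mu}|w|\phi(w-\tau)\,dw\right]\big/\int_{-\infty}^{0}|w|\phi(w-\tau)\,dw-1$, and a direct quotient-rule computation shows this is strictly increasing; combined with $V(0;G)<0$ (Lemma \ref{cor:ZC}) and positivity of $\partial_\tau r$ for large $\tau$ under $\mathbb{P}(B=0)<1$ (Proposition \ref{pro:ZC}, which plays the role of your step (iii)), this yields exactly one sign change of $R'$ and hence the bowl shape. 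If you want to keep your framework, replace your $g$ by $R'(\tau)$ divided by a $p_\beta$-independent normalizer such as $\phi(\tau/\sigma)-(\tau/\sigma)(1-\Phi(\tau/\sigma))$ and prove monotonicity of that ratio instead; as written, your reduction targets a false statement and the proof does not go through.
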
 
This result is similar to Lemma \ref{lem:uniquefixedpointconc} that is proved in Section \ref{sec:Thms}. Hence, we skip the proof. 

As is suggested by the above theorem the risk function that should be minimized is bowl-shaped. As will be clear from the proof, the derivative of $R_{B} (\sigma^t, \tau^t ; p_{\beta})$ is only zero at the optimal value of $\tau^t$. You may see an example of $R_{B} (\sigma^t, \tau^t ; p_{\beta})$ in Figure \ref{fig:diff_est_real}.
 Hence, instead of finding the optimal value of $\tau$ by a grid search we may use faster convex optimization algorithms such as bisection or gradient descent. We will clarify this point in the next section. 

In this section we assumed that $\lim_{p \rightarrow \infty} \frac{1}{p} \|\beta^t- \beta_o\|_2^2$ is given, which is not the case in practice. Next section presents an asymptotically accurate estimate of this quantity and demonstrates the performance of the bisection method on the risk estimate. 

\subsubsection{Stein Unbiased Risk Estimate and Optimal Tuning of AMP}\label{ssec:sureestimate}

One of the issues we raised in the previous section regarding the optimal tuning of AMP algorithm was the fact that $R_{B} (\sigma^t, \tau^t ; p_{\beta}) $ is not given in practice and hence has to be estimated. In this section, we first present an estimate of $R_{B} (\sigma^t, \tau^t ; p_{\beta})$ and will then describe the modifications we have to make to the tuning scheme proposed in the last section.  We consider the following empirical risk estimate of AMP at iteration $t$:
\begin{align}\label{eq:empriskdefamp}
{\hat{R}^t_{h,p} (\tau^t, \tau^{t-1}, \ldots, \tau^1)}&\triangleq\frac{1}{p} \|\tilde{\eta}_h(\beta^t+X^*z^t;\tau^t)-(\beta^t+X^*z^t)\|_2^2+\left(\sigma^t\right)^2\nonumber \\
&~~+\frac{2}{p}\left(\sigma^t\right)^2\left[ \mathbf{1}^T(\tilde{\eta}_h'(\beta^t+X^*z^t;\tau^t)-\mathbf{1})\right],
\end{align}
where $\tilde{\eta}_h(u; \tau)= \eta(u; \tau)* \frac{1}{\sqrt{2 \pi} h} {\rm e}^{-\frac{u^2}{2 h^2}}$, with $*$ and $h$ denoting the convolution operator and a small number, respectively. The role of this convolution is to smooth out the soft thresholding function at the threshold. We would like to emphasize three aspects of this risk estimate:
\begin{enumerate}
\item The dependence of $\hat{R}^t_{h,p} $ on $\tau^1, \tau^2, \ldots, \tau^{t-1}$ might not be clear from the expression we have written in \eqref{eq:empriskdefamp}. However, $\beta^t, z^t$ and $\sigma^t$ depend on $\tau^1, \tau^2, \ldots ,\tau^{t-1}$. 

\item As will be clarified later in the proof, this estimate is inspired by the Stein Unbiased Risk Estimate (SURE). Since SURE can be applied to any weakly differentiable function, the introduction of the smoothing kernel $\frac{1}{\sqrt{2 \pi} h} {\rm e}^{-\frac{u^2}{2 h^2}}$ seems to be unnecessary. Our simulation results seem to agree with this observation too. However, we require it for proving  $\mathbb{P} (\sup_{\tau^t}|\hat{R}_{h,p}^t(\tau^t, \ldots, \tau^1) - \mathbb{R}_B(\sigma^t,\tau^t; p_\beta)  | > \epsilon) \rightarrow 0$. This uniform convergence of the risk estimate is the base of the tuning approach we propose in this section and will be proved in Section \ref{sec:proofuniform} (Theorem \ref{thm:amptune1} ). Hence, the introduction of $h$ might be unnecessary and an artifact of our proof technique.

\item Note that in this estimate $(\sigma^t)^2$ is employed despite the fact that it is not known in practice. It is straightforward to use Lemma 1 of \cite{BaMo10} to show that $\frac{1}{n} (z^t)^T z^t \rightarrow (\sigma^t)^2$, almost surely.\footnote{This estimate has been introduced elsewhere \cite{MalekiThesis}. } Hence, we can replace $(\sigma^t)^2$ in \eqref{eq:empriskdefamp} with  $\frac{1}{n}(z^t)^T z^t$ and all the discussions of this section will be still valid. However, to save some space and simplify the notation we assume that $\sigma^t$ is given.

\end{enumerate}

Let $\mathcal{T}^1, \mathcal{T}^2, \ldots$ denote some known compact subsets of $\mathbb{R}$ such that $\tau^{*,i} \in \mathcal{T}^i$. Combining Theorem \ref{thm:stepwiseoptimal} and the risk estimate, ${\hat{R}^t_{h,p} (\tau^t, \tau^{t-1}, \ldots, \tau^1)}$,  we obtain the following algorithm for tuning the parameters of AMP:

\begin{enumerate}
\item[(i)] Let $\hat{\tau}^1_{p,h} = \arg \min_{\tau^1 \in \mathcal{T}^1} {\hat{R}^1_{h,p} (\tau^1)} $.
\item[(ii)] Fix, $\tau^1, \tau^2, \ldots, \tau^{t-1}$ to $\hat{\tau}^1_{p,h}, \hat{\tau}^2_{p,h}, \ldots, \hat{\tau}^{t-1}_{p,h}$, and calculate $\beta^t$, $z^t$,  \newline ${\hat{R}^t_{h,p} (\tau^t,  \hat{\tau}^{t-1}_{p,h}, \ldots, \hat{\tau}^1_{p,h})}$, and  
\begin{equation}\label{eq:tuningitt1}
\hat{\tau}^t_{p,h} \triangleq \arg \min_{\tau^t \in \mathcal{T}^t} {\hat{R}^t_{h,p} (\tau^t,  \hat{\tau}^{t-1}_{p,h}, \ldots, \hat{\tau}^1_{p,h})}.
\end{equation}
\end{enumerate}
Note that the only extra calculation required for the optimal tuning is the univariate optimization \eqref{eq:tuningitt1} at each iteration. For now, we suppose that we can solve this univariate optimization problem accurately. Under this assumption, the following Theorem proves the consistency of $\hat{\tau}_{p,h}$. 

\begin{theorem} \label{cor:empconvopt}
Consider the converging sequence $\{\beta_o(p), X(p), w(p)\}$, and let the elements of $X$ be drawn iid from $N(0,1/n)$.
Let $ \tau^{*,t}$ denote the optimal threshold according to Definition \ref{def:tau}. Then, for any fixed iteration $t$
\[
\lim_{h \rightarrow 0} \lim_{p \rightarrow \infty } \hat{\tau}^t_{p,h}= \tau^{*,t}
\]
in probability.
\end{theorem}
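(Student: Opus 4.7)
The plan is to prove consistency by induction on $t$, using a standard argmin-consistency argument for M-estimators: establish (a) that the limiting risk $\tau \mapsto R_B(\sigma^t, \tau; p_\beta)$ has a unique minimizer, namely $\tau^{*,t}$, and (b) that $\hat R^t_{h,p}(\tau, \hat\tau^{t-1}_{p,h},\ldots,\hat\tau^1_{p,h})$ converges to $R_B(\sigma^t, \tau; p_\beta)$ uniformly in $\tau \in \mathcal T^t$ (in probability), as first $p\to\infty$ and then $h\to 0$. Given (a) and (b), a routine M-estimator argument (if $f_n \to f$ uniformly, $f$ is continuous with unique minimizer $\tau^*$ on a compact set, and $\hat\tau_n$ minimizes $f_n$, then $\hat\tau_n \to \tau^*$) delivers $\hat\tau^t_{p,h} \overset{p}{\to} \tau^{*,t}$.

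For (a), Lemma~\ref{lem:amp:quasi} already tells us that $R_B(\sigma^t, \tau; p_\beta)$ is bowl-shaped in $\tau$ whenever $\mathbb P(B=0)<1$, so it has a unique minimizer on $\mathcal T^t$. The value of $\sigma^t$ entering this risk is the one produced by the previous optimal thresholds; by the induction hypothesis we already have $\hat\tau^i_{p,h} \overset{p}{\to} \tau^{*,i}$ for $i<t$, and by Theorem~\ref{thm:ampeqpseudo_lip} the SE recursion \eqref{eq:ampevolution} is a continuous function of its threshold inputs, so $\sigma^t(\hat\tau^{t-1}_{p,h},\ldots,\hat\tau^1_{p,h}) \overset{p}{\to} \sigma^t(\tau^{*,t-1},\ldots,\tau^{*,1})$. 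Thus the limiting object whose minimizer we wish to recover is exactly $R_B(\sigma^t,\cdot;p_\beta)$ with $\sigma^t$ evaluated along the optimal path.

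For (b), the key input is Theorem~\ref{thm:amptune1}, which the authors have flagged as the uniform-convergence statement $\mathbb P(\sup_{\tau^t}|\hat R^t_{h,p}(\tau^t,\ldots,\tau^1)-R_B(\sigma^t,\tau^t;p_\beta)|>\epsilon)\to 0$. The proof of Theorem~\ref{cor:empconvopt} then reduces to assembling this uniform convergence (for fixed, deterministic preceding thresholds) with the induction hypothesis that the plugged-in preceding thresholds are themselves consistent. This is handled by a standard continuous-mapping / uniform-in-plug-in argument: because $\hat R^t_{h,p}$ depends on $(\tau^1,\ldots,\tau^{t-1})$ only through the AMP iterates $\beta^t,z^t,\sigma^t$, and those iterates depend continuously on $(\tau^1,\ldots,\tau^{t-1})$ at the population level (via state evolution), one can sandwich $\hat R^t_{h,p}(\cdot,\hat\tau^{t-1}_{p,h},\ldots,\hat\tau^1_{p,h})$ between values at deterministic neighborhoods of $(\tau^{*,t-1},\ldots,\tau^{*,1})$ and apply Theorem~\ref{thm:amptune1} to each. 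The role of the smoothing parameter $h>0$ is exactly to put $\tilde\eta_h$ into the regime where the SURE calculation (Stein's identity) is valid pointwise, and letting $h\to 0$ after $p\to\infty$ recovers the unsmoothed soft threshold risk.

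The main obstacle is step (b), specifically controlling the uniform convergence when the threshold parameters $\tau^1,\ldots,\tau^{t-1}$ are themselves random plug-ins of consistent estimates rather than fixed constants. The proof must invoke continuity of the SE map together with the compactness of each $\mathcal T^i$ to upgrade a pointwise SURE-type unbiasedness into a uniform-in-$\tau^t$ and stable-under-plug-in statement; this is where the smoothing kernel $\tilde\eta_h$ does real work, since without it the indicator-like derivative of the soft threshold makes the empirical risk discontinuous in $\tau^t$ and ruins uniform convergence. Once Theorem~\ref{thm:amptune1} is granted, the remaining induction and argmin-consistency step is routine.
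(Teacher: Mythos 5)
Your proposal follows essentially the same route as the paper: uniform convergence of the SURE-based estimate to $R_B(\sigma^t,\tau^t;p_\beta)$ (Theorem \ref{thm:amptune1}), uniqueness of the minimizer from the bowl-shaped risk, identification of the stepwise target with $\tau^{*,t}$ via Theorem \ref{thm:stepwiseoptimal}, and an induction that handles the random plug-in thresholds through continuity of the empirical risk in the preceding $\tau$'s (the paper does this with a mean-value-theorem bound $|\hat R^2_{h,p}(\tau^2,\hat\tau^1_{p,h})-\hat R^2_{h,p}(\tau^2,\tau^{*,1})|\le C|\hat\tau^1_{p,h}-\tau^{*,1}|$ rather than your neighborhood sandwich, but the idea is the same). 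The argument is correct and matches the paper's proof in all essential respects.
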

 The proof of this result is summarized in Section \ref{sec:proofuniform}. 
\begin{figure}
\centering
\includegraphics[width= 7cm]{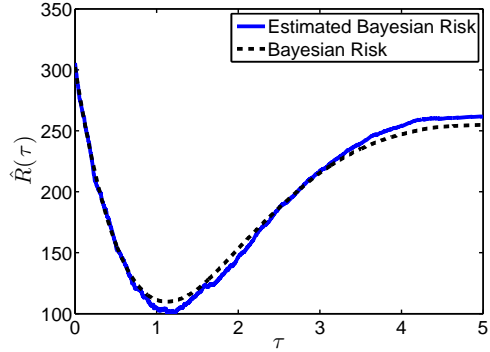}
\caption{The dashed black curve denotes the risk function corresponding to noiseless measurements and the solid blue curve indicates its estimate. For the simulation details, refer to the supplementary file.} 
\label{fig:diff_est_real}
\end{figure}
 The tuning algorithm we described above can be implemented in practice, but requires an exhaustive search over each $\mathcal{T}^t$. As we described in the previous section, since $R_{B} (\sigma^t, \tau^t ; p_{\beta})$ is quasi-convex function of $\tau^t$ we can employ bisection method or gradient descent to reduce the computations further. However, the algorithm has to work with the risk estimate ${\hat{R}^t_{h,p} (\tau^t,  \hat{\tau}^{t-1}_{p,h}, \ldots, \hat{\tau}^1_{p,h})}$ that is not necessarily quasi-convex. Hence, the last challenge is to modify these algorithms in a way that they can work on ${\hat{R}^t_{h,p} (\tau^t,  \hat{\tau}^{t-1}_{p,h}, \ldots, \hat{\tau}^1_{p,h})}$. Here we present an approximate bisection algorithm, but the interested reader may also see the performance of an approximate gradient descent algorithm in our unpublished report \cite{mousavi2013parameterless}.   
 
  We assume that  $\mathcal{T}^t= [\underline{\tau}^t ,\overline{\tau}^t]$. We select two small numbers $\varepsilon$ and $\Delta$, set $\tau = (\underline{\tau}+ \overline{\tau})/2$, and do the following: If $(\hat{R}^t_{p,h}(\tau+\Delta) - \hat{R}^t_{p,h} (\tau))/\Delta < - \varepsilon$, then set $\underline{\tau}^t = \tau$ and repeat the process. If $(\hat{R}^t_{p,h}(\tau+\Delta) - \hat{R}^t_{p,h} (\tau))/\Delta > \varepsilon$, then set $\overline{\tau}^t = \tau$ and repeat the process. Otherwise, stop the process and return $\tau$. This is a slight modification of the bisection method that is popular in optimization. We can analyze the performance of this algorithm under the asymptotic settings.

\begin{theorem}
Consider the converging sequence $\{\beta_o(p), X(p), w(p)\}$, and let the elements of $X$ be drawn iid from $N(0,1/n)$.
Let $\hat{\tau}^t_{B,p}$ denote the value of $\tau$ at which our bisection algorithm stops. Then, there exists $\bar{\tau} \in [\hat{\tau}^t_{B,p}, \hat{\tau}^t_{B,p}+ \Delta] $ such that with probability one
\begin{equation*}\label{eq:fixedpointbisec}
\lim_{h \rightarrow 0} \lim_{p \rightarrow \infty} \left|\frac{ \partial R_B(\sigma, \bar{\tau}; p_\beta)}{\partial \tau} \right| < \epsilon.
\end{equation*}
\end{theorem}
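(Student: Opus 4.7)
The plan is to combine two ingredients: (i) the uniform convergence of the empirical risk estimate $\hat{R}^t_{p,h}$ to the true asymptotic risk $R_B(\sigma,\tau;p_\beta)$ that was established in Theorem \ref{thm:amptune1} (invoked in the previous section), and (ii) the mean value theorem applied to $R_B(\sigma,\cdot;p_\beta)$, which is smooth in $\tau$ because the expectation is over an absolutely continuous Gaussian distribution.

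First, I would translate the stopping rule into a statement about the finite-difference slope of $\hat{R}^t_{p,h}$. Termination on the compact interval $[\underline{\tau},\overline{\tau}]$ is automatic since its length is halved at every non-stopping step. When the algorithm halts at $\hat{\tau}:=\hat{\tau}^t_{B,p}$, neither of the two shrink rules fired, hence
\[
\left|\frac{\hat{R}^t_{p,h}(\hat{\tau}+\Delta)-\hat{R}^t_{p,h}(\hat{\tau})}{\Delta}\right| \leq \varepsilon.
\]
Next, I would use uniform convergence. By Theorem \ref{thm:amptune1}, for every $\varepsilon'>0$,
\[
\lim_{h\to 0}\lim_{p\to\infty}\mathbb{P}\Bigl(\sup_{\tau\in\mathcal{T}^t}\bigl|\hat{R}^t_{p,h}(\tau)-R_B(\sigma,\tau;p_\beta)\bigr|>\varepsilon'\Bigr)=0.
\]
Consequently, in the iterated limit the finite difference of $\hat{R}^t_{p,h}$ at $\hat{\tau}$ differs from the finite difference of $R_B$ at $\hat{\tau}$ by at most $2\varepsilon'/\Delta$, almost surely. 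Thus, with probability one,
\[
\lim_{h\to 0}\lim_{p\to\infty}\left|\frac{R_B(\sigma,\hat{\tau}+\Delta;p_\beta)-R_B(\sigma,\hat{\tau};p_\beta)}{\Delta}\right| \leq \varepsilon + \frac{2\varepsilon'}{\Delta}.
\]

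Finally, I would invoke the mean value theorem. The function $\tau\mapsto R_B(\sigma,\tau;p_\beta)=\mathbb{E}[(\eta(B+\sigma W;\tau)-B)^2]$ is continuously differentiable in $\tau$ on $(0,\infty)$: although the soft-thresholding operator has a kink, a standard dominated-convergence argument (using that $W$ has a bounded Gaussian density and $B$ has bounded second moment) permits differentiation under the expectation. Hence there exists $\bar{\tau}\in[\hat{\tau},\hat{\tau}+\Delta]$ with
\[
\frac{\partial R_B(\sigma,\bar{\tau};p_\beta)}{\partial \tau}=\frac{R_B(\sigma,\hat{\tau}+\Delta;p_\beta)-R_B(\sigma,\hat{\tau};p_\beta)}{\Delta}.
\]
Combining the two previous displays and choosing $\varepsilon'$ so that $\varepsilon+2\varepsilon'/\Delta<\epsilon$ yields the claimed bound on $|\partial R_B/\partial \tau|$ at $\bar{\tau}$.

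The main obstacle is the careful ordering of the limits and the choice of constants: $\varepsilon'$ has to be taken small relative to $\Delta$ before taking $h\to 0$ and $p\to\infty$, so that the $2\varepsilon'/\Delta$ error from transferring finite differences from $\hat{R}^t_{p,h}$ to $R_B$ does not overwhelm the $\varepsilon$ margin in the stopping rule. A secondary subtlety, less severe, is justifying differentiability of $R_B$ in $\tau$ through the kink of $\eta$; this is handled by either the dominated-convergence argument above or by writing $R_B$ explicitly as a Gaussian integral and differentiating under the integral sign.
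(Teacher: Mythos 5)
Your argument is correct and takes essentially the same route the paper intends: the paper omits the proof entirely, remarking only that the result is a straightforward application of Theorem \ref{thm:amptune1}, and your combination of the stopping rule, the uniform (in $\tau$) convergence of $\hat{R}^t_{h,p}$ to $R_B$, and the mean value theorem applied to the differentiable function $\tau \mapsto R_B(\sigma,\tau;p_\beta)$ is precisely that application. The only minor caveat is that termination of the bisection is presupposed by the theorem statement rather than guaranteed by interval halving alone, but this does not affect the validity of your argument.
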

This result is a straightforward application of Theorem \ref{thm:amptune1} (proved in Section \ref{sec:proofuniform}) and is skipped here. 

This theorem implies that if $\epsilon$ and $\Delta$ are small numbers, then the derivative of $R_B$ is also small at $\hat{\tau}^t_{B,p}$ (Note that the derivative of the risk is a continuous function); hence, our estimate will be close to the $\tau^{*,t}$. We will show in Section \ref{sec:simulation results} that (i) The performance of the bisection method is not sensitive to the exact value of $\epsilon$ and $\Delta$, and (ii) These two parameters are easy to tune.

\subsection{Connection to Optimal Tuning of $\lambda$ in LASSO}\label{ssec:connectLASSOAMPopt}
In the last section we showed how one can optimize the threshold parameters of AMP at every iteration. There is a connection between the estimate of the optimally-tuned AMP and the solution of LASSO for the optimal value of $\lambda$. This section explores this connection. Suppose that we run AMP with the optimal parameters $\tau^{*,1}, \tau^{*,2}, \ldots$ defined in Definition \ref{def:tau}, and obtain $\beta^1_*, \beta^2_*, \ldots$. 

\begin{proposition}\label{prop:optlamoptampcon}
Consider the converging sequence $\{\beta_o(p), X(p), w(p)\}$, and let the elements of $X$ be drawn iid from $N(0,1/n)$. Let $\hat{\beta}_\lambda(p)$ denote the solution of LASSO with regularization parameter $\lambda$. Then,
$$\lim_{t \rightarrow \infty} \lim_{p \rightarrow \infty} \frac{1}{p} \|\beta_o(p) - {\beta}_*^t(p)\|_2^2 =\inf_\lambda  \lim_{p \rightarrow \infty} \frac{1}{p} \|\hat{\beta}_\lambda(p) - \beta_o(p)\|_2^2.  $$ 
\end{proposition}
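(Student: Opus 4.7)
The strategy is to recast both sides of the claimed identity as limits of the scalar state-evolution sequences furnished by Theorems \ref{thm:eqpseudolip}--\ref{thm:equLASSOAMP}, and then combine the stepwise optimality of Definition \ref{def:tau} (via Theorem \ref{thm:stepwiseoptimal}) with the LASSO/AMP correspondence. Applying Theorem \ref{thm:eqpseudolip} to $\psi(u,v)=(u-v)^2$ and using the fixed-point relation \eqref{eq:fixedpoint11} gives, for every $\lambda>0$,
\[
\lim_{p\to\infty}\tfrac{1}{p}\|\hat{\beta}^{\lambda}(p)-\beta_o(p)\|_2^2 \;=\; \delta\bigl(\hsig_\lambda^2-\sigma_\omega^2\bigr),
\]
where $(\hsig_\lambda,\chi_\lambda)$ solves \eqref{eq:fixedpoint11}--\eqref{eq:fixedpoint21}. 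Likewise, Theorem \ref{thm:ampeqpseudo_lip} together with \eqref{eq:ampevolution} applied to the optimally-tuned iterates yields
\[
\lim_{p\to\infty}\tfrac{1}{p}\|\beta^{t}_{*}(p)-\beta_o(p)\|_2^2 \;=\; \delta\bigl((\sigma_*^{t+1})^2-\sigma_\omega^2\bigr),
\]
so the proposition reduces to the deterministic scalar identity $\lim_{t\to\infty}\sigma_*^{t}=\inf_{\chi>0}\hsig(\chi)$.

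\textbf{The ``$\le$'' direction.} Fix an arbitrary $\chi>0$ and let $\sigma^t(\chi)$ denote the SE iterates produced by the auxiliary threshold rule $\tau^t=\chi\sigma^t$. By Definition \ref{def:tau} (whose stepwise form is justified by Theorem \ref{thm:stepwiseoptimal}), $\sigma_*^{t+1}\le \sigma^{t+1}(\chi)$ for every $t$. Theorem \ref{thm:equLASSOAMP} supplies $\sigma^t(\chi)\to\hsig(\chi)$ as $t\to\infty$, so passing to the limit gives $\lim_{t}\sigma_*^{t}\le \hsig(\chi)$. Taking the infimum over $\chi$ (equivalently, over $\lambda$ via the bijection \eqref{eq:fixedpoint21}) and translating back through the displays of the previous paragraph gives the ``$\le$'' half of the proposition.

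\textbf{The ``$\ge$'' direction and the main obstacle.} Define the optimally-tuned scalar map
\[
G(\sigma) \;:=\; \sqrt{\sigma_\omega^2+\delta^{-1}\min_{\tau\ge 0}\mathbb{E}\bigl[(\eta(B+\sigma W;\tau)-B)^2\bigr]},
\]
so $\sigma_*^{t+1}=G(\sigma_*^t)$. I would first show that $G$ is non-decreasing, which propagates the ordering of $\sigma_*^0$ versus $\sigma_*^1$ to the whole sequence and renders $\{\sigma_*^t\}$ monotone; since it is bounded above (by any $\hsig(\chi)$ from the previous step) and below by $\sigma_\omega$, it has a limit $\sigma_*^\infty\ge\sigma_\omega$ that is a fixed point of $G$ by continuity. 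Letting $\tau^\infty$ be any minimizer of $\tau\mapsto\mathbb{E}[(\eta(B+\sigma_*^\infty W;\tau)-B)^2]$ and setting $\chi^\infty:=\tau^\infty/\sigma_*^\infty$, the scalar $\sigma_*^\infty$ then satisfies the $\chi^\infty$-SE fixed-point equation \eqref{eq:fixedpoint11}; invoking the uniqueness of that fixed point (Lemma \ref{lem:uniquefixedpointconc}) identifies $\hsig(\chi^\infty)=\sigma_*^\infty$. Defining $\lambda^\infty$ from $(\chi^\infty,\hsig(\chi^\infty))$ via \eqref{eq:fixedpoint21} and substituting into the LASSO identity of the first paragraph yields
\[
\inf_\lambda\lim_{p\to\infty}\tfrac{1}{p}\|\hat{\beta}^{\lambda}-\beta_o\|_2^2 \;\le\; \lim_{p\to\infty}\tfrac{1}{p}\|\hat{\beta}^{\lambda^\infty}-\beta_o\|_2^2 \;=\; \delta(\sigma_*^\infty)^2-\delta\sigma_\omega^2,
\]
which closes the equality. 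The main technical hurdle is monotonicity of $\sigma\mapsto \min_\tau\mathbb{E}[(\eta(B+\sigma W;\tau)-B)^2]$: one can obtain it by writing the larger-variance Gaussian as the smaller-variance one plus an independent Gaussian, fixing $\tau$ at the optimal value for the smaller $\sigma$, and applying conditional Jensen together with the contractive/shrinkage properties of $\eta(\,\cdot\,;\tau)$. Uniqueness of the $\chi$-SE fixed point, the other nontrivial ingredient, is already established in Lemma \ref{lem:uniquefixedpointconc}.
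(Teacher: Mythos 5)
Your overall architecture is essentially the paper's: reduce both sides to state-evolution scalars, use the $\lambda\leftrightarrow\chi$ correspondence of Theorems \ref{thm:eqpseudolip} and \ref{thm:equLASSOAMP} to show no threshold of the form $\chi\sigma^t$ can beat the one induced by the optimal $\lambda$, and then identify the limit of the optimally-tuned AMP with a $\chi$-type fixed point via the uniqueness in Lemma \ref{lem:uniquefixedpointconc}. One genuine improvement over the paper's write-up is that you actually justify the existence of $\lim_t \sigma_*^t$ (via monotonicity of the map $G$ and boundedness), whereas the paper's Section \ref{sec:step2lastproof} simply assumes $(\sigma^t,\tau^{*,t})$ converge.

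There is, however, a real gap in the one step you flag as the main hurdle. The conditional-Jensen argument you sketch for the monotonicity of $\sigma\mapsto\min_{\tau}\mathbb{E}[(\eta(B+\sigma W;\tau)-B)^2]$ does not close. Writing $B+\sigma_2 W = Y_1 + cV$ with $Y_1 = B+\sigma_1 W_1$ and applying Jensen conditionally on $(B,Y_1)$ gives $\mathbb{E}[(\eta(Y_1+cV;\tau)-B)^2]\geq \mathbb{E}[(g(Y_1)-B)^2]$ where $g(y)=\mathbb{E}_V[\eta(y+cV;\tau)]$ is a \emph{Gaussian-smoothed} soft threshold. That lower bound dominates the unrestricted Bayes risk at noise level $\sigma_1$, but $g$ is not itself a soft-thresholding rule, so there is no reason its risk should dominate $\min_{\tau'}\mathbb{E}[(\eta(Y_1;\tau')-B)^2]$; the class of smoothed thresholds can in principle outperform every soft threshold for a given prior. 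In other words, your argument proves monotonicity of the Bayes-optimal risk in $\sigma$, not of the soft-thresholding-constrained minimal risk, which is the quantity $G$ is built from. Similarly, the $1$-Lipschitz (contraction) property of $\eta(\cdot;\tau)$ controls $\mathbb{E}[(\eta(Y_2;\tau)-\eta(Y_1;\tau))^2]$ but leaves an uncontrolled cross term in the risk expansion. The statement you need is true and is exactly Lemma \ref{lem:inftau} of the paper; its proof there goes through an entirely different mechanism, namely the concavity of the state-evolution map in $\sigma^2$ (Lemma \ref{lem:convaceRisk}, from \citep{DoMaMo09}) combined with positivity of $\partial \bar{R}_B/\partial\sigma^2$ as $\sigma\to\infty$ for each fixed $\chi$, followed by taking the infimum over $\chi$. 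Replacing your sketched argument with a citation to Lemma \ref{lem:inftau} (which also underlies Theorem \ref{thm:stepwiseoptimal}, which you already invoke) repairs the proof; as written, the monotonicity of $G$ — and hence both the existence of $\sigma_*^\infty$ and, indirectly, the stepwise-optimality machinery — rests on an argument that does not establish it.
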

 
 The proof of this result can be found in Section \ref{sec:prooflasttheorem}.  This theorem implies that the final solution, the optimal AMP converges to, has the same MSE as the solution of LASSO with the optimal value of the regularization parameter $\lambda$. This means that not only AMP can provide a fast and low cost algorithm for solving LASSO, but also by optimal tuning of AMP we can avoid the optimal tuning of LASSO.

\section{Proofs of the main results }\label{sec:Thms}
\subsection{Background}\label{ssec:background}

\subsubsection{Quasiconvex functions}

Here, we briefly mention several properties of quasi-convex functions. For a more detailed introduction, see Section 3.4 of \citep{BoydVanderberghe}.  The following basic theorem regarding the quasi-convex functions is a key element in our proofs.

\begin{theorem}\citep{BoydVanderberghe}  \label{thm:quasiconvexnessuf}
A continuous function $f: \mathbb{R} \rightarrow \mathbb{R}$ is quasiconvex if and only if at least one of the following conditions holds:
\begin{itemize}
\item[1.] $f$ is non-decreasing.
\item[2.] $f$ is non-increasing.
\item[3.] There is a point $c$ in the domain of $f$ such that for $t< c$ f is non-increasing and for $t \geq c$, it is non-decreasing. 
\end{itemize}
\end{theorem}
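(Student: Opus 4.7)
The plan is to work with the sublevel-set characterization of quasi-convexity. Recall that $f$ is quasi-convex iff $S_\alpha \triangleq \{x : f(x) \leq \alpha\}$ is convex for every $\alpha \in \mathbb{R}$; since $f$ is continuous, each $S_\alpha$ is closed, hence (in $\mathbb{R}$) a closed interval, possibly empty or unbounded. The theorem is then a classification of which shapes such nested families of intervals can take.

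For the easy direction, I would simply verify that each of the three listed conditions makes every sublevel set an interval. If $f$ is non-decreasing, $S_\alpha$ is of the form $(-\infty, b(\alpha)]$; if non-increasing, $[a(\alpha), \infty)$; in the bowl-shaped case (condition 3), $S_\alpha$ is a closed interval containing $c$ when $\alpha \geq f(c)$ and empty otherwise. Each of these is convex, so $f$ is quasi-convex.

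For the nontrivial direction, let $m \triangleq \inf_{\mathbb{R}} f \in [-\infty, \infty)$. If $m$ is attained, pick any $c$ with $f(c) = m$; for $c \leq x < y$, quasi-convexity applied to the triple $(c, x, y)$ gives $f(x) \leq \max(f(c), f(y)) = f(y)$ since $f(y) \geq m$, so $f$ is non-decreasing on $[c, \infty)$, and the mirror argument yields non-increasing on $(-\infty, c]$, which is condition 3 (degenerating to 1 or 2 if the monotone region is all of $\mathbb{R}$). If $m = -\infty$, a sequence $x_n$ with $f(x_n) \to -\infty$ cannot remain in any compact set by continuity, so after passing to a subsequence $x_n \to +\infty$ or $x_n \to -\infty$. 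If $x_n \to +\infty$, then for any $y$ and any $z > y$, eventually $f(x_n) < f(y)$ and quasi-convexity gives $f(z) \leq \max(f(y), f(x_n)) = f(y)$, so $f$ is non-increasing. The opposite tail yields $f$ non-decreasing. Both tails simultaneously producing $-\infty$ leads to a contradiction, since any fixed $z$ could be sandwiched between two points with arbitrarily small $f$-values, forcing $f(z) = -\infty$.

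The case $m$ finite but not attained is the main obstacle, and my approach is the following. I first show every $S_\alpha$ with $\alpha > m$ is unbounded: otherwise, $\{S_{\alpha'}\}_{m < \alpha' \leq \alpha}$ is a nested family of non-empty closed intervals contained in a common bounded interval, and the finite intersection property produces a point in $\bigcap_{\alpha' > m} S_{\alpha'}$, which would be a minimizer of $f$, contradicting the assumption. Consequently each $S_\alpha$ with $\alpha > m$ is either $(-\infty, b(\alpha)]$, $[a(\alpha), \infty)$, or all of $\mathbb{R}$; the last option is ruled out uniformly because $S_\alpha = \mathbb{R}$ for every $\alpha > m$ would force $f \equiv m$, making $m$ attained. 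Finally, the nesting $S_{\alpha_1} \subseteq S_{\alpha_2}$ for $m < \alpha_1 < \alpha_2$ rules out mixing a left-half-line with a right-half-line, so all $S_\alpha$ share the same orientation, and an argument identical to the ``if'' direction shows $f$ is globally monotonic. Combining the three cases yields one of conditions 1--3 in every scenario, completing the proof.
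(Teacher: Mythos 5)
Your proof is correct, but note that the paper itself offers no proof of this statement: it is imported verbatim from Boyd and Vandenberghe as background (Section 3.4.4 of that book sketches essentially the argument you give), so there is nothing in the paper to compare against line by line. Your route through the sublevel-set characterization is the natural one, and all three cases of the ``only if'' direction are handled soundly: the attained-infimum case via the inequality $f(x)\leq\max(f(c),f(y))$ applied to the triple $(c,x,y)$, the $\inf f=-\infty$ case via an escaping sequence, and the finite-but-unattained case via the nested-interval argument showing every nonempty sublevel set above the infimum is unbounded. The one spot that deserves a slightly more careful statement is your claim that ``all $S_\alpha$ share the same orientation'': for a bounded monotone function such as $f(x)=\arctan(x)$ the sublevel sets are proper half-lines only for $\alpha$ near $\inf f$ and become all of $\mathbb{R}$ for large $\alpha$, so the correct assertion is that all sublevel sets that are \emph{proper} half-lines share one orientation (nesting forbids mixing a left half-line with a right half-line, and forbids a proper half-line sitting above a copy of $\mathbb{R}$), and the sets equal to $\mathbb{R}$ are harmless when you reconstruct monotonicity from $y\in S_{f(y)}$. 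With that phrasing repaired, the argument is complete and self-contained, which is arguably more than the paper provides for this lemma.
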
 

The following simple lemma shows that shifting and scaling preserve quasi-convexity. 

\begin{lemma} \label{lem:quascvx}
Let $a$ and $b> 0$ be two fixed numbers. Then
$f(x)$ is a quasi-convex function if and only if $g(x)=a+bf(x)$ is a quasi-convex function. 
\end{lemma}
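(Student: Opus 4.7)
The plan is to reduce the statement to a standard characterization of quasi-convexity and exploit the fact that $b>0$ makes the affine map $t\mapsto a+bt$ strictly increasing (hence order-preserving on $\mathbb R$). Specifically, I will use the definition that $f:\mathbb R\to\mathbb R$ is quasi-convex if and only if for every $x,y\in\mathbb R$ and every $\lambda\in[0,1]$,
\[
f(\lambda x + (1-\lambda)y) \;\leq\; \max\{f(x),f(y)\}.
\]

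First I would prove the ``only if'' direction. Assume $f$ is quasi-convex and fix $x,y\in\mathbb R$ and $\lambda\in[0,1]$. Applying the quasi-convexity inequality to $f$ and then multiplying by $b>0$ (which preserves the inequality and commutes with $\max$ because $b$ is positive) and adding $a$ gives
\[
g(\lambda x+(1-\lambda)y) \;=\; a+bf(\lambda x+(1-\lambda)y) \;\leq\; a+b\max\{f(x),f(y)\} \;=\; \max\{g(x),g(y)\},
\]
so $g$ is quasi-convex. The ``if'' direction is entirely symmetric: since $b>0$, we may write $f(x)=b^{-1}(g(x)-a)$, which is itself an affine transformation of $g$ with positive leading coefficient, so the same computation applied to $g$ yields the quasi-convexity inequality for $f$.

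An equivalent and slightly cleaner way to present this, which I would mention, is via sublevel sets: $f$ is quasi-convex iff $\{x:f(x)\leq\alpha\}$ is convex (possibly empty) for every $\alpha\in\mathbb R$. Because $b>0$, we have
\[
\{x: g(x)\leq\alpha\} \;=\; \bigl\{x: f(x)\leq (\alpha-a)/b\bigr\},
\]
and as $\alpha$ ranges over $\mathbb R$ so does $(\alpha-a)/b$. Thus the collections of sublevel sets of $f$ and of $g$ coincide, so one family consists of convex sets iff the other does. There is no genuine obstacle here; the only thing one has to be careful about is that the scaling $b$ is strictly positive, which is precisely what guarantees that the max on the right-hand side (equivalently, the direction of the sublevel inequality) is preserved.
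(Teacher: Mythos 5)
Your proof is correct and follows essentially the same route as the paper's: both directions are established by applying the quasi-convexity inequality $f(\lambda x+(1-\lambda)y)\leq\max\{f(x),f(y)\}$ and using that the affine map $t\mapsto a+bt$ with $b>0$ preserves order and commutes with $\max$. The sublevel-set reformulation you add is a valid equivalent presentation but does not change the substance of the argument.
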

\begin{proof}
First, assume that $f(x)$ is quasi-convex. Then, according to the definition of the quasi-convexity we can write
\begin{align}
g(\alpha x+((1-\alpha)y)&=a+bf(\alpha x+((1-\alpha)y)\nonumber \\ &\leq a+b\max(f(x),f(y))\nonumber \\ &= \max(a+bf(x),a+bf(y))\nonumber \\ &=\max(g(x),g(y)).
\end{align}
Hence, $g(x)$ is quasi-convex as well. On the other hand, suppose that $g(x)$ is quasi-convex. Then, according to the definition, we can write
\begin{align}
f(\alpha x+((1-\alpha)y)&=\frac{g(\alpha x+((1-\alpha)y)-a}{b}\nonumber \\ &\leq \frac{\max(g(x),g(y))-a}{b} \nonumber \\ &\leq \max\left(\frac{g(x)-a}{b},\frac{g(y)-a}{b}\right) \nonumber \\ &\leq \max(f(x),f(y)).
\end{align}
Therefore, $f(x)$ is quasi-convex as well.
\end{proof}

\subsubsection{Risk of the soft thresholding function}
In this section we review some of the basic results (some of which have been proved elsewhere) that will be used in our paper. Let 
$$\Psi(\sigma^2) \triangleq  \sigma_{\omega}^2 + \frac{1}{\delta} \mathbb{E}_{B,Z}\left[(\eta(B+ \sigma Z ; \chi \sigma) -B)^2\right],$$ 
where $B \sim p_\beta$  and $Z \sim N(0,1)$ are two independent random variables. Note that $\Psi$ is a function of $(\delta, \chi,  \sigma_{\omega}^2)$. We assume that all of these parameters are fixed and $\Psi$ is only a function of $\sigma^2$. 

\begin{lemma}\citep{DoMaMo09}\label{lem:convaceRisk}
$\Psi (\sigma^2)$ is a concave function of $\sigma^2$. 
\end{lemma}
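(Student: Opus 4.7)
The plan is to reduce the claim, by several layers of standard simplification, to a one-dimensional inequality about the soft-thresholding risk at unit noise variance, which is then verified by direct calculation.

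First, concavity in $v := \sigma^2$ is invariant under adding the constant $\sigma_\omega^2$, under multiplication by the positive scalar $1/\delta$, and under the outer expectation $\mathbb{E}_B$ (a convex combination of concave functions is concave). So it suffices to show, for each fixed $b \in \mathbb{R}$, that
\[
g_b(v) \;:=\; \mathbb{E}_{Z \sim N(0,1)}\bigl[(\eta(b + \sqrt{v}\,Z;\,\chi\sqrt{v}) - b)^2\bigr]
\]
is concave on $v > 0$.

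Second, I invoke the positive-homogeneity identity $\eta(cx;\, c\tau) = c\,\eta(x;\,\tau)$ (immediate from $\eta(u;\tau)=\mathrm{sign}(u)(|u|-\tau)_+$ when $c>0$), applied with $c=\sqrt{v}$. This gives $\eta(b+\sqrt{v}Z;\,\chi\sqrt{v}) = \sqrt{v}\,\eta(b/\sqrt{v} + Z;\,\chi)$, hence
\[
g_b(v) \;=\; v\, r\!\left(\frac{b}{\sqrt{v}}\right), \qquad r(\alpha) \;:=\; \mathbb{E}\bigl[(\eta(\alpha+Z;\,\chi)-\alpha)^2\bigr].
\]
By the symmetry $\eta(-u;\chi)=-\eta(u;\chi)$ and $Z \stackrel{d}{=} -Z$, the function $r$ is even; in particular $r'$ is odd and $r''$ is even. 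The problem is now to show that $v \mapsto v\, r(b/\sqrt{v})$ is concave on $(0,\infty)$ for every $b$.

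Third, I differentiate. With $\alpha(v)=b/\sqrt{v}$ and $\alpha'(v)=-\alpha/(2v)$, a short calculation gives
\[
g_b'(v) \;=\; r(\alpha) - \tfrac{\alpha}{2}\, r'(\alpha),\qquad g_b''(v) \;=\; \frac{1}{4v}\bigl[\alpha^2 r''(\alpha) - \alpha\, r'(\alpha)\bigr].
\]
Concavity of $g_b$ therefore reduces to the pointwise inequality $\alpha\, r'(\alpha) \geq \alpha^2 r''(\alpha)$ for all $\alpha$. Using that $r'$ is odd and $r''$ is even, this is equivalent to
\[
r'(\alpha) \;\geq\; \alpha\, r''(\alpha) \qquad \text{for every } \alpha \geq 0.
\]

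The main obstacle is this final one-dimensional inequality, which is where essentially all of the real work lies. My plan is to exploit the SURE representation
\[
r(\alpha) \;=\; \mathbb{E}\bigl[\min((\alpha+Z)^2,\, \chi^2)\bigr] \;+\; 2\,\mathbb{P}(|\alpha+Z|>\chi) \;-\; 1,
\]
which follows from Stein's identity together with $(\eta(u;\chi)-u)^2 = \min(u^2,\chi^2)$ and $\eta'(u;\chi) = \mathbb{I}(|u|>\chi)$. Differentiating in $\alpha$ using $\phi'(x)=-x\phi(x)$ and $\Phi'(x)=\phi(x)$ yields explicit closed-form expressions for $r'(\alpha)$, $r''(\alpha)$, and $r'''(\alpha)$ in terms of $\phi(\chi\pm\alpha)$, $\Phi(\chi\pm\alpha)$, and truncated moments of $N(\alpha,1)$. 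Rather than compare the two sides directly, I plan to study the auxiliary function $s(\alpha):= r'(\alpha) - \alpha r''(\alpha)$: note $s(0)=0$ (since $r'(0)=0$ by oddness), differentiate to get $s'(\alpha) = -\alpha\, r'''(\alpha)$, and then conclude by showing $r'''(\alpha) \leq 0$ for all $\alpha \geq 0$ through a careful sign analysis of the explicit formula for $r'''$, which collapses to a few terms involving $\phi(\chi\pm\alpha)$ and can be checked by elementary Gaussian tail manipulations. This sign check on $r'''$ on the half-line is the expected technical bottleneck.
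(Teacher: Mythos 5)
The paper itself gives no proof of this lemma (it is imported from the cited reference), so I am judging your proposal on its own terms. Your reduction is correct and clean: concavity survives the shift by $\sigma_\omega^2$, the scaling by $1/\delta$, and the mixture over $B$; the homogeneity $\eta(cx;c\tau)=c\,\eta(x;\tau)$ gives $g_b(v)=v\,r(b/\sqrt{v})$; your formulas for $g_b'$ and $g_b''$ are right; and the target inequality $r'(\alpha)\ge \alpha r''(\alpha)$ for $\alpha\ge 0$ is indeed what must be shown, and it is true. The gap is in your plan for that last step: it is \emph{not} true that $r'''(\alpha)\le 0$ for all $\alpha\ge 0$, so the monotonicity argument for $s(\alpha)=r'(\alpha)-\alpha r''(\alpha)$ via $s'(\alpha)=-\alpha r'''(\alpha)$ cannot close. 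To see this, differentiate the three-region decomposition of $r$ (the boundary terms cancel) to get the classical identity
\[
r'(\alpha)=2\alpha\,\mathbb{P}(|\alpha+Z|\le\chi)=2\alpha\bigl[\Phi(\chi-\alpha)-\Phi(-\chi-\alpha)\bigr]=:2\alpha P(\alpha),
\]
whence $s(\alpha)=r'(\alpha)-\alpha r''(\alpha)=-2\alpha^2P'(\alpha)=2\alpha^2\bigl[\phi(\chi-\alpha)-\phi(\chi+\alpha)\bigr]$. This function vanishes at $\alpha=0$, is strictly positive on $(0,\infty)$, and decays to $0$ as $\alpha\to\infty$; a function with that profile cannot be nondecreasing on $[0,\infty)$, so $s'=-\alpha r'''$ must change sign, i.e.\ $r'''$ must become positive somewhere on the half-line (numerically, for $\chi=1$ one finds $r'''(3)\approx 0.43>0$). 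The sign check you flagged as the technical bottleneck would therefore fail as stated.

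The fix is immediate and makes the bottleneck disappear: from the displayed identity, $r'(\alpha)-\alpha r''(\alpha)=2\alpha^2\bigl[\phi(\chi-\alpha)-\phi(\chi+\alpha)\bigr]\ge 0$ for all $\alpha\ge 0$, since $(\chi-\alpha)^2\le(\chi+\alpha)^2$ implies $\phi(\chi-\alpha)\ge\phi(\chi+\alpha)$. This is exactly $\alpha r'(\alpha)\ge\alpha^2 r''(\alpha)$, hence $g_b''(v)\le 0$, and with that substitution your argument becomes a complete proof; neither the SURE representation nor any third derivative is needed.
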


One major implication of this theorem that is related to the fixed points of $\Psi(\sigma^2)$ is summarized in the next lemma.

\begin{lemma}\label{lem:uniquefixedpointconc} \citep{DoMaMoNSPT}
For $ \sigma_{\omega}^2>0$, $\Psi$ has a unique fixed point. 
\end{lemma}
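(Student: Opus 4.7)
My plan is to study the function $h(s) \triangleq \Psi(s) - s$ on $s \in [0,\infty)$, writing $s = \sigma^2$ throughout, and to show that $h$ has exactly one zero. By Lemma \ref{lem:convaceRisk}, $\Psi(s)$ is concave in $s$, hence $h(s)$ is concave as the sum of a concave function and a linear function. Since fixed points of $\Psi$ correspond exactly to zeros of $h$, the lemma reduces to counting the zeros of this concave function.

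First I would evaluate $h$ at the boundary of its domain. At $s = 0$ the noise term vanishes and $\eta(B;0) = B$, so the expectation in $\Psi(0)$ vanishes and $h(0) = \Psi(0) = \sigma_\omega^2 > 0$. Next I would examine $h$ for large $s$. Using the scaling identity $\eta(B + \sigma Z; \chi\sigma) = \sigma\,\eta(Z + B/\sigma;\chi)$ together with dominated convergence, one checks
\[
\lim_{s \to \infty} \frac{\Psi(s)}{s} \;=\; \frac{1}{\delta}\,\mathbb{E}\bigl[\eta(Z;\chi)^2\bigr].
\]
For $\chi$ in the standard AMP regime (the one in which state evolution is being analyzed) this limit is strictly less than $1$, so $h(s) \to -\infty$. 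Combined with $h(0) > 0$ and continuity, the intermediate value theorem produces at least one zero, settling existence.

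For uniqueness I would argue directly from concavity plus the strict inequality $h(0) > 0$. Suppose for contradiction that $h$ had two distinct zeros $s_1 < s_2$; both necessarily lie in $(0,\infty)$ since $h(0) > 0$. Applying the concavity inequality to the triple $0 < s_1 < s_2$ gives
\[
h(s_1) \;\geq\; \frac{s_2 - s_1}{s_2}\, h(0) + \frac{s_1}{s_2}\, h(s_2) \;=\; \frac{s_2 - s_1}{s_2}\,\sigma_\omega^2 \;>\; 0,
\]
contradicting $h(s_1) = 0$. Hence $h$ has at most one zero, and combined with existence this yields the claimed uniqueness.

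The main obstacle I anticipate is the existence half: the positivity of $\sigma_\omega^2$ alone does not force $h$ to become negative, and one genuinely needs the large-$s$ asymptotics of $\Psi$, which rely on controlling $\frac{1}{\delta}\mathbb{E}[\eta(Z;\chi)^2] < 1$ (i.e., an implicit admissibility assumption on $\chi$ of the kind used throughout \citep{DoMaMoNSPT}). The uniqueness half, by contrast, is entirely clean: it follows from concavity (Lemma \ref{lem:convaceRisk}) together with the fact that soft thresholding at threshold zero is the identity, so $\sigma_\omega^2 > 0$ is exactly the hypothesis needed to rule out a second crossing.
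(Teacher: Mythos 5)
Your proof is correct and follows exactly the route the paper intends: the paper states this lemma without proof (citing the reference) immediately after Lemma \ref{lem:convaceRisk}, presenting it as ``one major implication'' of the concavity of $\Psi$ in $\sigma^2$, and your argument --- concavity of $h(s)=\Psi(s)-s$, the value $h(0)=\sigma_\omega^2>0$, the three-point concavity inequality for uniqueness, and the slope limit $\Psi(s)/s \to \frac{1}{\delta}\mathbb{E}[\eta(Z;\chi)^2]$ for existence --- is the standard completion of that hint. Your observation that existence genuinely requires the admissibility condition $\frac{1}{\delta}\mathbb{E}[\eta(Z;\chi)^2]<1$ is well taken: without it (e.g., $\chi=0$ with $\delta<1$) one has $\Psi(s)\geq \sigma_\omega^2 + s/\delta > s$ for all $s$ and no fixed point exists, so this hypothesis is indeed implicit in the lemma as stated and is made explicit in the cited source.
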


Another interesting result that will play crucial role below is the quasi-convexity of the risk of soft thresholding in terms of the threshold. Let $\mu$ be a random variable with distribution $\mu \sim G$ independent of $Z \sim N(0,1)$ and define
\[
r(\tau, G) \triangleq \mathbb{E}_{\mu, Z} \left[(\eta(\mu+Z ; \tau) -\mu)^2\right].
\]
We claim that $r(\tau, G)$ is a quasi-convex function of $\tau$. Define $\delta_0$ as a point mass at zero. For instance $G(x)= 0.5 \delta_0(x) + 0.5 \delta_0(x-1)$ is a distribution of a random variable that takes values $0$ and $1$ with probability half.

\begin{lemma}\label{lem:quasiconvexsoft}
$r( \tau; G)$ is a quasi-convex function of $\tau$. Furthermore, if $\mathbb{P} (\mu = 0) \neq 1$, the function is bowl-shaped. 
\end{lemma}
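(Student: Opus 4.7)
My plan is to reduce quasi-convexity of $r(\tau,G)$ to the convexity of a single auxiliary function. I will define
\[
H(\tau) \;\triangleq\; e^{\tau^2/2}\,\mathbb{P}(|\mu+Z|>\tau), \qquad \tau\geq 0,
\]
with $\mu\sim G$ and $Z\sim N(0,1)$ independent, and establish that $H$ is convex. Completing the square gives the ``Gaussian tilt'' identity $e^{\tau^2/2}\Phi(\mu-\tau) = \int_{-\infty}^{\mu} e^{\tau w}\phi(w)\,dw$; using it together with its counterpart for $\Phi(-\mu-\tau)$ (obtained by the substitution $w\mapsto -w$) rewrites
\[
H(\tau) \;=\; \mathbb{E}_\mu\!\left[\int_{-\infty}^{\mu} e^{\tau w}\phi(w)\,dw \;+\; \int_{\mu}^{\infty} e^{-\tau w}\phi(w)\,dw\right].
\]
Since $\tau\mapsto e^{\pm\tau w}$ is convex for each fixed $w$, both inner integrals are convex in $\tau$, and so is the $G$-mixture $H$.

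Next I would connect $H'$ to $r'$. A short calculation (most cleanly from the equivalent form $r(\tau,G)=\mathbb{E}[\min(X^2,\tau^2)]+2\mathbb{P}(|X|>\tau)-1$ with $X=\mu+Z$, derived via Stein's lemma) gives
\[
r'(\tau)\;=\;2\bigl[\tau\,\mathbb{P}(|X|>\tau) - f_{|X|}(\tau)\bigr].
\]
On the other hand, $\mathbb{P}(|X|>\tau)=e^{-\tau^2/2}H(\tau)$ implies $f_{|X|}(\tau) = \tau e^{-\tau^2/2}H(\tau) - e^{-\tau^2/2}H'(\tau)$, and the $\tau e^{-\tau^2/2}H(\tau)$ terms cancel, leaving the clean identity
\[
r'(\tau)\;=\;2\,e^{-\tau^2/2}\,H'(\tau),
\]
so that $\operatorname{sign}(r'(\tau)) = \operatorname{sign}(H'(\tau))$ on $[0,\infty)$.

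Quasi-convexity then follows easily: since $H$ is convex, $H'$ is non-decreasing on $[0,\infty)$, and a direct evaluation gives $H'(0) = -2\,\mathbb{E}_\mu[\phi(\mu)] < 0$. Hence $H'$ either stays non-positive throughout or crosses zero exactly once, from $-$ to $+$. The same sign pattern is inherited by $r'$, and by Theorem~\ref{thm:quasiconvexnessuf} the continuous function $r(\cdot,G)$ is quasi-convex. For the bowl-shaped refinement I will assume $\mathbb{P}(\mu=0)\neq 1$ and pick $\varepsilon>0$ with $\mathbb{P}(|\mu|>\varepsilon)>0$; the crude lower bound $\int_{-\infty}^{\mu} e^{\tau w}\phi(w)\,dw \geq (\varepsilon/2)\phi(\varepsilon)\,e^{\tau\varepsilon/2}$ for $\mu\geq\varepsilon$ (and the symmetric one for $\mu\leq -\varepsilon$) forces $H(\tau)\to\infty$. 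Together with $H(0)=1$, $H'(0)<0$, and strict convexity of $H$ (note $h_\mu''(\tau) = \int w^2 e^{\tau w}\phi(w)\,dw + \int w^2 e^{-\tau w}\phi(w)\,dw > 0$ for every $\mu,\tau$), $H$ has a unique interior minimizer, and via $r'(\tau)=2e^{-\tau^2/2}H'(\tau)$ so does $r(\cdot,G)$, which is therefore bowl-shaped in the sense of Definition~\ref{def:Qcvx}.

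The main obstacle is the first step: recognizing the right auxiliary function $H(\tau) = e^{\tau^2/2}\mathbb{P}(|\mu+Z|>\tau)$ and noticing that the Gaussian tilt identity exposes a mixture-of-exponentials representation that makes its convexity transparent. Once that representation is in hand, every remaining step---differentiation, sign analysis, and the application of Theorem~\ref{thm:quasiconvexnessuf}---is routine.
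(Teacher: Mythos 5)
Your proposal is correct, and the route is genuinely different from the paper's. The paper also reduces everything to showing that $\partial_\tau r(\tau,G)$ has at most one sign change, from $-$ to $+$, but it does so by forming the ratio $V(\tau;G)=\frac{\partial_\tau r(\tau,G)}{|\partial_\tau r(\tau,\delta_0)|}$ and proving via a quotient-rule computation that this ratio is strictly increasing; bowl-shapedness then requires a separate asymptotic estimate (the paper's Proposition~\ref{pro:ZC}, bounding $\Gamma_1$ and $\Gamma_2$ to show $\partial_\tau r>0$ for large $\tau$). You instead normalize by $e^{-\tau^2/2}$: your identity $r'(\tau)=2e^{-\tau^2/2}H'(\tau)$ is exactly the paper's expression $\partial_\tau r=2\mathbb{E}_\mu\big[\int_{-\infty}^{-\mu}w\phi(w-\tau)\,dw+\int_{-\infty}^{\mu}w\phi(w-\tau)\,dw\big]$ after the tilt $\phi(w-\tau)=e^{-\tau^2/2}e^{\tau w}\phi(w)$ and the reflection $w\mapsto -w$ in the first integral, so the two derivative formulas agree. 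What your normalization buys is that the monotone object becomes the derivative of the \emph{explicitly convex} function $H(\tau)=\mathbb{E}_\mu\big[\int_{-\infty}^{\mu}e^{\tau w}\phi(w)\,dw+\int_{\mu}^{\infty}e^{-\tau w}\phi(w)\,dw\big]$ (a mixture of exponentials in $\tau$), so monotonicity of $H'$, strictness, and the single sign change all come for free, and the bowl-shaped refinement follows from the soft statement $H(\tau)\to\infty$ together with $H(0)=1$ and $H'(0)=-2\mathbb{E}_\mu[\phi(\mu)]<0$, rather than from a quantitative lower bound on $r'$ at large $\tau$. I verified the supporting identities: the SURE form $r(\tau,G)=\mathbb{E}[\min(X^2,\tau^2)]+2\mathbb{P}(|X|>\tau)-1$ and the consequent $r'(\tau)=2[\tau\mathbb{P}(|X|>\tau)-f_{|X|}(\tau)]$ are correct for unit noise variance, and the cancellation leading to $r'=2e^{-\tau^2/2}H'$ is exact. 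The only cosmetic gaps are routine: the integration limits in your expression for $h_\mu''$ should be $(-\infty,\mu]$ and $[\mu,\infty)$, and the interchange of $\partial_\tau$ with $\mathbb{E}_\mu$ needs the (easy) domination $\int_{-\infty}^{\mu}|w|e^{\tau w}\phi(w)\,dw\le e^{\tau^2/2}\,\mathbb{E}|W+\tau|$ valid uniformly on compact $\tau$-sets. Neither affects the validity of the argument.
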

\begin{proof}
In order to prove this result we use Theorem \ref{thm:quasiconvexnessuf}. It is straightforward to prove that $r(\tau, G)$ is a differentiable function of $\tau$. According to Theorem \ref{thm:quasiconvexnessuf}, we have to prove that the derivative either has no sign change or has one sign change from negative to positive.  Figure \ref{fig:GammaFunc2} shows $\frac{\partial}{\partial \tau} r( \tau; G)$ as a function of $\tau$. As is clear from the figure, the derivative is not an increasing function, and hence we should not expect the second derivative to always be positive.

\begin{figure}[h!]
\includegraphics[width= 6cm]{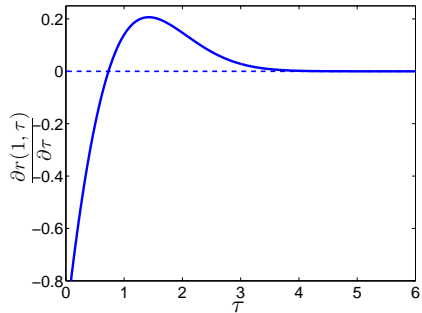}
\caption{The derivative of $\frac{\partial}{\partial \tau} r(\mu, \tau)$ as a function of $\tau$ for $\mu=1$ and $0< \tau < \infty$. Note that the derivative of the risk has only one sign change. Below that point the derivative is negative and above of that point is positive (even though it converges to zero as $\tau \rightarrow \infty$). Hence we expect the risk to be quasi-convex. }
\label{fig:GammaFunc2}
\end{figure}

 Instead we will show that  the ratio $V(\tau;G)\triangleq \frac{\frac{\partial}{\partial \tau}r(\tau, G)}{\left|\frac{\partial}{\partial\tau}r(\tau, \delta_0)\right|}$ is strictly increasing in $\tau \in [0,\infty)$. Once we prove this, we can conclude that $V(\tau, \mu)$ will have at most one sign change, and since $\left|\frac{\partial}{\partial\tau}r(\tau, \delta_0)\right|$ is always positive we can conclude that $\frac{\partial r( \tau, G )}{ \partial \tau}$ will have at most one sign change. According to Theorem \ref{thm:quasiconvexnessuf} this completes the proof of quasi-convexity. Therefore, our main goal in the rest of the proof is to show that $V(\tau,G)$ is strictly increasing. We have
\begin{align}
r(\tau, G)&=\bE_{ \mu, Z}[(\eta(\mu+Z;\tau)-\mu)^2] \nonumber \\
&=\bE_{\mu,Z}[(Z-\tau)^2\bI(\mu+Z\geq\tau)+(Z+\tau)^2\bI(\mu+Z\leq-\tau)+\mu^2\bI(|\mu+Z|<\tau)].
\end{align}
Therefore
\begin{align}\label{equ:DerRisk}
\frac{\partial r(\tau, G)}{\partial \tau}&=\bE_{\mu, Z}[-2(Z-\tau)\bI(\mu+Z\geq\tau)-(Z-\tau)^2\delta(\mu+Z-\tau)\nonumber \\ &\quad \quad+2(Z+\tau)\bI(\mu+Z\leq-\tau)-(Z+\tau)^2\delta(\tau+\mu+Z)\nonumber +\mu^2\delta(\tau-|\mu+Z|)] \nonumber \\ &= 
\bE_{\mu,Z}\left[2(Z+\tau)\bI(\mu+Z\leq -\tau)-2(Z-\tau)\bI(\mu+Z\geq\tau)\right] \nonumber \\ 
&=\underbrace{2 \mathbb{E}_{\mu} \left[\int_{-\infty}^{-(\mu+\tau)}(z+\tau)\phi(z)dz\right]}_{\Gamma_1}-\underbrace{2 \mathbb{E}_{\mu} \left[\int_{\tau-\mu}^{\infty}(z-\tau)\phi(z)dz\right]}_{\Gamma_2},
\end{align}
where $\phi$ denotes the probability density function of the standard normal random variable. Changing the integral variables to  $w=z+\tau$ in $\Gamma_1$ and $w=z-\tau$ in $\Gamma_2$ results in
\begin{align}\label{equ:DerRiskMod}
\frac{\partial r(\tau, G)}{\partial \tau}=2\mathbb{E}_{\mu} \left[\int_{-\infty}^{-\mu}w\phi(w-\tau)dw\right]+2 \mathbb{E}_{\mu}\left[ \int_{-\infty}^{\mu}w\phi(w-\tau)dw\right].
\end{align}
Consequently, we can write
\begin{align}
V(\tau;G) &= \frac{2 \mathbb{E}_{\mu} \bigg[\int_{-\infty}^{-\mu}w\phi(w-\tau)dw\bigg]+2 \mathbb{E}_{\mu} \bigg[\int_{-\infty}^{\mu}w\phi(w-\tau)dw\bigg]}{4\int_{-\infty}^{0}|w|\phi(w-\tau)dw} \nonumber\\
&=  \frac{2\mathbb{E}_{\mu}\bigg[ \int_{-\infty}^0 w \phi(w - \tau) dw -  \int_{-\mu}^0 w \phi(w- \tau) dw\bigg] }{4\int_{-\infty}^{0}|w|\phi(w-\tau)dw} +\frac{2\bE_{\mu}\bigg[ \int_{-\infty}^0 w \phi(w - \tau) dw + \int_0^\mu w \phi(w- \tau)dw\bigg]}{4\int_{-\infty}^{0}|w|\phi(w-\tau)dw}       \nonumber \\
&=\frac{ \mathbb{E}_{\mu} \bigg[\int_{-\mu}^{\mu}|w|\phi(w-\tau)dw\bigg]}{2\int_{-\infty}^{0}|w|\phi(w-\tau)dw}-1.
\end{align}Let $R(\tau, G )\triangleq \frac{\mathbb{E}_{\mu}\left[ \int_{-\mu}^{\mu} |w|\phi(w-\tau)dw\right]}{\int_{-\infty}^{0}|w|\phi(w-\tau)dw}$. Taking the derivative of $R(\tau,G)$ with respect to $\tau$ gives
\begin{align}
\frac{\partial R(\tau,G)}{\partial \tau}&=\frac{\mathbb{E}_{\mu}\left[\int_{-\mu}^{\mu}(w-\tau)|w|\phi(w-\tau)dw\right]\int_{-\infty}^0 |w|\phi(w-\tau)dw}{\left(\int_{-\infty}^{0}|w|\phi(w-\tau)dw\right)^2} \nonumber \\&~~~~-\frac{\mathbb{E}_{\mu}\left[\int_{-\mu}^{\mu}|w|\phi(w-\tau)dw\right] \int_{-\infty}^{0}(w-\tau)|w|\phi(w-\tau)dw} {\left(\int_{-\infty}^{0}|w|\phi(w-\tau)dw\right)^2} \nonumber \\ &=\frac{\overbrace{\mathbb{E}_{\mu}\left[ \int_{-\mu}^{\mu}w|w|\phi(w-\tau)dw\right]\ }^{\Gamma_3}\int_{-\infty}^0 |w|\phi(w-\tau)dw}{\left(\int_{-\infty}^{0}|w|\phi(w-\tau)dw\right)^2} \nonumber \\&~~~~ +\frac{\overbrace{ \mathbb{E}_{\mu}\left[ \int_{-\mu}^{\mu}|w|\phi(w-\tau)dw\right]\int_{0}^{\infty}w|w|\phi(w+\tau)dw}^{\Gamma_4}}{\left(\int_{-\infty}^{0}|w|\phi(w-\tau)dw\right)^2}. 
\end{align}
 $\Gamma_3$ can be simplified to
\begin{align}
\Gamma_3= \mathbb{E}_{\mu} \left[\int_0^{|\mu|}w^2(\phi(w-\tau)-\phi(w+\tau))dw\right]>0.
\end{align}
Therefore, since $\Gamma_4>0$, we have $\frac{\partial R(\tau,G)}{\partial \tau}>0$ which proves that $R(\tau, G)$ is an increasing function. This, in addition with the fact that $V(0; G) <0$, completes the proof of quasi-convexity. 
\end{proof}

According to Theorem \ref{thm:quasiconvexnessuf}, combining the facts that $\frac{\partial V(\tau, G)}{\partial \tau} \geq 0$ and $V(0, G) <0$ proves the quasi-convexity of the risk. However, this does not mean that the risk function is bowl-shaped (See Definition \ref{def:Qcvx}). In fact, the risk function is bowl-shaped if sign-change happens at certain value of $\tau$. We would now like to prove that the zero crossing in fact happens if $G \neq \delta_0$ (as mentioned before $\delta_0$ denotes the distribution of a point mass at zero). 
\begin{lemma} \label{cor:ZC} The risk of the soft thresholding function satisfies
$$\frac{\partial r(\tau, G)}{\partial \tau}\big|_{\tau=0}<0.$$
\end{lemma}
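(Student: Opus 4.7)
The plan is to evaluate the derivative formula \eqref{equ:DerRiskMod} that was already derived in the proof of Lemma \ref{lem:quasiconvexsoft} directly at $\tau = 0$. Since that formula expresses $\partial r(\tau,G)/\partial\tau$ as a pair of elementary integrals against $\phi(w-\tau)$, plugging in $\tau=0$ reduces everything to integrals of $w\phi(w)$, which have a closed form.

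Concretely, starting from
\[
\frac{\partial r(\tau,G)}{\partial\tau}=2\mathbb{E}_{\mu}\!\left[\int_{-\infty}^{-\mu}w\phi(w-\tau)\,dw\right]+2\mathbb{E}_{\mu}\!\left[\int_{-\infty}^{\mu}w\phi(w-\tau)\,dw\right],
\]
I will use the antiderivative identity $\int w\phi(w)\,dw=-\phi(w)$ (a direct consequence of $\phi'(w)=-w\phi(w)$), together with $\phi(-\infty)=0$, to evaluate at $\tau=0$:
\[
\int_{-\infty}^{a}w\phi(w)\,dw=-\phi(a),\qquad a\in\mathbb{R}.
\]
Applied to $a=-\mu$ and $a=\mu$ and using that $\phi$ is even, this yields
\[
\frac{\partial r(\tau,G)}{\partial\tau}\bigg|_{\tau=0}
=-2\mathbb{E}_\mu[\phi(-\mu)]-2\mathbb{E}_\mu[\phi(\mu)]
=-4\,\mathbb{E}_\mu[\phi(\mu)].
\]

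To finish I will note that $\phi(\mu)>0$ for every $\mu\in\mathbb{R}$, so $\mathbb{E}_\mu[\phi(\mu)]>0$ for any distribution $G$ (in particular the conclusion is uniform in $G$ and does not even require $\mathbb{P}(\mu=0)\neq 1$). Hence the derivative at $\tau=0$ is strictly negative, which is exactly the claim. I do not anticipate any real obstacle: the only point to check carefully is the interchange of expectation and differentiation that is already implicit in \eqref{equ:DerRiskMod}, which is justified by dominated convergence since $|w\phi(w-\tau)|$ is bounded by an integrable envelope uniformly for $\tau$ in a neighborhood of $0$, and since $\mu$ is assumed to have a finite second moment via the converging sequence framework.
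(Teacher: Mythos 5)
Your proof is correct and is essentially the paper's own argument: the paper likewise plugs $\tau=0$ into its derivative formula (its \eqref{equ:DerRisk}, which coincides with \eqref{equ:DerRiskMod} at $\tau=0$), evaluates the Gaussian integrals via $\int w\phi(w)\,dw=-\phi(w)$, and arrives at the same value $-4\,\mathbb{E}_\mu[\phi(\mu)]<0$. Your added remarks on the interchange of expectation and differentiation and on the claim holding for all $G$ are consistent with, and slightly more careful than, the paper's one-line computation.
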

\begin{proof}
We have
\[
\frac{\partial r(\tau, G)}{\partial \tau}\bigg|_{\tau=0}=2\mathbb{E}_{\mu}\left[\int_{-\infty}^{-\mu}z\phi(z)dz-\int_{-\mu}^{\infty}z\phi(z)dz\right] =2\mathbb{E}_{\mu}\left[-\frac{2e^{\frac{-\mu^2}{2}}}{\sqrt{2\pi}}\right] <0.
\]
\end{proof}
If we prove that for large enough $\hat{\tau}$ we have $\frac{\partial r(\tau, G)}{\partial \tau}\big|_{\tau=\hat{\tau}}>0$, then we can conclude that $\frac{\partial r(\tau, G)}{\partial \tau}$ has at least one zero-crossing. On the other hand, according to Lemma \ref{lem:quasiconvexsoft}, $\frac{\partial r(\tau, G)}{\partial \tau}$ has at most one zero crossing. Therefore, we would actually prove that $\frac{\partial r(\tau, G)}{\partial \tau}$ has exactly one zero-crossing and, consequently, the risk of soft-thresholding function is a bowl-shaped function.
We require the following two lemmas in the proof of our main claim, which is summarized in Proposition \ref{pro:ZC}.

\begin{lemma}\label{lem:eps1}
Suppose that $\mathbb{P} (\mu \neq  0) \neq 0$. Then, there exists an $\epsilon_0>0$ such that $\mathbb{P}(|\mu|>\epsilon_0)>0$.
\end{lemma}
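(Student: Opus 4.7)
The statement is a basic measure-theoretic fact, so the plan is very short. I would argue by contradiction (or equivalently by continuity of measure from below). Assume toward contradiction that for every $\epsilon>0$ we have $\mathbb{P}(|\mu|>\epsilon)=0$.

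Consider the sequence of events $A_n \triangleq \{|\mu| > 1/n\}$ for $n=1,2,\ldots$. These are nested: $A_n \subseteq A_{n+1}$, and their union satisfies
\[
\bigcup_{n=1}^{\infty} A_n \;=\; \{|\mu|>0\} \;=\; \{\mu \neq 0\}.
\]
By continuity of probability measures from below (or, equivalently, countable subadditivity),
\[
\mathbb{P}(\mu \neq 0) \;=\; \lim_{n\to\infty} \mathbb{P}(A_n) \;=\; \lim_{n\to\infty} \mathbb{P}(|\mu|>1/n).
\]
Under our contradiction hypothesis each term on the right is $0$, so $\mathbb{P}(\mu \neq 0)=0$, contradicting the assumption $\mathbb{P}(\mu \neq 0) \neq 0$.

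Hence there must exist some $n_0$ for which $\mathbb{P}(|\mu|>1/n_0)>0$, and we can take $\epsilon_0 = 1/n_0$. There is essentially no obstacle here; the only thing to be careful about is stating the limit/union step precisely (so that one does not need to quantify over uncountably many $\epsilon$ at once). The contrapositive route via the countable family $\{1/n\}$ handles this cleanly.
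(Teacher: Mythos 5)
Your proof is correct: the continuity-from-below argument over the countable family $A_n = \{|\mu|>1/n\}$ is exactly the right way to establish this, and the care you take to avoid quantifying over uncountably many $\epsilon$ is appropriate. The paper itself omits the proof entirely, calling it trivial, so your write-up simply supplies the standard argument the authors had in mind.
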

\noindent The proof is trivial and hence is skipped here.

\begin{proposition}\label{pro:ZC}
If $\mathbb{P} (\mu \neq 0) \neq 0$. Then $\frac{\partial r(\tau, G)}{\partial \tau}\big|_{\tau=\hat{\tau}}>0$ for the large values of $\hat{\tau}$ that make \eqref{eq:referintheorem} positive.
\end{proposition}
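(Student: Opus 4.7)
The plan is to use the representation of $\partial r(\tau,G)/\partial\tau$ from the proof of Lemma~\ref{lem:quasiconvexsoft} together with Gaussian tail asymptotics to show that the derivative is strictly positive once $\hat\tau$ is large enough. Recall from that proof that, after a symmetry argument that handles both signs of $\mu$, one has
\[
V(\tau;G) \;=\; \frac{\mathbb{E}_\mu\bigl[\int_{-|\mu|}^{|\mu|}|w|\phi(w-\tau)\,dw\bigr]}{2\int_{-\infty}^0 |w|\phi(w-\tau)\,dw}-1,
\]
and that $\partial r(\tau,G)/\partial\tau$ has the same sign as $V(\tau;G)$. So it suffices to show that the ratio above diverges to $+\infty$ as $\tau\to\infty$. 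Combined with Lemma~\ref{cor:ZC} (which gives $V(0;G)<0$) and the monotonicity of $V(\cdot;G)$ established in Lemma~\ref{lem:quasiconvexsoft}, this pins down exactly one zero crossing and upgrades the quasi-convex risk to a bowl-shaped one.

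For the numerator I would invoke Lemma~\ref{lem:eps1} to pick $\epsilon_0>0$ with $p_0:=\mathbb{P}(|\mu|>\epsilon_0)>0$. On the event $\{|\mu|>\epsilon_0\}$, the interval of integration contains $[\epsilon_0/2,\epsilon_0]$, on which the integrand $|w|\phi(w-\tau)$ is increasing in $w$ as soon as $\tau>\epsilon_0$, so its minimum is attained at $w=\epsilon_0/2$. This yields the crude but sufficient lower bound
\[
\mathbb{E}_\mu\Bigl[\int_{-|\mu|}^{|\mu|}|w|\phi(w-\tau)\,dw\Bigr] \;\geq\; \frac{p_0\,\epsilon_0^2}{4}\,\phi(\tau-\epsilon_0/2).
\]

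For the denominator, the substitutions $u=-w$ and then $s=u+\tau$ give
\[
\int_{-\infty}^0 |w|\phi(w-\tau)\,dw \;=\; \int_\tau^\infty (s-\tau)\phi(s)\,ds \;=\; \phi(\tau)-\tau\bigl(1-\Phi(\tau)\bigr).
\]
Here lies the only delicate step: the leading Mills-ratio term $1-\Phi(\tau)\sim\phi(\tau)/\tau$ cancels $\phi(\tau)$ exactly, so one must retain the next term $1-\Phi(\tau)=\phi(\tau)/\tau-\phi(\tau)/\tau^3+O(\phi(\tau)/\tau^5)$ to identify the true asymptotic order $\phi(\tau)/\tau^2$ for the denominator. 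Putting the two bounds together,
\[
V(\tau;G)+1 \;\gtrsim\; \frac{p_0\,\epsilon_0^2}{8}\,\tau^2\,\frac{\phi(\tau-\epsilon_0/2)}{\phi(\tau)} \;=\; \frac{p_0\,\epsilon_0^2}{8}\,\tau^2\,e^{\tau\epsilon_0/2-\epsilon_0^2/8},
\]
which tends to $+\infty$, so $V(\hat\tau;G)>0$ and hence $\partial r(\tau,G)/\partial\tau|_{\tau=\hat\tau}>0$ for all sufficiently large $\hat\tau$. The main obstacle is the Mills-ratio cancellation; once $\phi(\tau)/\tau^2$ is correctly identified as the denominator order, the exponentially large factor $\phi(\tau-\epsilon_0/2)/\phi(\tau)=e^{\tau\epsilon_0/2-\epsilon_0^2/8}$ from the numerator dominates trivially, and exponential-beats-polynomial closes the argument.
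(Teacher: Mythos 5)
Your proof is correct, but it takes a somewhat different technical route from the paper's. The paper works directly with the decomposition of the derivative in \eqref{equ:DerRiskMod}, splitting it into a negative part $\Gamma_1=\mathbb{E}_{\mu}\bigl[\int_{-\infty}^{0}w\phi(w-\tau)dw\bigr]$ and a positive part $\Gamma_2=\mathbb{E}_{\mu}\bigl[\int_{0}^{\mu}w\phi(w-\tau)dw\bigr]$; it then bounds $|\Gamma_1|\leq\sqrt{2/\pi}\,{\rm e}^{-\tau^2/2}$ by the elementary observation that ${\rm e}^{w\tau}\leq 1$ for $w\leq 0$ (see \eqref{eq:Gamma1Simp}), and lower-bounds $\Gamma_2$ by explicitly evaluating $\int_0^{\epsilon_0}w{\rm e}^{w\tau}dw$ in \eqref{eq:Gamma2Simp}, so that the factor ${\rm e}^{\epsilon_0\tau}/\tau$ dominates in \eqref{eq:referintheorem}. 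You instead normalize by $\bigl|\frac{\partial}{\partial\tau}r(\tau,\delta_0)\bigr|$ and show that the ratio $V(\tau;G)+1$ from Lemma \ref{lem:quasiconvexsoft} diverges; this forces you to identify the exact order $\phi(\tau)/\tau^2$ of the denominator $\phi(\tau)-\tau(1-\Phi(\tau))$, which requires the second-order Mills-ratio bound $1-\Phi(\tau)\geq\phi(\tau)(1/\tau-1/\tau^3)$ — a genuine extra ingredient that the paper's direct approach sidesteps. In exchange, your numerator bound is cruder and simpler (minimum of the integrand on $[\epsilon_0/2,\epsilon_0]$ rather than an exact antiderivative), and working with $V$ ties the conclusion more tightly to the monotonicity statement already proved in Lemma \ref{lem:quasiconvexsoft}. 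Both arguments share the essential mechanism: Lemma \ref{lem:eps1} supplies an $\epsilon_0$ with $\mathbb{P}(|\mu|>\epsilon_0)>0$, whose contribution carries an extra factor ${\rm e}^{c\epsilon_0\tau}$ relative to the $\mu=0$ contribution, and exponential growth beats the polynomial corrections. Your argument is complete; the only place to be slightly more explicit in a write-up is that the Mills-ratio step needs the two-sided inequality (the upper bound $1-\Phi(\tau)\leq\phi(\tau)/\tau$ alone only gives nonnegativity of the denominator, and you need the lower bound on $1-\Phi(\tau)$ to get the upper bound $\phi(\tau)/\tau^2$ on the denominator).
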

\begin{proof} Without loss of generality and for simplicity of notation, we assume that $\mathbb{P} (\mu \geq 0) =1$.
Using the same change of variable technique as in (\ref{equ:DerRiskMod}), we have
\begin{align}
\frac{\partial r(\tau, G)}{\partial \tau}&=2\mathbb{E}_{\mu} \left[\int_{-\infty}^{-\mu}w\phi(w-\tau)dw\right]+2 \mathbb{E}_{\mu}\left[ \int_{-\infty}^{\mu}w\phi(w-\tau)dw\right] \nonumber \\
&=2\mathbb{E}_{\mu} \left[\int_{-\infty}^{-\mu}w\phi(w-\tau)dw+\int_{-\infty}^{0}w\phi(w-\tau)dw \right] +\mathbb{E}_{\mu}\left[ \int_{0}^{\mu}w\phi(w-\tau)dw\right] \nonumber \\
&\geq 4\underbrace{\mathbb{E}_{\mu} \left[\int_{-\infty}^{0}w\phi(w-\tau)dw\right]}_{\Gamma_1}+2\underbrace{\mathbb{E}_{\mu}\left[ \int_{0}^{\mu}w\phi(w-\tau)dw\right]}_{\Gamma_2}.
\end{align}
Note that $\Gamma_1 <0$, and $\Gamma_2 >0$. Our goal is to show that for large values of $\tau$, $\Gamma_2 > |\Gamma_1|$. To achieve this goal,
we find an upper bound for $|\Gamma_1|$ and a lower bound for $\Gamma_2$. Simplifying $\Gamma_1$ gives
\begin{align}\label{eq:Gamma1Simp}
\Gamma_1&=\mathbb{E}_{\mu} \left[\int_{-\infty}^{0}w\phi(w-\tau)dw\right] =\frac{2e^{\frac{-\tau^2}{2}}}{\sqrt{2\pi}}\int_{-\infty}^{0}we^{\frac{-w^2}{2}+w\tau}dw \nonumber \\
&\geq \frac{2e^{\frac{-\tau^2}{2}}}{\sqrt{2\pi}}\int_{-\infty}^{0}we^{\frac{-w^2}{2}}dw = -\frac{e^{\frac{-\tau^2}{2}}}{\sqrt{2\pi}}\int_{-\infty}^{0}|w|e^{\frac{-w^2}{2}}dw \geq -\sqrt{\frac{2}{\pi}} e^{\frac{-\tau^2}{2}}.
\end{align}
Therefore, it is sufficient to prove that $\Gamma_2> \sqrt{\frac{2}{\pi}}  e^{\frac{-\tau^2}{2}}$. To achieve this goal, we first use $\epsilon_0$  as introduced in Lemma \ref{lem:eps1}; hence, we can obtain a lower bound for $\Gamma_2$ as:
\begin{align}\label{eq:Gamma2Simp}
\Gamma_2&=\mathbb{E}_{\mu} \left[\int_{0}^{\mu}w\phi(w-\tau)dw\right] \geq \mathbb{P}(\mu > \epsilon_0) \int_{0}^{\epsilon_0} w \phi (w- \tau) dw \nonumber \\
 &=\mathbb{P}(\mu > \epsilon_0) \frac{e^{\frac{-\tau^2}{2}}}{\sqrt{2\pi}} \int_{0}^{\epsilon_0} we^{\frac{-w^2}{2}+w \tau}dw \geq \mathbb{P}(\mu > \epsilon_0) \frac{e^{\frac{-\tau^2}{2}- \frac{\epsilon_0^2}{2}} }{\sqrt{2\pi}} \int_{0}^{\epsilon_0} we^{w \tau}dw \nonumber \\
& = \mathbb{P}(\mu > \epsilon_0) \frac{e^{\frac{-\tau^2}{2}- \frac{\epsilon_0^2}{2}} }{\sqrt{2\pi}} \left( \frac{\epsilon_0 {\rm e}^{\epsilon_0 \tau}}{\tau} - \frac{{\rm e}^{\epsilon_0 \tau}}{\tau^2} + \frac{1}{\tau^2} \right).
\end{align}
Define $c_1 \triangleq \sqrt{\frac{2}{\pi}}$ and $c_2 \triangleq \mathbb{P} (\mu > \epsilon_0) \frac{\rm e^{- \epsilon_0^2/2}}{\sqrt{2 \pi}}$. Combining (\ref{eq:Gamma1Simp}) and (\ref{eq:Gamma2Simp}) gives 
\begin{align}\label{eq:referintheorem}
\Gamma_1+\Gamma_2 \geq \underbrace{ \left(c_2 \frac{ \epsilon_0 e^{\epsilon_0 \tau}}{\tau}- c_2 \frac{\rm e^{\epsilon_0 \tau}}{\tau^2} +  \frac{c_2}{\tau^2}+ c_1\right) }_{\Lambda}e^{\frac{-\tau^2}{2}}.
\end{align} 
Clearly, for large values of $\tau$, the first term of $\Lambda$ is dominant, and hence $\Lambda$ is positive. Therefore, the proof is complete.
\end{proof}
Combining Corollary \ref{cor:ZC}, Proposition \ref{pro:ZC}, and the fact that $\frac{\partial r(\tau,G)}{\partial \tau}$ has at most one zero-crossing proves that $r(G,\tau)$ is a quasi-convex and bowl-shaped function (see Definition \ref{def:Qcvx}).

\begin{proposition}
$r(G,\tau)$ is a strictly decreasing function of $\tau$ for the case $G=\delta_0$.
\end{proposition}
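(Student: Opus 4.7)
The plan is to directly specialize the derivative formula (\ref{equ:DerRiskMod}) to the degenerate distribution $G = \delta_0$ and argue that the resulting expression is strictly negative on all of $(-\infty,\infty)$ (or, physically, on $[0,\infty)$ where the threshold lives).

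First, I would note that for $G = \delta_0$ the random variable $\mu$ equals $0$ almost surely, so the two expectations in (\ref{equ:DerRiskMod}) collapse and both the integration limits $\pm \mu$ become $0$. This gives
\[
\frac{\partial r(\tau, \delta_0)}{\partial \tau} \;=\; 4 \int_{-\infty}^{0} w\, \phi(w-\tau)\, dw.
\]
Next I would observe that on the domain of integration the integrand $w\, \phi(w-\tau)$ is strictly negative (since $w<0$ and $\phi>0$ everywhere), so the integral is strictly negative for every $\tau \in \mathbb{R}$. Consequently $r(\tau, \delta_0)$ is strictly decreasing in $\tau$, as claimed.

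If a more explicit expression is wanted (and to align with the spirit of the other derivative computations in the paper), I would evaluate the integral in closed form via the substitution $u = w-\tau$, giving
\[
\int_{-\infty}^{0} w\, \phi(w-\tau)\, dw \;=\; -\phi(\tau) \;+\; \tau\bigl(1-\Phi(\tau)\bigr),
\]
and then invoke the strict Mill's ratio inequality $\tau(1-\Phi(\tau)) < \phi(\tau)$ for $\tau \geq 0$ (with the case $\tau < 0$ being even easier, since then $\tau(1-\Phi(\tau))\le 0$) to conclude the same negativity.

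There is no substantive obstacle here: once the general derivative formula (\ref{equ:DerRiskMod}) is in hand, the point-mass case reduces to the observation that $w\phi(w-\tau)<0$ on $(-\infty,0)$, and the rest is bookkeeping. The only minor care needed is in making sure the claim is understood for $\tau$ in the relevant range (the threshold parameter $\tau\ge 0$) and in citing Mill's inequality if the closed-form route is preferred.
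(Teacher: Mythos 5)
Your proof is correct and takes essentially the same route as the paper: the paper computes $r(\delta_0,\tau)=\int_{\tau}^{\infty}(z-\tau)^2\phi(z)\,dz+\int_{-\infty}^{-\tau}(z+\tau)^2\phi(z)\,dz$ directly and differentiates, while you specialize the general derivative formula \eqref{equ:DerRiskMod} to $\mu=0$; both reduce to a one-line sign argument on the same quantity, since your $4\int_{-\infty}^{0}w\,\phi(w-\tau)\,dw = 4\bigl(\tau(1-\Phi(\tau))-\phi(\tau)\bigr) = -4\int_{\tau}^{\infty}(z-\tau)\phi(z)\,dz$. If anything, your closed form is the more careful one: the paper's displayed derivative $-4\int_{\tau}^{\infty}z\phi(z)\,dz$ drops the $+4\tau(1-\Phi(\tau))$ boundary term (harmless for the conclusion, which your negative-integrand observation or Mill's inequality settles either way).
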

\begin{proof}
We can write
\begin{align}
r(\delta_0,\tau)&=\bE\left[\eta^2(Z;\tau)\right] =\bE\left[(Z-\tau)^2\bI(Z>\tau)+(Z+\tau)^2\bI(Z<-\tau)\right] \nonumber \\
&=\int_{\tau}^{\infty}(z-\tau)^2\phi(z)dz+\int_{-\infty}^{-\tau}(z+\tau)^2\phi(z)dz.
\end{align}
Therefore, if we take the derivative with respect to $\tau$, we have $\frac{dr(\delta_0,\tau)}{d\tau}=-4\int_{\tau}^{\infty}z\phi(z)dz<0$.
\end{proof}

\subsection{Proof of Theorem \ref{lem:activeset}}\label{sec:proofactiveset}
First note that, according to Theorem \ref{thm:lassodetstate}, we have \newline $\lim_{N \rightarrow \infty} \frac{1}{N} \sum_{i=1}^N \mathbb{I} (\hat{\beta}^{\lambda}_{ i}(N) \neq 0) = \mathbb{P} (|\eta(B + \hat{\sigma} Z ; \chi \hat{\sigma}) | > 0)$, where $(\hat{\sigma},\lambda,\chi)$ satisfy the following equations: 
\begin{align}\label{equ:FixedPoint1}
&\hat{\sigma}^2=\sigma_z^2+\frac{1}{\delta}\bE_{B,Z}\left[(\eta(B+\hat{\sigma}Z; \chi \hat{\sigma})-B)^2\right], \nonumber \\
& \lambda=\chi\hat{\sigma}\left(1-\frac{1}{\delta}\bP(|B+\hat{\sigma}Z|>\chi \hat{\sigma})\right).
\end{align}
Therefore, we have to prove that $\frac{ d  \mathbb{P}(|\eta(B + \hat{\sigma} Z ; \chi \hat{\sigma}) | > 0) }{d \lambda} <0$. The difficulty of this task is clear from \eqref{equ:FixedPoint1}: $(\chi, \hat{\sigma}, \lambda)$ are complicated functions of each other whose explicit formulations are not known. Using the chain rule we have
\begin{eqnarray}\label{equ:ChainRule}
\frac{d}{d\lambda}\bP(|\eta(B+\hat{\sigma}Z;\chi \hat{\sigma})|>0)=\frac{d}{d\chi}\bP(|\eta(B+\hat{\sigma}Z;\chi\hat{\sigma})|>0)\frac{d\chi}{d\lambda}.
\end{eqnarray}
This expression enables us to break the proof into the following two simpler parts: (i) We prove that $\Big( \frac{d}{d\chi}\bP(|\eta(B+\hat{\sigma}Z;\chi\hat{\sigma})|>0) \Big) <0$. (ii) We prove that $\frac{d \chi} {d \lambda} >0$. Combining these two results with \eqref{equ:ChainRule} completes the proof. 

\begin{lemma}\label{lem:detovertau}
Let $(\chi, \hat{\sigma})$ satisfy \eqref{equ:FixedPoint1}. Then 
$\Big( \frac{d}{d\chi}\bP(|\eta(B+\hat{\sigma}Z;\chi \hat{\sigma})|>0) \Big) <0.$
\end{lemma}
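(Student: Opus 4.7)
The plan is to analyze $D(\chi) \triangleq \mathbb{P}(|B+\hat{\sigma}Z|>\chi\hat{\sigma})$ along the fixed-point curve, where $\hat{\sigma}=\hat{\sigma}(\chi)$ is implicitly defined by the first equation of \eqref{equ:FixedPoint1}. Viewing $D(\chi,\hat{\sigma})$ as a function of two independent arguments and setting $\Psi(\chi,\hat{\sigma}) \triangleq \mathbb{E}_{B,Z}[(\eta(B+\hat{\sigma}Z;\chi\hat{\sigma})-B)^2]$, the chain rule combined with implicit differentiation of $\hat{\sigma}^2 = \sigma_z^2 + \Psi/\delta$ gives
\[
\frac{d}{d\chi}D \;=\; D_\chi \;+\; D_{\hat{\sigma}}\cdot\frac{\Psi_\chi}{2\delta\hat{\sigma}-\Psi_{\hat{\sigma}}}.
\]
Lemma~\ref{lem:uniquefixedpointconc} forces the denominator $2\delta\hat{\sigma}-\Psi_{\hat{\sigma}}$ to be strictly positive, because the concave map $\sigma^2\mapsto\sigma_z^2+\Psi/\delta$ must cross the identity transversally from above at its unique fixed point. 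So the sign of $dD/d\chi$ coincides with the sign of $N \triangleq D_\chi(2\delta\hat{\sigma}-\Psi_{\hat{\sigma}}) + D_{\hat{\sigma}}\Psi_\chi$, and it suffices to prove $N<0$.

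Next, I compute the four partial derivatives explicitly by conditioning on $B$. Writing $D = 2-\mathbb{E}_B[\Phi(\chi-B/\hat{\sigma})+\Phi(\chi+B/\hat{\sigma})]$ and differentiating yields $D_\chi = -K$ and $D_{\hat{\sigma}} = -L/\hat{\sigma}$, where
\[
K \triangleq \mathbb{E}_B[\phi(\chi-B/\hat{\sigma})+\phi(\chi+B/\hat{\sigma})]>0, \qquad L \triangleq \mathbb{E}_B\bigl[(B/\hat{\sigma})(\phi(\chi-B/\hat{\sigma})-\phi(\chi+B/\hat{\sigma}))\bigr]\ge 0.
\]
The nonnegativity of $L$ follows from the pointwise inequality $B(\phi(\chi-B/\hat{\sigma})-\phi(\chi+B/\hat{\sigma}))\ge 0$, which is immediate from the evenness and unimodality of $\phi$ combined with $|\chi-B/\hat\sigma|\le \chi+|B|/\hat\sigma$. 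For $\Psi_\chi$ and $\Psi_{\hat{\sigma}}$ I differentiate under the expectation using $\partial_x\eta(x;\tau)=\mathbb{I}(|x|>\tau)$ and $\partial_\tau\eta(x;\tau)=-\mathrm{sign}(x)\mathbb{I}(|x|>\tau)$, and simplify using $\eta(B+\hat{\sigma}Z;\chi\hat{\sigma})-B = \hat{\sigma}[Z-\chi\,\mathrm{sign}(B+\hat{\sigma}Z)]$ on the active region. This produces
\[
\Psi_\chi = 2\hat{\sigma}^2(\chi D - K), \qquad \Psi_{\hat{\sigma}} = 2\hat{\sigma}\bigl[(1+\chi^2)D - \chi K - L\bigr].
\]

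Substituting and expanding, the cross-term $2\hat{\sigma}KL$ cancels. Denoting by $K_\chi$ the partial derivative of $K$ with respect to $\chi$ at fixed $\hat{\sigma}$, a direct calculation gives $K_\chi = L - \chi K$, and after using this the numerator simplifies cleanly to
\[
N \;=\; -2\hat{\sigma}\bigl[K(\delta - D) + \chi K^2 + \chi D\, K_\chi\bigr].
\]
The first two summands in the bracket are strictly positive: $K(\delta - D)>0$ because the constraint $\lambda>0$ in the second equation of \eqref{equ:FixedPoint1} forces $D<\delta$, and $\chi K^2>0$ trivially. The main obstacle is the third summand $\chi D K_\chi$, which can be negative (for instance, in the special case $B\equiv 0$ one has $K_\chi = -2\chi\phi(\chi)<0$). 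In that degenerate case, the classical Mills inequality $\phi(\chi)>\chi(1-\Phi(\chi))$ shows that $\chi K^2$ exceeds $-\chi D K_\chi$ with positive margin, leaving the bracket positive. For general $p_\beta$ I plan to establish a conditional Mills-type inequality for $u = \chi\pm B/\hat{\sigma}$, average it against $p_\beta$, and combine with the strict bound $D<\delta$ to show that the potentially negative portion of $\chi D K_\chi$ is always absorbed by the two positive summands $K(\delta - D) + \chi K^2$. Carrying out this bookkeeping into a single non-negative combination, uniformly over all fixed points of \eqref{equ:FixedPoint1}, is the technical heart of the proof.
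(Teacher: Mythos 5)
Your setup is the same as the paper's: reduce to $D(\chi)=\mathbb{P}(|B/\hat{\sigma}+Z|>\chi)$, apply the chain rule, and get $d\hat{\sigma}/d\chi$ by implicitly differentiating the first equation of \eqref{equ:FixedPoint1}. Your intermediate formulas check out: $D_\chi=-K$, $D_{\hat{\sigma}}=-L/\hat{\sigma}$, $\Psi_\chi=2\hat{\sigma}^2(\chi D-K)$, $\Psi_{\hat{\sigma}}=2\hat{\sigma}[(1+\chi^2)D-\chi K-L]$, the positivity of the denominator $2\delta\hat{\sigma}-\Psi_{\hat{\sigma}}$ (which in fact equals $2\delta\sigma_z^2/\hat{\sigma}+2\hat{\sigma}\,\mathbb{E}[(B/\hat{\sigma})^2\mathbb{I}(|B/\hat{\sigma}+Z|<\chi)]$ directly from the fixed-point relation, with no appeal to concavity needed), and the identity $K_\chi=L-\chi K$; I also verified that your $N=-2\hat{\sigma}[K(\delta-D)+\chi K^2+\chi D K_\chi]$ is algebraically equivalent to what the paper's computation produces. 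But the proof stops exactly where the lemma's actual content begins: you never establish that the bracket is positive. You verify it only in the degenerate case $B\equiv 0$ and then announce a plan (``I plan to establish a conditional Mills-type inequality\ldots Carrying out this bookkeeping\ldots is the technical heart of the proof''). A proof that defers its technical heart is not a proof; as written, the sign of $N$ is not determined.

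Worse, the specific reduction you chose makes that last step genuinely harder than it needs to be, and it is doubtful your sketched plan closes it. Before substituting the fixed-point relation, the only positive (dangerous) contribution to $dD/d\chi$ is $+LK/(\hat{\sigma}\mathcal{D})$, where $\mathcal{D}$ denotes the (positive) denominator of $d\hat{\sigma}/d\chi$; the paper pairs this with the term $-K\,\mathbb{E}[(B/\hat{\sigma})^2\mathbb{I}(|B/\hat{\sigma}+Z|<\chi)]/(\hat{\sigma}\mathcal{D})$, and since $K$ factors out of both, the comparison collapses to a single expectation over $B$ of the pointwise quantity $\frac{B}{\hat{\sigma}}\int_{-\chi}^{\chi}w\,\phi(w-B/\hat{\sigma})\,dw>0$ (the paper's $\Upsilon$). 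Your substitution of $\delta\hat{\sigma}^2-\Psi=\delta\sigma_z^2$ trades that structure for the bracket $K(\delta-D)+\chi K(K-\chi D)+\chi LD$, whose compensating positive terms are \emph{products of expectations} over $B$ (e.g.\ $\chi K^2$ and $KD$), while the potentially negative term $\chi K(K-\chi D)$ is as well (and $K<\chi D$ can indeed occur when $p_\beta$ puts mass far from the origin). A ``conditional Mills-type inequality for $u=\chi\pm B/\hat{\sigma}$, averaged against $p_\beta$'' yields inequalities between single expectations, not between products of expectations, so it does not directly deliver the positivity you need; the extra cross term $m[Q(\chi+m)-Q(\chi-m)]\le 0$ left over after summing the two Mills bounds is precisely the obstruction. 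To repair the argument, either carry out a genuine proof that the bracket is positive for all $(\chi,\hat{\sigma})$ on the fixed-point curve with $D<\delta$, or back up one step and regroup the terms as the paper does so that the comparison becomes pointwise in $B$.
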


\begin{proof}
Since $|\eta(B+\hat{\sigma}Z;\chi \hat{\sigma})|>0$ if and only if $|\frac{B}{\hat{\sigma}}+Z|>\chi$, it is sufficient to prove $\frac{d}{d\chi}\bP(|\frac{B}{\hat{\sigma}}+Z|>\chi)<0$. We have
\begin{align}\label{equ:Derivative}
\frac{d}{d\chi}\bP\left(\left|\frac{B}{\hat{\sigma}}+Z\right|>\chi \right)=\frac{\partial}{\partial\hat{\sigma}}\bP\left(\left|\frac{B}{\hat{\sigma}}+Z\right|>\chi\right)\frac{d\hat{\sigma}}{d\chi}+\frac{\partial}{\partial\chi}\bP\left(\left|\frac{B}{\hat{\sigma}}+Z\right|>\chi \right).
\end{align}
The rest of the proof has four main steps: (i) Calculation of $\frac{\partial}{\partial\hat{\sigma}}\bP\left(\left|\frac{B}{\hat{\sigma}}+Z\right|>\chi \right)$. (ii) Calculation of $\frac{\partial}{\partial\chi}\bP\big(\big|\frac{B}{\hat{\sigma}}\allowbreak+Z\big|>\chi \big)$. (iii) Calculation of $\frac{d\hat{\sigma}}{d\chi}$. Finally, (iv) Plugging the results of the above three steps in \eqref{equ:Derivative} and proving that $\Big( \frac{d}{d\chi}\bP\left(\left|\frac{B}{\hat{\sigma}}+Z\right|>\chi \right) \Big) <0$. 
Here are these four steps realized: \\

\textbf{Step i: Calculation of $\frac{\partial}{\partial\hat{\sigma}}\bP\left(\left|\frac{B}{\hat{\sigma}}+Z\right|>\chi \right)$}: Simple algebra leads us to 
\begin{align}
\frac{\partial}{\partial \hat{\sigma}} \mathbb{P} \left( \left| \frac{B}{\hsig} +Z \right| \geq \chi \right)&= \mathbb{E}_B\left[ \frac{\partial}{\partial \hsig}\bE_Z\left[\bI\left(\left|\frac{B}{\hsig}+Z\right|\geq\chi \right)\bigg|B\right]\right] \nonumber \\ & =\bE_B \left[\frac{\partial}{\partial \hsig}\left(\int_{\chi-\frac{B}{\hsig}}^{\infty}\phi(z)dz+\int_{-\infty}^{-\chi-\frac{B}{\hsig}}\phi(z)dz\right)\right] \nonumber \\ &=\bE_B\left[\frac{B}{\hsig^2}\phi\left(\frac{B}{\hsig}+\chi\right)-\frac{B}{\hsig^2}\phi\left(\frac{B}{\hsig}-\chi \right)\right].
\end{align}

\textbf{Step ii: Calculation of $\frac{\partial}{\partial\chi}\bP\left(\left|\frac{B}{\hat{\sigma}}+Z\right|>\chi  \right)$}: Similar to Step 1, we have
\begin{align}
\frac{\partial}{\partial \chi} \mathbb{P} \left( \left| \frac{B}{\hsig} +Z \right| \geq \chi \right)&= \mathbb{E}_B\left[ \frac{\partial}{\partial \chi} \bE_Z \left[\bI\left(\left|\frac{B}{\hsig}+Z\right|\geq\chi \right)\bigg|B\right]\right] \nonumber \\ & =\bE_B \left[\frac{\partial}{\partial \chi}\left(\int_{\chi -\frac{B}{\hsig}}^{\infty}\phi(z)dz+\int_{-\infty}^{-\chi-\frac{B}{\hsig}}\phi(z)dz\right)\right] \nonumber \\ &=\bE_B\left[-\phi\left(\frac{B}{\hsig}+\chi \right)-\phi\left(\frac{B}{\hsig}-\chi \right)\right].
\end{align}

\textbf{Step iii: Calculation of $\frac{d\hat{\sigma}}{d\chi}$}: We can rewrite (\ref{equ:FixedPoint1}) as 
\begin{eqnarray}\label{equ:ReFixedPoint}
\delta=\frac{\delta \sigma_z^2}{\hsig^2}+\bE_{B,Z}\left[\left(\eta\left(\frac{B}{\hsig}+Z;\chi \right)-\frac{B}{\hsig}\right)^2\right].
\end{eqnarray}
Taking the derivative of both sides of (\ref{equ:ReFixedPoint}) with respect to $\chi$ yields
\begin{align}
\displaybreak[0]
 0&=\frac{-2\delta\sigma_z^2}{\hsig^3}\frac{d\hsig}{d\chi}+\frac{d}{d \chi}\bE_{B,Z}\left[\left(\eta\left(\frac{B}{\hsig}+Z;\chi \right)-\frac{B}{\hsig}\right)^2\right] \nonumber \\
&  =\frac{-2\delta\sigma_z^2}{\hsig^3}\frac{d\hsig}{d\chi}+\frac{\partial}{\partial \chi}\bE_{B,Z}\left[\left(\eta\left(\frac{B}{\hsig}+Z;\chi \right)-\frac{B}{\hsig}\right)^2\right] \nonumber \\ &~~~~+\frac{\partial}{\partial \hsig}\bE_{B,Z}\left[\left(\eta\left(\frac{B}{\hsig}+Z;\chi \right)-\frac{B}{\hsig}\right)^2\right] \frac{d\hsig}{d\chi} \nonumber \\
&\displaybreak[0] =\frac{-2\delta\sigma_z^2}{\hsig^3}\frac{d\hsig}{d\chi}+2\bE_{B,Z}\left[\left(\eta\left(\frac{B}{\hsig}+Z;\chi\right)-\frac{B}{\hsig}\right)\frac{\partial}{\partial \chi}\left(\eta\left(\frac{B}{\hsig}+Z;\chi \right)-\frac{B}{\hsig}\right)\right] \nonumber \\ \displaybreak[0]&~~~~+2\bE_{B,Z}\left[\left(\eta\left(\frac{B}{\hsig}+Z;\chi \right)-\frac{B}{\hsig}\right)\frac{\partial}{\partial \hsig}\left(\eta\left(\frac{B}{\hsig}+Z;\chi \right)-\frac{B}{\hsig}\right)\right] \frac{d\hsig}{d\chi} \nonumber \\
&\displaybreak[0]=\frac{-2\delta\sigma_z^2}{\hsig^3}\frac{d\hsig}{d\chi}+2\bE_{B,Z}\bigg[\left(\eta\left(\frac{B}{\hsig}+Z;\chi\right)-\frac{B}{\hsig}\right)\bigg(-\bI\left(\frac{B}{\hsig}+Z>\chi \right)\nonumber \\ \displaybreak[0]
&\qquad \qquad \quad~~-\left(\frac{B}{\hsig}+Z-\chi\right)\delta\left(\frac{B}{\hsig}+Z-\chi \right)\nonumber \\ \displaybreak[0] &
\qquad \qquad \quad~~+\bI\left(\frac{B}{\hsig}+Z<-\chi \right)-\left(\frac{B}{\hsig}+Z+\chi \right)\delta\left(\frac{B}{\hsig}+Z+\chi \right)\bigg)\bigg]\nonumber \\
 &~~~~+2\bE_{B,Z}\bigg[\left(\eta\left(\frac{B}{\hsig}+Z;\chi \right)-\frac{B}{\hsig}\right)\bigg(\left(\frac{-B}{\hsig^2}\right)\bI\left(\frac{B}{\hsig}+Z>\chi \right)\nonumber \\
&\qquad \qquad \quad~~+\left(\frac{B}{\hsig}+Z-\chi \right)\delta\left(\frac{B}{\hsig}+Z-\chi \right)\left(\frac{-B}{\hat{\sigma}^2}\right)\nonumber \\
&\qquad \qquad \quad~~+\left(\frac{-B}{\hsig^2}\right)\bI\left(\frac{B}{\hsig}+Z<-\chi \right)\nonumber \\
&\qquad \qquad \quad~~+\left(\frac{B}{\hsig}+Z+\chi \right)\delta\left(\frac{B}{\hsig}+Z+\chi \right)\left(\frac{B}{\hsig^2}\right)\bigg)\bigg] \frac{d\hsig}{d\chi}\nonumber \\ 
&\displaybreak[0]=\frac{-\delta\sigma_z^2}{\hsig^3}\frac{d\hsig}{d\chi}+\bE_{B,Z}\left[\frac{B}{\hsig^2}\left(\eta\left(\frac{B}{\hsig}+Z;\chi \right)-\frac{B}{\hsig}\right)\left(\bI\left(\Big|\frac{B}{\hsig}+Z\Big|<\chi \right)\right)\right]\frac{d\hsig}{d\chi}
\nonumber \\ & ~~~~+\bE_{B,Z}\bigg[\left(\eta\left(\frac{B}{\hsig}+Z;\chi \right)-\frac{B}{\hsig}\right)\nonumber \\ &\qquad \qquad \quad~~\left(-\bI\left(\frac{B}{\hsig}+Z>\chi \right)+\bI\left(\frac{B}{\hsig}+Z<-\chi \right)\right)\bigg].\end{align}
Therefore, we can write
\begin{align}\label{equ:FinalDer}
\frac{d\hsig}{d\chi}&=\frac{E_{B,Z}\left[\left(\eta\left(\frac{B}{\hsig}+Z;\chi \right)-\frac{B}{\hsig}\right)\left(-\bI\left(\frac{B}{\hsig}+Z>\chi \right)+\bI\left(\frac{B}{\hsig}+Z<-\chi \right)\right)\right]}{\frac{\delta\sigma_z^2}{\hsig^3}+\bE_{B,Z}\left[\frac{B^2}{\hsig^3}\left(\bI\left(\Big|\frac{B}{\hsig}+Z\Big|<\chi \right)\right)\right]} \nonumber \\ 
&=\frac{\bE_B\left[\int_{\chi-\frac{B}{\hsig}}^{\infty}(\chi-z)\phi(z)dz+\int_{-\infty}^{-\chi-\frac{B}{\hsig}}(z+\chi)\phi(z)dz\right]}{\frac{\delta\sigma_z^2}{\hsig^3}+\bE_{B,Z}\left[\frac{B^2}{\hsig^3}\left(\bI\left(\Big|\frac{B}{\hsig}+Z\Big|<\chi \right)\right)\right]}.
\end{align}

\noindent \textbf{Step iv:  Proving that $\Big( \frac{d}{d\chi}\bP\left(\left|\frac{B}{\hat{\sigma}}+Z\right|>\chi \right) \Big) <0$}: If we plug (\ref{equ:FinalDer}) into (\ref{equ:Derivative}), we obtain
\begin{align}\label{equ:DerivativeContinued}
\displaybreak[0]
&\frac{d}{d\chi}\bP\left(\left|\frac{B}{\hat{\sigma}}+Z\right|>\chi \right) =\frac{d}{d\chi}\bE_{B,Z}\left[\bI\left(\left|\frac{B}{\hsig}+Z\right|>\chi \right)\right] \nonumber \\
&=\frac{\partial}{\partial \chi}\bE_{B,Z}\left[\int_{\chi-\frac{B}{\hsig}}^{\infty} \phi(z)dz+\int_{-\infty}^{-\chi-\frac{B}{\hsig}}\phi(z)dz\right]\nonumber \\ &~~~~+\frac{\partial}{\partial \hsig}\bE_{B,Z}\left[\int_{\chi -\frac{B}{\hsig}}^{\infty} \phi(z)dz +\int_{-\infty}^{-\chi-\frac{B}{\hsig}}\phi(z)dz\right]\frac{d\hsig}{d\chi}  \nonumber \\  \displaybreak[0] &=-\overbrace{\frac{\bE_{B,Z}\left[\phi\left(\chi-\frac{B}{\hsig}\right)+\phi\left(\chi+\frac{B}{\hsig}\right)\right]\frac{\delta\sigma_z^2}{\hsig^3}}{\frac{\delta\sigma_z^2}{\hsig^3}+\bE_{B,Z}\left[\frac{B^2}{\hsig^3}\left(\bI\left(\Big|\frac{B}{\hsig}+Z\Big|<\chi \right)\right)\right]}}^{\Lambda_1}\nonumber \\&~~~~-\overbrace{\frac{\bE_{B,Z}\left[\phi\left(\chi-\frac{B}{\hsig}\right)+\phi\left(\chi+\frac{B}{\hsig}\right)\right] \bE_{B,Z}\left[\frac{B^2}{\hsig^3}\left(\bI\left(\Big|\frac{B}{\hsig}+Z\Big|<\chi \right)\right)\right]}{\frac{\delta\sigma_z^2}{\hsig^3}+\bE_{B,Z}\left[\frac{B^2}{\hsig^3}\left(\bI\left(\Big|\frac{B}{\hsig}+Z\Big|<\chi \right)\right)\right]}}^{\Lambda_2} \nonumber \\ \displaybreak[0] &~~~~+\overbrace{\frac{\bE_{B,Z}\left[\frac{B}{\hsig^2}\left(-\phi\left(\chi -\frac{B}{\hsig}\right)+\phi\left(\chi+\frac{B}{\hsig}\right)\right)\right]\chi\left(\int_{\chi-\frac{B}{\hsig}}^{\infty}\phi(z)dz+\int_{-\infty}^{-\chi-\frac{B}{\hsig}}\phi(z)dz\right)}{\frac{\delta\sigma_z^2}{\hsig^3}+\bE_{B,Z}\left[\frac{B^2}{\hsig^3}\left(\bI\left(\Big|\frac{B}{\hsig}+Z\Big|<\chi \right)\right)\right]}}^{\Lambda_3}\nonumber \\ \displaybreak[0] &~~~~+\overbrace{\frac{\bE_{B,Z}\left[\frac{B}{\hsig^2}\left(-\phi\left(\chi-\frac{B}{\hsig}\right)+\phi\left(\chi+\frac{B}{\hsig}\right)\right)\right]\left(-\int_{\chi-\frac{B}{\hsig}}^{\infty}z\phi(z)dz+\int_{-\infty}^{-\chi-\frac{B}{\hsig}}z\phi(z)dz\right)}{\frac{\delta\sigma_z^2}{\hsig^3}+\bE_{B,Z}\left[\frac{B^2}{\hsig^3}\left(\bI\left(\Big|\frac{B}{\hsig}+Z\Big|<\chi \right)\right)\right]}}^{\Lambda_4}.
\end{align}
Considering the terms in (\ref{equ:DerivativeContinued}), we can write: (i) $\Lambda_1 \geq 0$, since all the terms in $\Lambda_1$ are positive. (ii) $\Lambda_3 \leq 0$, since for $B<0$ we have  $\phi\left(\frac{B}{\hsig}+\chi \right)>\phi\left(\frac{B}{\hsig}-\chi \right)$ and for $B>0$ we have $\phi\left(\frac{B}{\hsig}+\chi \right)<\phi\left(\frac{B}{\hsig}-\chi \right)$. (iii) For the other two terms, $\Lambda_2$ and $\Lambda_4$, we have 
\begin{align}\label{equ:Simplify2}
& \Lambda_2+\Lambda_4 \nonumber \\
\displaybreak[0]
&\displaybreak[0] = -\frac{\bE_{B}\left[\left(\phi(\chi-\frac{B}{\hsig})+\phi(\chi +\frac{B}{\hsig})\right)\left(\frac{B}{\hsig^2}\int_{-\chi-\frac{B}{\hsig}}^{\chi-\frac{B}{\hsig}}z\phi(z)dz+\frac{B^2}{\hsig^3}\int_{-\chi-\frac{B}{\hsig}}^{\chi-\frac{B}{\hsig}}\phi(z)dz\right)\right]}{\frac{\delta\sigma_z^2}{\hsig^3}+\bE_{B,Z}\left[\frac{B^2}{\hsig^3}\left(\bI\left(\Big|\frac{B}{\hsig}+Z\Big|<\chi\right)\right)\right]}  
\nonumber \\
 &=-\frac{\bE_{B}\left[\left(\phi(\chi-\frac{B}{\hsig})+\phi(\chi+\frac{B}{\hsig})\right)\left(\frac{B}{\hsig^2}\int_{-\chi-\frac{B}{\hsig}}^{\chi-\frac{B}{\hsig}}\left(z+\frac{B}{\hsig}\right)\phi(z)dz\right)\right]}{\frac{\delta\sigma_z^2}{\hsig^3}+\bE_{B,Z}\left[\frac{B^2}{\hsig^3}\left(\bI\left(\Big|\frac{B}{\hsig}+Z\Big|<\chi \right)\right)\right]}
\nonumber \\ 
 &=-\frac{\bE_{B}\left[\left(\phi(\chi-\frac{B}{\hsig})+\phi(\chi+\frac{B}{\hsig})\right)\overbrace{\left(\frac{B}{\hsig^2}\int_{-\chi}^{\chi}w\phi\left(w-\frac{B}{\hsig}\right)dz\right)}^{\Upsilon}\right]}{\frac{\delta\sigma_z^2}{\hsig^3}+\bE_{B,Z}\left[\frac{B^2}{\hsig^3}\left(\bI\left(\Big|\frac{B}{\hsig}+Z\Big|<\chi \right)\right)\right]}.
\end{align}
Regarding $\Upsilon$ in (\ref{equ:Simplify2}), for any $\epsilon \in [-\chi,0)$, If $B<0~(\text{or}~ B>0)$ then $\epsilon\left(\phi\left(\epsilon-\frac{B}{\hsig}\right)-\phi\left(-\epsilon-\frac{B}{\hsig}\right)\right)<0 ~\Big( \text{or}~\epsilon\left(\phi\left(\epsilon-\frac{B}{\hsig}\right)-\phi\left(-\epsilon-\frac{B}{\hsig}\right)\right)>0\Big)$. Therefore, $\Upsilon>0$ and consequently $\Lambda_2+\Lambda_4<0$. 

Putting (i), (ii), and (iii) together we conclude $\frac{d}{d\chi}\bP\left(\left|\frac{B}{\hsig}+Z\right| >0 \right)<0$.
\end{proof}

\begin{lemma}\label{lem:lambdataumon}
Let $(\chi, \hat{\sigma}, \lambda)$ satisfy \eqref{equ:FixedPoint1}. Then $\frac{d \lambda}{d \chi} >0$. 
\end{lemma}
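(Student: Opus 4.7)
The plan is to differentiate $\lambda = \chi\hat{\sigma}(1 - P/\delta)$ with respect to $\chi$, where $P = \mathbb{P}(|B/\hat{\sigma}+Z| > \chi)$ and $\hat{\sigma}$ depends on $\chi$ through \eqref{equ:FixedPoint1}. The chain rule yields
\begin{align*}
\frac{d\lambda}{d\chi} = \hat{\sigma}\Big(1-\tfrac{P}{\delta}\Big) \; - \; \frac{\chi\hat{\sigma}}{\delta}\,\partial_\chi P \; + \; \Big[\chi\Big(1-\tfrac{P}{\delta}\Big) - \frac{\chi\hat{\sigma}}{\delta}\,\partial_{\hat{\sigma}} P\Big]\frac{d\hat{\sigma}}{d\chi},
\end{align*}
where $\partial_\chi P$ and $\partial_{\hat{\sigma}} P$ are the partial derivatives at fixed $\hat\sigma$ and fixed $\chi$ respectively, precisely the quantities computed in Steps i--ii of the preceding proof.

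First I would dispose of the easy pieces. The active-set bound in Theorem \ref{lem:activeset} gives $P \leq \delta$, so $1 - P/\delta \geq 0$; in particular the first term is non-negative. Step ii has already shown $\partial_\chi P < 0$, making the second term strictly positive. A pointwise-in-$B$ inspection of the Step i formula shows $\partial_{\hat{\sigma}}P \leq 0$: for $B>0$, $\phi(B/\hat\sigma+\chi) < \phi(B/\hat\sigma-\chi)$, while for $B<0$ the inequality reverses, so the integrand $(B/\hat\sigma^{2})\big(\phi(B/\hat\sigma+\chi)-\phi(B/\hat\sigma-\chi)\big)$ is non-positive for every $B$. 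Hence the bracket multiplying $d\hat{\sigma}/d\chi$ is also $\geq 0$.

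The only obstacle is that $d\hat\sigma/d\chi$ itself is not sign-definite (it is negative when $B\equiv 0$, positive when $|B|$ is typically large). To handle this, I would substitute the closed-form expression from Step iii, whose denominator $2\hat\sigma\delta - \partial_{\hat\sigma}R$ is strictly positive---this is exactly the strict inequality $\Psi'(\hat\sigma^{2})<1$ at the unique fixed point, guaranteed by the concavity of $\Psi$ (Lemma \ref{lem:convaceRisk}) together with Lemma \ref{lem:uniquefixedpointconc}. I would then combine with the Stein-type identity
\begin{align*}
\partial_\chi R \;=\; 2\chi\hat\sigma^{2} P + 2\hat\sigma^{2}\,\partial_\chi P,
\end{align*}
which is obtained by writing $R = \mathbb{E}[\min(U^{2},\chi^{2}\hat\sigma^{2})] + \hat\sigma^{2}(2P-1)$ with $U=B+\hat\sigma Z$ and applying Stein's lemma to the cross term. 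After clearing the common positive denominator, the hard computational step is to verify that the potentially negative contribution from $2\hat\sigma^{2}\,\partial_\chi P$ inside $\partial_\chi R$ is dominated by the strictly positive term $-(\chi\hat\sigma/\delta)\,\partial_\chi P\cdot(2\hat\sigma\delta - \partial_{\hat\sigma}R)$ that came from the direct derivative---a cancellation that isolates a manifestly non-negative remainder and yields $d\lambda/d\chi > 0$.
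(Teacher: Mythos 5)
Your chain-rule expansion is correct, and the signs you assign to the easy pieces all check out: $1-P/\delta>0$ (though this follows directly from $\lambda>0$ in \eqref{equ:FixedPoint1}, so there is no need to route it through Theorem \ref{lem:activeset}, whose ``$\leq\delta$'' claim is in fact derived \emph{inside} the proof of this lemma), $\partial_\chi P<0$ from Step ii, and $\partial_{\hsig}P\le 0$ by the pointwise comparison of $\phi(B/\hsig+\chi)$ and $\phi(B/\hsig-\chi)$. But the argument is not complete: the one case that makes the lemma nontrivial --- $d\hsig/d\chi<0$, where your non-negative bracket multiplies a negative quantity --- is precisely the step you defer to ``the hard computational step \ldots\ a cancellation that isolates a manifestly non-negative remainder.'' That cancellation \emph{is} the content of the lemma; asserting that it will work out is not a proof. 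Your grouping also makes the task harder than necessary: by folding $\partial_{\hsig}P\,\frac{d\hsig}{d\chi}$ into the bracket rather than recombining it with $\partial_\chi P$ into the total derivative $\frac{d}{d\chi}\bP\left(\left|\frac{B}{\hsig}+Z\right|>\chi\right)$, you forgo the one strong fact already in hand, namely Lemma \ref{lem:detovertau}, and are left needing to control $\chi(1-P/\delta)\frac{d\hsig}{d\chi}$ with no stated bound on how negative $\frac{d\hsig}{d\chi}$ can be.

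The paper closes exactly this gap by a different regrouping: $\frac{d\lambda}{d\chi}=\frac{d(\chi\hsig)}{d\chi}\left(1-\frac{P}{\delta}\right)-\chi\hsig\frac{dP}{d\chi}$, where the second term is strictly positive by Lemma \ref{lem:detovertau}, and the first is handled by proving the separate inequality $\frac{d(\chi\hsig)}{d\chi}=\hsig+\chi\frac{d\hsig}{d\chi}\ge 0$, which quantifies how negative $\frac{d\hsig}{d\chi}$ can be. That last inequality is where the real work lies: one substitutes \eqref{equ:FinalDer} for $\frac{d\hsig}{d\chi}$ and then uses the fixed-point (state-evolution) identity, rewritten as \eqref{equ:FixedPointSimp}, to eliminate the risk integrals, arriving at $\Delta_1+\Delta_2$ with $\Delta_1>0$ (again because $\lambda>0$ forces $\delta>\bP(|B/\hsig+Z|>\chi)$) and $\Delta_2>0$ (another pointwise density comparison). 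Your proposal contains no analogue of this back-substitution of the fixed-point equation, and without it the claimed domination of the negative contribution is unverified. To repair the argument, either carry out the $\frac{d(\chi\hsig)}{d\chi}\ge 0$ computation (after which your expansion reduces to the paper's) or produce an explicit lower bound on $\frac{d\hsig}{d\chi}$; as written, the conclusion does not follow from what you have established.
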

\begin{proof}
 Taking the derivative from both sides of (\ref{equ:FixedPoint1}) with respect to $\chi$ yields
\begin{align}
\frac{d\lambda}{d\chi}=\underbrace{\frac{d(\chi \hsig)}{d\chi}\left(1-\frac{1}{\delta}\bP(|B+\hat{\sigma}Z|> \chi \hat{\sigma} )\right)}_{\Theta_1}-\underbrace{\chi\hsig\frac{d}{d\chi}\bP(|B+\hat{\sigma}Z|>\chi\hat{\sigma})}_{\Theta_2}.
\end{align}

According to Lemma \ref{lem:detovertau}, $\Theta_2<0$. Regarding $\Theta_1$, since $\lambda \geq 0$ satisfies (\ref{equ:FixedPoint1}), \newline $\left(1-\frac{1}{\delta}\bP(|B+\hat{\sigma}Z|>\chi \hat{\sigma})\right)>0$, and hence it is sufficient to prove that $\frac{d(\chi\hsig)}{d\chi}\geq 0 $. We have 
\begin{align}\label{equ:DerTauSigma}
&\frac{d(\chi \hsig)}{d\chi}=\hsig+\chi \frac{d\hsig}{d\chi}\nonumber \\ &\overset{(a)}{=} \hsig+\chi\frac{E_{B,Z}\left[\left(\eta\left(\frac{B}{\hsig}+Z;\chi \right)-\frac{B}{\hsig}\right)\left(-\bI\left(\frac{B}{\hsig}+Z>\chi \right)+\bI\left(\frac{B}{\hsig}+Z<-\chi \right)\right)\right]}{\frac{\delta\sigma_z^2}{\hsig^3}+\bE_{B,Z}\left[\frac{B^2}{\hsig^3}\left(\bI\left(\Big|\frac{B}{\hsig}+Z\Big|<\chi \right)\right)\right]} 
\nonumber \\ &=\frac{\frac{\delta\sigma_z^2}{\hsig^2}+\bE_{B,Z}\left[\frac{B^2}{\hsig^2}\left(\bI\left|\frac{B}{\hsig}+Z\right|<\chi \right)\right]}{\frac{\delta\sigma_z^2}{\hsig^3}+\bE_{B,Z}\left[\frac{B^2}{\hsig^3}\left(\bI\left(\Big|\frac{B}{\hsig}+Z\Big|<\chi \right)\right)\right]}
\displaybreak[0]
 \nonumber \\ &~~~~+\frac{\bE_{B}\left[-\int_{\chi-\frac{B}{\hsig}}^{\infty}(z-\chi)\phi(z)dz+\int_{-\infty}^{-\frac{B}{\hsig}-\chi }(z+\chi)\phi(z)dz\right]}{\frac{\delta\sigma_z^2}{\hsig^3}+\bE_{B,Z}\left[\frac{B^2}{\hsig^3}\left(\bI\left(\Big|\frac{B}{\hsig}+Z\Big|<\chi \right)\right)\right]},
\end{align}
where Equality (a) is due to \eqref{equ:FinalDer}. In order to simplify (\ref{equ:DerTauSigma}), we use (\ref{equ:FixedPoint1}) again
\begin{align}\label{equ:FixedPointSimp}
\delta&=\frac{\sigma_z^2 \delta}{\hsig^2}+\bE_{B,Z}\left[\left(\eta\left(\frac{B}{\hsig}+Z;\chi \right)-\frac{B}{\hsig}\right)^2\right] \nonumber \\
&= \frac{\sigma_z^2 \delta}{\hsig^2}+\bE_{B,Z}\left[\frac{B^2}{\hsig^2}\bI\left(\left|\frac{B}{\hsig}+Z\right|<\chi \right)\right] \nonumber \\ &~~~~ +\bE_{B}\left[\int_{\chi-\frac{B}{\hsig}}^{\infty}(z-\chi )^2\phi(z)dz \right] +\bE_B \left[\int_{-\infty}^{-\chi-\frac{B}{\sigma}}(z+\chi)^2\phi(z)dz\right] 
\nonumber \\ 
&= \frac{\sigma_z^2 \delta}{\hsig^2}+\bE_{B,Z}\left[\frac{B^2}{\hsig^2}\bI\left(\left|\frac{B}{\hsig}+Z\right|<\chi \right)\right] \nonumber \\
&~~~~+\beta \bE_{B}\left[-\int_{\chi-\frac{B}{\hsig}}^{\infty}(z-\chi)\phi(z)dz+\int_{-\infty}^{-\frac{B}{\hsig}-\chi}(z+\chi)\phi(z)dz\right] \nonumber \\&~~~~+\bE_{B}\left[\int_{\chi-\frac{B}{\hsig}}^{\infty}z(z-\beta)\phi(z)dz+\int_{-\infty}^{-\frac{B}{\hsig}-\chi}z(z+\chi)\phi(z)dz\right].
\end{align}
Thus, we can rewrite (\ref{equ:DerTauSigma}) as
\allowdisplaybreaks
\begin{align}
\frac{d(\chi\hsig)}{d\chi}&=\frac{\delta-\bE_{B}\left[\int_{\chi-\frac{B}{\hsig}}^{\infty}z(z-\chi)\phi(z)dz+\int_{-\infty}^{-\frac{B}{\hsig}-\chi}z(z+\chi)\phi(z)dz\right]}{\frac{\delta\sigma_z^2}{\hsig^3}+\bE_{B,Z}\left[\frac{B^2}{\hsig^3}\left(\bI\left(\Big|\frac{B}{\hsig}+Z\Big|<\chi \right)\right)\right]} \nonumber \\ &=  
\frac{\delta-\bE_{B}\left[\int_{\chi -\frac{B}{\hsig}}^{\infty}z^2\phi(z)dz+\int_{-\infty}^{-\chi -\frac{B}{\hsig}}z^2\phi(z)dz\right]}{\frac{\delta\sigma_z^2}{\hsig^3}+\bE_{B,Z}\left[\frac{B^2}{\hsig^3}\left(\bI\left(\Big|\frac{B}{\hsig}+Z\Big|<\chi \right)\right)\right]} \nonumber \\ &~~~~+\frac{-\chi \bE_B \left[-\int_{\chi-\frac{B}{\hsig}}^{\infty}z\phi(z)dz+\int_{-\infty}^{-\chi-\frac{B}{\hsig}}z\phi(z)dz\right]}{\frac{\delta\sigma_z^2}{\hsig^3}+\bE_{B,Z}\left[\frac{B^2}{\hsig^3}\left(\bI\left(\Big|\frac{B}{\hsig}+Z\Big|<\chi \right)\right)\right]} \nonumber \\& = \frac{\delta-\bE_B \left[\left(\chi-\frac{B}{\hsig}\right)\phi\left(\chi-\frac{B}{\hsig}\right)+\left(\chi+\frac{B}{\hsig}\right)\phi\left(\chi +\frac{B}{\hsig}\right)\right]}{\frac{\delta\sigma_z^2}{\hsig^3}+\bE_{B,Z}\left[\frac{B^2}{\hsig^3}\left(\bI\left(\Big|\frac{B}{\hsig}+Z\Big|<\chi \right)\right)\right]} \nonumber \\ &~~~~+\frac{\bE_B \left[\int_{\chi -\frac{B}{\hsig}}^{\infty}\phi(z)dz+\int_{-\infty}^{-\chi-\frac{B}{\hsig}}\phi(z)dz\right]}{\frac{\delta\sigma_z^2}{\hsig^3}+\bE_{B,Z}\left[\frac{B^2}{\hsig^3}\left(\bI\left(\Big|\frac{B}{\hsig}+Z\Big|<\chi \right)\right)\right]} \nonumber \\&~~~~+\frac{\chi \bE_{B}\left[\phi\left(\chi-\frac{B}{\hsig}\right)+\phi\left(\chi+\frac{B}{\hsig}\right)\right]}{\frac{\delta\sigma_z^2}{\hsig^3}+\bE_{B,Z}\left[\frac{B^2}{\hsig^3}\left(\bI\left(\Big|\frac{B}{\hsig}+Z\Big|<\chi \right)\right)\right]} \nonumber \\ &=\overbrace{\frac{\delta-\bE_{B,Z}\left[\bI \left(\left|\frac{B}{\hsig}+Z\right|>\chi \right)\right]}{\frac{\delta\sigma_z^2}{\hsig^3}+\bE_{B,Z}\left[\frac{B^2}{\hsig^3}\left(\bI\left(\Big|\frac{B}{\hsig}+Z\Big|<\chi \right)\right)\right]}}^{\Delta_1}\nonumber \\ &~~~~+ \overbrace{\frac{\bE_{B}\left[\frac{B}{\hsig}\left(\phi\left(\chi-\frac{B}{\hsig}\right)-\phi\left(\chi+\frac{B}{\hsig}\right)\right)\right] }{\frac{\delta\sigma_z^2}{\hsig^3}+\bE_{B,Z}\left[\frac{B^2}{\hsig^3}\left(\bI\left(\Big|\frac{B}{\hsig}+Z\Big|<\chi \right)\right)\right]}}^{\Delta_2}.
\end{align}
$\Delta_1>0$ due to the fact that $\lambda>0$ in (\ref{equ:FixedPoint1}). Furthermore, If $B<0~ \big( \text{or}~ B>0 \big)$, then $\phi\left(\chi-\frac{B}{\hsig}\right)<\phi\left(\chi +\frac{B}{\hsig}\right)~\Big( \text{or}~\phi\left(\chi-\frac{B}{\hsig}\right)>\phi\left(\chi +\frac{B}{\hsig}\right)\Big)$.
Therefore $\Delta_2>0$ and hence, $\frac{d\lambda}{d\chi}>0$.
Applying \newline $\frac{d}{d\chi}\bP\left(\left|\frac{B}{\hsig}+Z\right| >0 \right)<0$ and $\frac{d\lambda}{d\chi}>0$ into (\ref{equ:ChainRule}) completes the proof. Also, combining Theorem \ref{thm:lassodetstate} and the fact that $\lambda$ is positive in \eqref{eq:fixedpoint21} results in $\lim_{p \rightarrow \infty} \frac{1}{p} \sum_i \mathbb{I} \left(\beta^{\lambda}_i(p) \neq 0 \right) \leq \delta.$
\end{proof}

\subsection{Proof of Theorem \ref{thm:quasiconvex}} \label{sec:proofquasiconvex}

First note that, according to Theorem \ref{thm:eqpseudolip}, we have $\lim_{p \rightarrow \infty} \frac{1}{p} \|\hat{\beta}^{\lambda}(p)- \beta_o \|_2^2 = \mathbb{E} _{B,Z}\left[(\eta(B + \hsig Z; \chi \hsig) - B)^2\right]$, where $(\chi, \hsig, \lambda)$ satisfies
\begin{align}\label{equ:FixedPoint}
&\hat{\sigma}^2=\sigma_z^2+\frac{1}{\delta}\bE_{B,Z}\left[(\eta(B+\hat{\sigma}Z; \chi \hat{\sigma})-B)^2\right], \nonumber \\
& \lambda=\chi \hat{\sigma}\left(1-\frac{1}{\delta}\bP(|B+\hat{\sigma}Z|>\chi \hat{\sigma})\right).
\end{align}
Clearly, the quasi-convexity of $\mathbb{E}\left[(\eta(B+ \hsig W; \chi \hsig) - B)^2\right]$ in terms of $\lambda$ is equivalent to the quasi-convexity of $\hsig^2$ in terms of $\lambda$ ( see Lemma \ref{lem:quascvx} for more information). Therefore, in the rest of the proof, our goal is to prove that $\hsig^2$ is a quasi-convex function of $\lambda$. 

$\hsig^2$ is a differentiable function of $\lambda$. Therefore, according to Theorem \ref{thm:quasiconvexnessuf}, $\hsig^2$ is a quasi-convex function of $\lambda$ if and only if $\frac{d\hsig^2}{d\lambda}$ has at most one sign change from negative to positive. We have $\frac{d\hsig^2}{d\lambda}=\frac{d\hsig^2}{d\chi}\frac{d\chi}{d\lambda}$. Hence, the rest of the proof involves two main steps: (i) $\frac{d \chi}{d \lambda} >0$. We have proved that this is true in Lemma \ref{lem:lambdataumon}. (ii) $\frac{d\hsig^2}{d\chi}$ has exactly one sign change from negative to positive. According to (\ref{equ:FinalDer}) we have 
\begin{align}
\frac{d\hsig^2}{d\chi }=\frac{\frac{\partial}{\partial \chi} \bE_{B,Z} \left[\left(\eta \left (\frac{B}{\hsig}+Z;\chi \right)-\frac{B}{\hsig} \right) ^2\right]}{\frac{\delta\sigma_z^2}{\hsig^3}+\bE_{B,Z}\left[\frac{B^2}{\hsig^3}\left(\bI\left(\Big|\frac{B}{\hsig}+Z\Big|<\chi \right)\right)\right]}.
\end{align}
As a result, the sign of $\frac{d\hsig^2}{d\chi}$ is the same as sign of $\frac{\partial}{\partial \chi } \bE \left[\left(\eta \left (\frac{B}{\hsig}+Z;\chi \right)-\frac{B}{\hsig} \right) ^2\right]$. However, this term is the derivative of the risk of the soft thresholding function with respect to its second parameter. This is what we proved above in Lemma \ref{lem:quasiconvexsoft}. This, combined with the fact that $d\chi/d \lambda <0$, proves that $ d \hat{\sigma}^2/ d \lambda$ has exactly one sign change.

\subsection{Proof of Theorem \ref{thm:stepwiseoptimal}} \label{proof:greedisgood}
We start with the following simple but useful lemma that will be used in our proofs multiple times.
\vspace{-.2cm}
\begin{lemma}\label{thm:SURE} 
\citep{St81Es}
If $g: \mathbb{R} \rightarrow \mathbb{R}$ is a weakly differentiable function and $W \sim N(0,1)$, then 
\begin{align}
\mathbb{E} (g(B_o+ \sigma W; \tau) -B_o)^2 &= \mathbb{E}  (g(B_o+ \sigma W; \tau) -B_o- \sigma W)^2 +  \sigma^2 \nonumber \\  &~~~~+ 2\sigma^2 \mathbb{E}  (g'(B_o+ \sigma W; \tau) -1) \nonumber
\end{align}
\end{lemma}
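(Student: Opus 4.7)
The statement is precisely the integration-by-parts identity underlying Stein's Unbiased Risk Estimate, so the plan is essentially a direct computation using Stein's lemma.

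The first step is to split the difference as $g(B_o+\sigma W;\tau) - B_o = \bigl(g(B_o+\sigma W;\tau) - B_o - \sigma W\bigr) + \sigma W$ and expand the square. Using $W \sim N(0,1)$ independent of $B_o$ (so $\mathbb{E}[W^2]=1$ and $\mathbb{E}[W B_o]=0$), the cross terms simplify to
\begin{equation*}
\mathbb{E}\bigl(g-B_o\bigr)^2 \;=\; \mathbb{E}\bigl(g - B_o - \sigma W\bigr)^2 \;-\; \sigma^2 \;+\; 2\sigma\,\mathbb{E}\!\left[W\, g(B_o+\sigma W;\tau)\right],
\end{equation*}
where I have suppressed the arguments of $g$ for readability.

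The second step is to apply Stein's lemma to the remaining cross term. Conditioning on $B_o$ and using that $h(w) := g(B_o+\sigma w;\tau)$ is weakly differentiable in $w$ with $h'(w) = \sigma\, g'(B_o+\sigma w;\tau)$, the standard Gaussian integration-by-parts identity $\mathbb{E}[W h(W)] = \mathbb{E}[h'(W)]$ yields
\begin{equation*}
\mathbb{E}\!\left[W\, g(B_o+\sigma W;\tau)\,\big|\,B_o\right] \;=\; \sigma\,\mathbb{E}\!\left[g'(B_o+\sigma W;\tau)\,\big|\,B_o\right].
\end{equation*}
Taking expectation in $B_o$ and substituting back gives the claimed identity, since $-\sigma^2 + 2\sigma^2 \mathbb{E}[g'] = \sigma^2 + 2\sigma^2\mathbb{E}[g'-1]$.

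There is no substantive obstacle here; the only subtlety is ensuring the integration-by-parts is legitimate under mere weak differentiability of $g$. This is the usual content of Stein's original proof (which is why the lemma is quoted from \citep{St81Es}): one verifies that $g(B_o+\sigma w;\tau)\phi(w) \to 0$ as $|w|\to\infty$ and that $w\mapsto g(B_o+\sigma w;\tau)$ is absolutely continuous on compacts so that the fundamental theorem of calculus applies in the form $\int g'(B_o+\sigma w;\tau)\sigma \phi(w)\,dw = \int g(B_o+\sigma w;\tau)\, w\, \phi(w)\,dw$ after an integration by parts. Since the lemma is invoked later in the paper only for the soft-thresholding function (which is Lipschitz and therefore weakly differentiable with bounded derivative), these tail conditions hold trivially, and no further care is needed.
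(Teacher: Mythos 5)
Your derivation is correct and is the standard one: split off $\sigma W$, expand the square, and apply Gaussian integration by parts to the cross term, with the regularity caveat handled by noting the lemma is only ever invoked for (smoothed) soft thresholding, which is Lipschitz. The paper itself gives no proof of this lemma --- it is stated as a cited result from Stein (1981) --- so your write-up simply supplies the details the paper defers to the reference, and there is nothing to reconcile.
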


We call this result Stein's lemma in this paper.  We first prove one of the main features of the risk function defined in \eqref{equ:BayesRisk}
\vspace{-.2cm}
 \begin{lemma} \label{lem:inftau}
 If $\mathbb{P}(B \neq 0) \neq 0$, then $\inf_{\tau} R_{B} (\sigma, \tau; p_\beta) $ is an increasing function of $\sigma$. 
 \end{lemma}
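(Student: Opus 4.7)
The plan is to combine Lemma~\ref{lem:convaceRisk} with the preservation of concavity under pointwise infimum, and then to extract monotonicity from concavity together with natural boundary conditions on the risk.

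First I would reparametrize the threshold via $\chi = \tau/\sigma$: for $\sigma > 0$ this is a bijection of $[0, \infty)$ with itself, so
\[
\inf_{\tau \geq 0} R_B(\sigma, \tau; p_\beta) \;=\; \inf_{\chi \geq 0} R_B(\sigma, \chi \sigma; p_\beta).
\]
Setting $s = \sigma^2$, Lemma~\ref{lem:convaceRisk} tells us that for each fixed $\chi \geq 0$ the map $s \mapsto R_B(\sqrt{s}, \chi \sqrt{s}; p_\beta)$ is concave in $s$ (the stated $\Psi$ differs from this only by a positive affine transformation). The pointwise infimum of a family of concave functions is itself concave: writing $f_\chi(s) := R_B(\sqrt{s}, \chi\sqrt{s}; p_\beta)$ and $h(s) := \inf_\chi f_\chi(s)$, concavity of each $f_\chi$ gives $f_\chi(\lambda s_1 + (1-\lambda)s_2) \geq \lambda f_\chi(s_1) + (1-\lambda)f_\chi(s_2) \geq \lambda h(s_1) + (1-\lambda)h(s_2)$, and taking the infimum over $\chi$ on the left yields the concavity of $h$. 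Hence $h(s) = \inf_{\tau \geq 0} R_B(\sqrt{s}, \tau; p_\beta)$ is concave on $[0, \infty)$.

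Next I would verify two boundary facts. First, $h(0) = 0$: evaluating at $\sigma = 0$, $\tau = 0$ gives $R_B(0, 0; p_\beta) = \mathbb{E}[(B - B)^2] = 0$, and non-negativity of $R_B$ pins the infimum. Second, $h$ is bounded above by $\mathbb{E}[B^2] < \infty$: taking $\tau \to \infty$ makes $\eta \equiv 0$, so $R_B(\sigma, \infty; p_\beta) = \mathbb{E}[B^2]$, which is finite by the bounded second-moment condition in Definition~\ref{def:convseq}.

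The conclusion then drops out of an elementary fact: a concave function $h$ on $[0, \infty)$ with $h(0) = 0$ that is bounded below cannot be decreasing, for otherwise a negative one-sided derivative $h'(s_0) < 0$ would, by concavity, force $h(s) \leq h(s_0) + h'(s_0)(s - s_0) \to -\infty$, contradicting $h \geq 0$. Translating back, $\inf_\tau R_B(\sigma, \tau; p_\beta) = h(\sigma^2)$ is non-decreasing in $\sigma$. The main obstacle is mostly one of bookkeeping: Lemma~\ref{lem:convaceRisk} is stated for the parametrization $\tau = \chi \sigma$ in which $\tau$ is tied to $\sigma$, so the initial reparametrization is essential (direct concavity in $\sigma^2$ at fixed $\tau$ is not on the table). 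If the ``increasing'' in the statement is read strictly --- the hypothesis $\mathbb{P}(B\neq 0) \neq 0$ precisely excludes the trivial case $h \equiv 0$ --- one would additionally invoke Lemma~\ref{lem:amp:quasi}: the bowl-shaped risk at each finite $\sigma$ attains its minimum strictly below $\mathbb{E}[B^2]$, while $h(\sigma^2) \to \mathbb{E}[B^2]$ as $\sigma \to \infty$, so no plateau of the concave non-decreasing $h$ is possible and the monotonicity must be strict.
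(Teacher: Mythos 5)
Your argument for the non-strict monotonicity is correct and takes a genuinely different route from the paper. The paper fixes $\chi=\tau/\sigma$ and differentiates: it computes $\partial \bar{R}_B/\partial\sigma^2$ explicitly via Stein's lemma, shows that its limit as $\sigma\to\infty$ equals $2(1+\chi^2)Q(\chi)-2\chi\phi(\chi)>0$ using the bound $\frac{\chi}{1+\chi^2}\phi(\chi)<Q(\chi)$, and invokes the concavity of Lemma \ref{lem:convaceRisk} only to conclude that this derivative is decreasing in $\sigma^2$ and hence positive everywhere; the resulting strict monotonicity of each fixed-$\chi$ section is then pushed through the infimum by evaluating the $\sigma_1$-risk at the finite (by bowl-shapedness) minimizer $\bar{\chi}_2$ of the $\sigma_2$-risk. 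You replace the entire derivative computation with two soft facts --- the pointwise infimum of concave functions is concave, and a concave function on $[0,\infty)$ that is bounded below must be non-decreasing --- which is cleaner and delivers the non-strict statement essentially for free. Your initial reparametrization is indeed the right bookkeeping step, since Lemma \ref{lem:convaceRisk} is stated for thresholds tied to $\sigma$.

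However, the lemma is used in the proof of Theorem \ref{thm:stepwiseoptimal} to produce a \emph{strict} inequality between the resulting values of $\sigma^T$, so strict increase is what is actually needed, and your argument for strictness has a genuine gap: you assert $h(\sigma^2)=\inf_\tau R_B(\sigma,\tau;p_\beta)\to \mathbb{E}[B^2]$ as $\sigma\to\infty$ without proof. The upper bound $h\le \mathbb{E}[B^2]$ is clear, but the matching lower bound requires ruling out every sequence $\tau(\sigma)$: for thresholds $\tau=\chi\sigma$ with $\chi$ bounded the risk blows up like $\sigma^2$ on the non-thresholded event, while for $\tau/\sigma\to\infty$ the estimator degenerates to zero; one must quantify this trade-off (and, for unbounded $B$, add a truncation step) to pin the limit at exactly $\mathbb{E}[B^2]$. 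Concavity cannot rescue you here: a concave, non-decreasing, bounded function such as $\min(s,1)$ has a plateau, so without the limit identity nothing excludes the scenario in which $h$ is eventually constant at some value $L<\mathbb{E}[B^2]$ --- which is precisely the case you must eliminate. Either supply the limit computation, or fall back on the paper's device: show each fixed-$\chi$ section is strictly increasing in $\sigma$ (this is where the paper's derivative calculation earns its keep) and transfer strictness through the infimum via the attained minimizer.
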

 \vspace{-.4cm}
 
 \begin{proof}
 First, we prove this Lemma for the modified risk function defined as $\bar{R}_B(\sigma, \chi;p_\beta) \triangleq \bE_{W,B_o}\Big(\big(\eta(B_o+\sigma W;\sigma \chi)-B_o\big)^2\Big)$. Then, we will switch to the original risk function $R_B(\sigma, \tau;p_\beta)$. 

According to Lemma \ref{lem:convaceRisk}, the risk function $\bar{R}_B(\sigma, \chi;p_{\beta})$ is a concave function of $\sigma^2$. Therefore, the derivative of the risk function is a decreasing function itself. This means that if we prove that the derivative is positive when $\sigma \rightarrow \infty$, then for any fixed $\chi$ the risk function is an increasing function in terms of $\sigma$. Hence, we first prove that $\lim_{\sigma \rightarrow \infty} \frac{\partial \bar{R}_{B}(\sigma, \chi; p_\beta)}{\partial \sigma^2} \geq 0$.

\begin{align} \label{eq:DerSigma}
&\frac{\partial \bar{R}_{B}(\sigma, \chi; p_\beta)}{\partial \sigma^2} \nonumber \\
&=\frac{1}{2\sigma}\frac{\partial}{\partial \sigma} \bE_{W,B_o}\left(\left(\eta(B_o+\sigma W;\sigma \chi)-B_o\right)^2\right) \nonumber \\
\displaybreak[0]
&= \frac{1}{2\sigma}\bE_{W,B_o} \Big(2\left(\eta(B_o+\sigma W;\sigma \chi)-B_o\right)((W-\chi)\bI\left(B_o+\sigma W>\sigma \chi \right) \nonumber \\
\displaybreak[0]
&\quad\quad\quad+(W+\chi)\bI\left(B_o+\sigma W<-\sigma \chi \right))\Big) \nonumber \\
&= \bE_{W,B_o} \Big( ( W- \chi)^2 \bI(B_o+\sigma W>\sigma \chi)\nonumber \\
&\quad\quad\quad+( W+ \chi)^2 \bI(B_o+\sigma W<-\sigma \chi)\Big) \nonumber \\
\displaybreak[0]
&\overset{(a)}{=}\bE_{W,B_o} \Big( \sigma W \delta(B_o+\sigma W- \sigma \chi)+ \bI(B_o+\sigma W> \sigma \chi) \nonumber \\
&\quad\quad\quad\quad-2\sigma\chi  \delta(B_o+\sigma W-\sigma \chi) +\tau^2 \bI(B_o+\sigma W> \sigma chi)\nonumber \\
&\quad\quad\quad\quad  - \sigma W  \delta(B_o+\sigma W+\sigma \chi)+  \bI(B_o+\sigma W< -\sigma \chi) \nonumber \\
\displaybreak[0]
&\quad\quad\quad\quad + \chi^2  \bI(B_o+\sigma W<-\sigma \chi)-2\sigma \chi  \delta(B_o+\sigma W+\sigma \chi) \Big) \nonumber \\
&= \bE_{B_o} \left( \left(\frac{\sigma \chi - B_o}{\sigma}\right) \phi\left(\frac{\sigma \chi -B_o}{\sigma}\right)+ \left(\frac{\sigma \chi + B_o}{\sigma}\right) \phi\left(\frac{\sigma \chi+B_o}{\sigma}\right)\right) \nonumber \\
&\quad+ \bE_{B_o,W} \left((1+\chi^2)(\bI(B_o+\sigma W>\sigma \chi)+\bI(B_o+\sigma W<-\sigma \chi)\right) \nonumber \\
&\quad -\bE_{B_o}\left(2 \chi \left(\phi\left(\frac{\sigma \chi-B_o}{\sigma}\right)+\phi\left(\frac{\sigma \chi+B_o}{\sigma}\right)\right)\right) \nonumber \\
\displaybreak[0]
&=(1+\chi^2) \bE_{B_o} \left(\Phi\left(\frac{B_o}{\sigma}-\chi\right)+\Phi\left(\frac{-B_o}{\sigma}-\chi\right)\right) \nonumber \\
&\quad -\chi \bE_{B_o}  \left(\phi\left(\frac{B_o}{\sigma}-\chi \right)+\phi\left(\frac{B_o}{\sigma}+\chi \right)\right) \nonumber \\
&\quad -\frac{1}{\sigma}  \bE_{B_o} \left( B_o \left(\phi\left(\frac{B_o}{\sigma}-\chi \right)-\phi\left(\frac{B_o}{\sigma}+\chi \right)\right)\right),
\end{align}
where (a) holds because of the Stein's lemma (Lemma \ref{thm:SURE}). As a result of \eqref{eq:DerSigma}, we can write
\begin{align}
\displaybreak[0]
&\lim_{\sigma \rightarrow \infty} \frac{\partial R_B(\sigma, \chi ; p_{\beta})}{\partial \sigma^2} \nonumber \\
&\displaybreak[0]= \lim_{\sigma \rightarrow \infty} \Bigg((1+\chi^2) \bE_{B_o} \left(\Phi\left(\frac{B_o}{\sigma}-\chi \right)+\Phi\left(\frac{-B_o}{\sigma}-\chi \right)\right) \nonumber \\
&\quad \quad\quad-\chi \bE_{B_o}  \left(\phi\left(\frac{B_o}{\sigma}-\chi \right)+\phi\left(\frac{B_o}{\sigma}+\chi \right)\right) -\frac{1}{\sigma}  \bE_{B_o} \left( B_o \left(\phi\left(\frac{B_o}{\sigma}-\chi \right)-\phi\left(\frac{B_o}{\sigma}+\chi \right)\right)\right) \Bigg) \nonumber \\
&= 2(1+\chi ^2) \Phi(-\chi)-2 \chi  \phi(\chi) \overset{(b)}{=} 2(1+\chi^2) Q(\chi)-2 \chi  \phi(\chi) \overset{(c)}{>} 2 \chi  \phi(\chi)-2 \chi  \phi(\chi) =0,
\end{align}
where in (b), $Q(\chi) \triangleq \int_{\chi}^{\infty}\phi(w)dw$ and in (c) we have used the well-known lower-bound for the Q-function $\left(\frac{\chi}{1+\chi^2}\right)\phi(\chi)<Q(\chi)$.
Now, by using the fact that \newline $\bar{R}_B(\sigma, \chi;p_\beta)=\bE_{W,B_o}\left(\left(\eta(B_o+\sigma W;\sigma \chi)-B_o\right)^2\right)$ is an increasing function of $\sigma$ for any fixed $\chi$, if $\sigma_1<\sigma_2$, then we can write 
\begin{align}\label{eq:ineq}
\bE_{W,B_o} \left(\eta \left(B_o+\sigma_1 W;\sigma_1 \chi\right)-B_o\right)^2 <\bE_{W,B_o}\left(\eta(B_o+\sigma_2 W;\sigma_2 \chi)-B_o\right)^2.
\end{align}
We can take the infimum from both sides of the inequality in \eqref{eq:ineq} and obtain
\begin{align}\label{eq:inf}
&\inf_{\chi} \bE_{W,B_o} \left(\eta \left(B_o+\sigma_1 W;\sigma_1 \chi\right)-B_o\right)^2  < \inf_{\chi} \bE_{W,B_o} \left(\eta(B_o+\sigma_2 W;\sigma_2 \chi)-B_o\right)^2.
\end{align}
Note that in general when we take infimum from both sides the $<$ may change to $\leq$. However, since $\mathbb{P} (B \neq 0) \neq 0$, according to Lemma \ref{lem:quasiconvexsoft}, $\bE_{W,B_o} \left(\eta \left(B_o+\sigma_2 W;\sigma_2 \chi\right)-B_o\right)^2$ is a bowl shaped function of $\chi$ and hence $\inf_{\chi} \bE_{W,B_o} \left(\eta \left(B_o+\sigma_2 W;\sigma_2 \chi\right)-B_o\right)^2$ is achieved at a finite value of $\bar{\chi}_2$. According to \eqref{eq:ineq}, $\bE_{W,B_o} \left(\eta \left(B_o+\sigma_1 W;\sigma_1 \bar{\chi}_2\right)-B_o\right)^2$ is strictly smaller than \newline $\inf_{\chi} \bE_{W,B_o} \left(\eta \left(B_o+\sigma_2 W;\sigma_2 \chi\right)-B_o\right)^2$, and hence \eqref{eq:inf} is also correct with strict inequality. 
Let $\tau=\sigma \chi$ and suppose that $\tau^*=\sigma_2 \chi^*$ is the threshold by which the infimum of the right hand side (RHS) of \eqref{eq:inf} is achieved. If we apply $\chi^*$ to the both sides of \eqref{eq:ineq}, then we can write \eqref{eq:ineq}
\begin{align}\label{eq:ineq2}
&\inf_{\tau} \bE_{W,B_o}\left(\left(\eta(B_o+\sigma_1 W;\tau)-B_o\right)^2\right) \nonumber \\
&=\inf_{\chi} \bE_{W,B_o}\left(\left(\eta(B_o+\sigma_1 W;\chi \sigma_1)-B_o\right)^2\right) \leq\bE_{W,B_o}\left(\left(\eta \left(B_o+\sigma_1 W;\left(\frac{\tau^*}{\sigma_2}\right)\sigma_1\right)-B_o\right)^2\right) \nonumber \\
&<\bE_{W,B_o}\left(\left(\eta(B_o+\sigma_2 W;\tau^*)-B_o\right)^2\right) = \inf_{\tau} \bE_{W,B_o}\left(\left(\eta(B_o+\sigma_2 W;\tau)-B_o\right)^2\right), 
\end{align}
which means that $\inf_{\tau}R_B(\sigma, \tau;p_\beta)$ is an increasing function of $\sigma$.
 \end{proof}
 

Having completed the proof of Lemma \ref{lem:inftau}, the proof of Theorem \ref{thm:stepwiseoptimal}  is performed using an induction argument. Suppose that $\tau^{*,1}, \tau^{*, 2}, \ldots, \tau^{*, T-1}$ is optimal for iteration $T$. Our goal is to show that $\tau^{*,1}, \tau^{*, 2}, \ldots, \tau^{*, T-2}$ is optimal for iteration $T-1$ as well. Now we use a contradiction argument. Suppose that $\tau^{*,1}, \tau^{*, 2}, \ldots, \tau^{*, T-2}$ is not optimal for iteration $T-1$; then there exists $\tau^{1}, \tau^2, \ldots, \tau^{T-2}$ such that 
\[
\sigma^{T-1} (\tau^1, \ldots, \tau^{T-2}) < \sigma^{T-1}(\tau^{*,1}, \tau^{*, 2}, \ldots, \tau^{*, T-2}). 
\]
Define $\tau^{**, T-1}$ as
\[ 
\arg \min_{\tau}  \mathbb{E}_{B_o, W} \left[(\eta(B_o + \sigma^{T-1} (\tau^1, \ldots, \tau^{T-2})  W; \tau) -B_o)^2\right].
\]
We can now prove that 
\[
\sigma^{T}(\tau^1, \ldots, \tau^{T-2}, \tau^{**, T-1}) < \sigma^T (\tau^{*,1}, \tau^{*,2}, \ldots, \tau^{*,T-1}).
\]
From Theorem \ref{thm:ampeqpseudo_lip} we have
\begin{equation}\label{eq:optimalstepwise}
(\sigma^{t+1})^2 = \sigma_{\omega}^2+\frac{1}{\delta} \mathbb{E}_{B_o, W} \left[(\eta(B_o + \sigma^t W; \tau^t ) -B_o)^2\right].
\end{equation}
Since, $\sigma^{T-1} (\tau^1, \ldots, \tau^{T-2}) < \sigma^{T-1}(\tau^{*,1}, \tau^{*, 2}, \ldots, \tau^{*, T-2})$, Lemma \ref{lem:inftau} combined with  \eqref{eq:optimalstepwise} prove that 
\[
\sigma^{T}(\tau^1, \ldots, \tau^{T-2},  \tau^{**, T-1}) < \sigma^T (\tau^{*,1}, \tau^{*,2}, \ldots, \tau^{*,T-1}).
\]
that contradicts the optimality of $\tau^{*,1}, \tau^{*,2}, \ldots, \tau^{*,T-1}$. Therefore, we conclude that if $\tau^{*,1}, \tau^{*, 2}, \ldots,$ $ \tau^{*, T-1}$ is optimal for iteration $T$, then it is optimal for every iteration $t<T$. The rest of the induction argument is similar and hence for the sake of brevity we skip it.

\subsection{Proof of Theorem \ref{cor:empconvopt}} \label{sec:proofuniform}

\subsubsection{Roadmap of the proof} \label{sec:proofroadmapconsistencyamp}
We break the rather long proof of this theorem into two main steps:
\begin{enumerate}
\item First, we prove that the risk estimate presented in  \eqref{eq:empriskdefamp} provides a consistent estimate of the risk $R_B(\sigma^t, \tau^; p_\beta)$. Since we would like to optimize the risk estimate over the parameter $\tau^t$, we require a uniform notion of consistency, i.e., $$\lim_{h \rightarrow 0} \lim_{p\rightarrow \infty}  \sup_{\tau^t \in \mathcal{T}} \left| \hat{R}_{h,p}^{t}(\tau^t) - R_B(\sigma^t, \tau^t; p_\beta)\right| = 0$$ in probability. Note that the convergence is uniform on $\mathcal{T}^t$. After discussing several useful lemmas we prove this claim in Theorem \ref{thm:amptune1}.  \\

\item Once we prove this claim we use the properties of the solution path of AMP, in particular Theorem \ref{thm:stepwiseoptimal}, to show the consistency of our parameter tuning scheme across $t$ iterations. 
\end{enumerate}

\subsubsection{Uniform convergence of the risk estimate}

We start with a few lemmas that will be later used to prove
$$\lim_{h \rightarrow 0} \lim_{p\rightarrow \infty}  \sup_{\tau^t \in \mathcal{T}} \left| \hat{R}_{h,p}^{t}(\tau^t) - R_B(\sigma^t, \tau^t; p_\beta)\right| = 0$$ in probability.

 Our first lemma is concerned with the pointwise (with respect to $\tau^t$) convergence of the risk estimate to $R_B(\sigma^t, \tau^t; p_\beta)$. 

\begin{lemma}\label{thm:amptune} Consider a converging sequence $\{\beta_o(p), X(p), w(p)\}$, and let the elements of $X$ be drawn iid from $N(0,1/n)$.Furthermore,  let ${\hat{R}_{h,p}^t(\tau^t)}$ denote the estimate of the risk at iteration $t$ of AMP as defined in \eqref{eq:empriskdefamp}. Then,
\begin{align}\label{eq:MSEAMP22}
\lim_{p\rightarrow \infty} {\hat{R}_{h,p}^{t}(\tau^t, \tau^{t-1}, \ldots, \tau^1}) = \bE_{B_o,W}\left[(\tilde{\eta}_h(B_o+\sigma^tW;\tau^t)-B_o)^2\right],
\end{align}
almost surely, where $B_o$ and $W$ are two random variables with distributions $p_\beta$ and $N(0,1)$, respectively and $\sigma^t$ satisfies \eqref{eq:ampevolution}.
\end{lemma}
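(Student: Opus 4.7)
My plan is to apply the AMP state evolution characterization to the pre-thresholded variable $\beta^t + X^* z^t$ rather than to $\beta^t$ itself. A standard byproduct of the proof of Theorem \ref{thm:ampeqpseudo_lip} (see \cite{BaMo10}) is that for any pseudo-Lipschitz $\psi: \mathbb{R}^2 \to \mathbb{R}$,
\begin{equation*}
\frac{1}{p}\sum_{i=1}^p \psi\bigl((\beta^t + X^* z^t)_i,\, \beta_{o,i}\bigr) \xrightarrow{a.s.} \mathbb{E}_{B,W}\bigl[\psi(B + \sigma^t W,\, B)\bigr],
\end{equation*}
where $B \sim p_\beta$ and $W \sim N(0,1)$ are independent and $\sigma^t$ is determined by the state evolution recursion \eqref{eq:ampevolution}. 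Informally, the pre-thresholded AMP iterate behaves component-wise like the signal plus Gaussian noise of variance $(\sigma^t)^2$, which is exactly the model SURE is designed for.

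Given this, I would split the risk estimate \eqref{eq:empriskdefamp} into three pieces. The additive constant $(\sigma^t)^2$ needs no work. For the squared-error piece I would take $\psi_1(u,v) = (\tilde{\eta}_h(u;\tau^t) - u)^2$; for the divergence piece, $\psi_2(u,v) = \tilde{\eta}_h'(u;\tau^t) - 1$. Summing the resulting three almost-sure limits yields
\begin{equation*}
\mathbb{E}\bigl[(\tilde{\eta}_h(B + \sigma^t W;\tau^t) - (B + \sigma^t W))^2\bigr] + (\sigma^t)^2 + 2(\sigma^t)^2\, \mathbb{E}\bigl[\tilde{\eta}_h'(B+\sigma^t W;\tau^t)-1\bigr],
\end{equation*}
which, by Stein's identity (Lemma \ref{thm:SURE}) applied to the weakly differentiable function $\tilde{\eta}_h(\,\cdot\,;\tau^t)$ at noise level $\sigma^t$, collapses to $\mathbb{E}\bigl[(\tilde{\eta}_h(B + \sigma^t W;\tau^t) - B)^2\bigr]$, which is exactly the right-hand side of \eqref{eq:MSEAMP22}.

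The main technical obstacle is checking that $\psi_1$ and $\psi_2$ meet the pseudo-Lipschitz hypothesis, and this is where the smoothing parameter $h$ earns its keep. Since $\eta(\,\cdot\,;\tau)$ is $1$-Lipschitz and $u \mapsto u - \eta(u;\tau)$ is uniformly bounded by $\tau$, both properties survive the convolution with the Gaussian kernel, so $\tilde{\eta}_h(u;\tau) - u$ is bounded and $\psi_1$ has a bounded derivative in $u$, hence is Lipschitz (and thus pseudo-Lipschitz of every order). For $\psi_2$, smoothing is essential: without it, $\eta'(\,\cdot\,;\tau)$ would be a step function with jumps at $\pm\tau$, but $\tilde{\eta}_h'(\,\cdot\,;\tau)$ is a smooth function with range in $[0,1]$ and derivative of order $1/h$, so $\psi_2$ is Lipschitz as well. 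Once these verifications are in hand, the three almost-sure convergences follow directly from the state evolution result above, and the claim follows by assembling them together with Stein's identity.
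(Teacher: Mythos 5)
Your proposal is correct and follows essentially the same route as the paper: decompose the SURE-type estimate into the squared-residual, constant, and divergence terms, apply the state-evolution/pseudo-Lipschitz convergence result of \cite{BaMo10} to the pre-thresholded iterate $\beta^t + X^*z^t$ (the paper phrases this via $b^t = \beta^t + X^*z^t - \beta_o$), and collapse the limit with Stein's lemma. The Lipschitz verifications you sketch for $\psi_1$ and $\psi_2$ match the paper's use of the bounded derivatives of $\tilde{\eta}_h$.
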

\begin{proof}
By applying Stein's lemma (Lemma \ref{thm:SURE}) to the right hand side of \eqref{eq:MSEAMP22}, we can rewrite it as
\begin{align}\label{eq:lemsteinrisk}
&\bE_{B_o,W}\left[(\tilde{\eta}_h(B_o+\sigma^tW;\tau^t)-B_o)^2\right] \nonumber \\
&~~~=\bE_{B_o,W}\left[(\tilde{\eta}_h(B_o+\sigma^tW;\tau^t)-(B_o+\sigma^tW))^2\right]+\left(\sigma^t\right)^2\nonumber \\
&~~~~+2\left(\sigma^t\right)^2\bE_{B_o,W}\left[(\tilde{\eta}'_h(B_o+\sigma^tW;\tau^t)-1)\right].
\end{align}
Similarly, we can decompose the left hand side (LHS) of \eqref{eq:MSEAMP22} to
\begin{align}
{\hat{R^t}(\tau^t , \tau^{t-1}, \ldots, \tau^1)}&=\frac{1}{p} \|\tilde{\eta}_h(\beta^t+X^*z^t;\tau^t)-(\beta^t+X^*z^t)\|_2^2+\left(\sigma^t\right)^2\nonumber \\
&~~~~+\frac{1}{p}2\left(\sigma^t\right)^2\left[ \mathbf{1}^T(\tilde{\eta}_h'(\beta^t+X^*z^t;\tau^t)-\mathbf{1})\right].
\end{align}
Let $X_{(:, i)}$ denote the $i^{\rm th}$ column of the matrix $X$. Define:
\begin{align}\label{eq:defbt}
b^t \triangleq \beta^t + X^* z^t-\beta_o.
\end{align}
Considering the following function
\begin{align}\label{eq:psi1}
\psi_1(b_i^{t},\beta_{o,i})&\triangleq \left(\tilde{\eta}_h(b_i^t +\beta_{o,i}  ;\tau^{t})-(b_i^t+ \beta_{o,i})\right)^2 =  \left(\tilde{\eta}_h(\beta_i^{t}+X_{(:,i)}^*z^{t};\tau^{t})-(\beta_i^{t}+X_{(:,i)}^*z^{t})\right)^2
\end{align}
It is straightforward to use Lemma 1 of \cite{BaMo10} to prove
\begin{align}\label{eq:psi1Asymp}
\lim_{p\rightarrow \infty}\frac{1}{p}\sum_{i=1}^p \psi_1(b_i^{t},\beta_{o,i})&=\bE\left[\psi_1(B_o+\sigma^tW;\tau^t),B_o)\right] \nonumber \\
&=\bE_{B_o,W}\left[(\tilde{\eta}_h(B_o+\sigma^tW;\tau^t)-(B_o+\sigma^tW))^2\right],
\end{align}
almost surely.  Furthermore, it is straightforward to note that the derivative of $\tilde{\eta}_h$ is bounded and hence we can employ the mean value theorem to show that $\tilde{\eta}_h$ is also a Lipschitz function and hence  by Lemma 1 of \cite{BaMo10} almost surely
\begin{align}\label{eq:psi2Asymp}
\lim_{p \rightarrow \infty} \frac{\mathbf{1}^T(\tilde{\eta}_h'(\beta^t+X^*z^t;\tau^t)-\mathbf{1}) }{p} = \mathbb{E} (\tilde{\eta}'_h(B_o+ \sigma^t W; \tau^t)-1).
\end{align}
Combining \eqref{eq:lemsteinrisk}, \eqref{eq:psi1Asymp} and \eqref{eq:psi2Asymp} finishes the proof. 
\end{proof}

Lemma \ref{thm:amptune} is only concerned with the pointwise convergence of the risk. The next theorem proves the uniform convergence. Define
\begin{equation}
R_A^t(\tau^t, \tau^{t-1}, \ldots, \tau^1) \triangleq R_B(\tau^t , \sigma^t; p_\beta), 
\end{equation}
where $\sigma^t$ is derived from the iterations of \eqref{eq:ampevolution}. Even though this new notation seems redundant, it will be useful in our proofs.

\begin{theorem}\label{thm:amptune1} Consider a converging sequence $\{\beta_o(p), X(p), w(p)\}$, and let the elements of $X$ be drawn iid from $N(0,1/n)$. Furthermore, let $\hat{R}_{h,p}^t(\tau^t, \tau^{t-1}, \ldots, \tau^1 )$ denote the estimate of the Bayes risk at iteration $t$ of AMP as defined in \eqref{eq:empriskdefamp}. Let $\mathcal{T}^t \subset \mathbb{R}$ denote a compact set. Then,
\begin{align}\label{eq:MSEAMP}
\lim_{h \rightarrow 0} \lim_{p\rightarrow \infty}  \sup_{\tau^t \in \mathcal{T}} \left| \hat{R}_{h,p}^{t}(\tau^t, \tau^{t-1}, \ldots, \tau^1) - R_A^t(\tau^t, \tau^{t-1}, \ldots, \tau^1)\right| = 0
\end{align}
 in probability, for every $\tau^1, \ldots, \tau^{t-1} \in \mathcal{T}^1 \times \ldots \times \mathcal{T}^{t-1}$.
\end{theorem}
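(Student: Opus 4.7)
The plan is to upgrade the pointwise convergence established in Lemma~\ref{thm:amptune} to uniform convergence via a standard covering argument, combined with a separate analysis of the $h \to 0$ limit. Concretely, I will show that for each fixed $h>0$,
\[
\sup_{\tau^t \in \mathcal{T}^t}\bigl| \hat{R}_{h,p}^{t}(\tau^t,\ldots,\tau^1) - \mathbb{E}_{B_o,W}[(\tilde{\eta}_h(B_o+\sigma^t W;\tau^t)-B_o)^2]\bigr| \xrightarrow{p} 0,
\]
and then show that $\mathbb{E}_{B_o,W}[(\tilde{\eta}_h(B_o+\sigma^t W;\tau^t)-B_o)^2] \to R_A^t(\tau^t,\ldots,\tau^1)$ uniformly over $\tau^t \in \mathcal{T}^t$ as $h \to 0$. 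Composing these two statements with the triangle inequality yields the claim.

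First I would establish an equicontinuity-type bound: I claim that both $\tau \mapsto \hat{R}_{h,p}^t(\tau,\ldots)$ and $\tau \mapsto \mathbb{E}[(\tilde{\eta}_h(B_o+\sigma^t W;\tau)-B_o)^2]$ are Lipschitz in $\tau$ with a constant $L_h$ that depends only on $h$ and on second-moment information (not on $p$). This follows because $\tilde{\eta}_h(\cdot;\tau)$ is the convolution of the soft-threshold with a Gaussian kernel of width $h$, so both $\partial_\tau \tilde{\eta}_h$ and $\partial_\tau \tilde{\eta}_h'$ exist and are uniformly bounded (say by $C/h$ up to constants). Differentiating the expression \eqref{eq:empriskdefamp} in $\tau^t$ and applying Cauchy--Schwarz gives
\[
\bigl|\partial_{\tau^t} \hat{R}_{h,p}^t(\tau^t,\ldots)\bigr| \;\le\; L_h \Bigl( 1 + \tfrac{1}{p}\|\beta^t + X^* z^t\|_2 + (\sigma^t)^2\Bigr).
\]
The right-hand side is bounded almost surely as $p \to \infty$ because $\tfrac{1}{p}\|\beta^t+X^*z^t\|_2^2$ converges (by Lemma~1 of \cite{BaMo10}) to a finite constant, and similarly $(\sigma^t)^2 < \infty$. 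An analogous bound holds for the deterministic limit.

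Given this equicontinuity, I would cover the compact set $\mathcal{T}^t$ by finitely many intervals of radius $\varepsilon / (3 L_h)$ with centers $\tau^t_{(1)},\ldots,\tau^t_{(N)}$. Applying Lemma~\ref{thm:amptune} at each center yields convergence in probability of $\hat{R}_{h,p}^t$ to its limit at the $N$ centers simultaneously, and the Lipschitz bound controls the deviation between the supremum over $\tau^t$ and the maximum over centers by at most $2\varepsilon/3$. A standard $\varepsilon$-$\delta$ argument then gives the uniform convergence in probability for fixed $h$. For the second step, I would observe that $\tilde{\eta}_h(u;\tau) \to \eta(u;\tau)$ pointwise as $h \to 0$, and the integrand $(\tilde{\eta}_h(B_o+\sigma^t W;\tau)-B_o)^2$ is dominated by a $\tau$-integrable envelope (e.g.\ $2(|B_o+\sigma^t W|+\tau+|B_o|)^2$ plus an $h$-uniform correction), allowing dominated convergence, and then one checks that the resulting convergence is in fact uniform on the compact set $\mathcal{T}^t$ by the continuity of the limit in $\tau$ combined with Dini's theorem (or by direct estimation of the convolution error near the threshold).

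The main obstacle will be ensuring that the Lipschitz constant $L_h$ enters the argument correctly: taking $p \to \infty$ first with $h$ fixed is harmless, but the covering is at scale $\varepsilon/L_h$, which is fine since we do not need to send $h \to 0$ and $p \to \infty$ simultaneously here (the theorem prescribes the iterated limits $\lim_{h\to 0}\lim_{p\to\infty}$). A secondary technical point is verifying that the asymptotic boundedness of $\tfrac{1}{p}\|\beta^t+X^*z^t\|_2$ and $\tfrac{1}{n}\|z^t\|_2^2$ holds jointly with the state-evolution convergence, which follows from repeated application of the AMP state evolution theorem (Theorem~\ref{thm:ampeqpseudo_lip}) with pseudo-Lipschitz test functions of order~$2$.
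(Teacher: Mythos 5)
Your proposal is correct and shares the paper's overall skeleton: both split $|\hat{R}_{h,p}^{t}-R_A^t|$ by the triangle inequality through the intermediate quantity $R_{A,h}^t(\tau^t,\ldots,\tau^1)=\mathbb{E}(\tilde{\eta}_h(B+\sigma^t W;\tau^t)-B)^2$, handle $p\rightarrow\infty$ at fixed $h$ with a finite covering of the compact set $\mathcal{T}^t$, and then treat $h\rightarrow 0$ as a deterministic uniform-approximation step. Where you genuinely diverge is in the mechanism of the covering argument. The paper localizes the supremum \emph{inside} the empirical average: it defines $\bar{U}(b_i,\beta_{o,i},\rho,\tau,\sigma)=\sup_{|\tilde{\tau}-\tau|\le\rho}U_{h,p}(b_i,\beta_{o,i},\tilde{\tau},\sigma)-\mathbb{E}U_{h,p}(\sigma W,B,\tilde{\tau},\sigma)$, verifies that this localized supremum is itself Lipschitz in $(b_i,\beta_{o,i})$ (Lemma \ref{lem:lipschitzsupfunc}) so that Lemma~1 of \cite{BaMo10} applies to it directly, and chooses the ball radii so that $\mathbb{E}\bar{U}<\epsilon$ via the continuity result of Lemma \ref{lem:mainlemmainasymptot}; a union bound over the finite subcover then finishes. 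You instead pull the supremum \emph{outside} by proving an equicontinuity bound: $\tau\mapsto\hat{R}_{h,p}^t(\tau,\ldots)$ is Lipschitz with a constant $L_h=O(1/h)$ independent of $p$ (asymptotically almost surely), so the supremum over $\mathcal{T}^t$ is controlled by the maximum over an $\varepsilon/(3L_h)$-net, at whose finitely many centers Lemma \ref{thm:amptune} already gives almost sure convergence. Your route is somewhat more elementary in that it only invokes the state-evolution concentration result at finitely many fixed thresholds and never needs to re-verify the pseudo-Lipschitz property for a supremum-type functional; the price is that the net must be taken at scale $1/L_h$, which is harmless here only because the theorem prescribes the iterated limits $\lim_{h\to0}\lim_{p\to\infty}$ rather than a joint limit, a point you correctly flag. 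For the $h\rightarrow0$ step, the paper argues via joint continuity of $R_{A,h}^t$ in $(h,\tau)$ plus Lemma \ref{lem:continuitytwovariables}; your Dini suggestion is the one weak spot (monotonicity in $h$ is not established), but your alternative of direct estimation is actually cleaner than either: since $\eta(\cdot;\tau)$ is $1$-Lipschitz, $|\tilde{\eta}_h(u;\tau)-\eta(u;\tau)|\le h\sqrt{2/\pi}$ uniformly in $(u,\tau)$, which gives $\sup_{\tau\in\mathcal{T}^t}|R_{A,h}^t-R_A^t|=O(h)$ outright.
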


\begin{proof}
We first define the the following function:
\[
R^t_{A,h}(\tau^t, \tau^{t-1}, \ldots, \tau^1) \triangleq \mathbb{E} (\tilde{\eta}_h(B+ \sigma^t W; \tau^t) -B)^2,
\]
where $B$ and $W$ are two independent random variables with distributions $p_\beta$ and $N(0,1)$ respectively, and $\sigma^t$ satisfies \eqref{eq:ampevolution}. Note that this is the asymptotic risk of AMP for the smoothed version of the soft thresholding function. By the triangle inequality we have
\begin{eqnarray}\label{eq:comdinedtermsupconv}
 \lefteqn{\left| \hat{R}_{h,p}^{t}(\tau^t, \tau^{t-1}, \ldots, \tau^1) - R_A^t(\tau^t, \tau^{t-1},\ldots,  \tau^1\right|} \nonumber \\
  & \leq  \left| \hat{R}_{h,p}^{t}(\tau^t, \tau^{t-1}, \ldots, \tau^1) - R_{A,h}^t ( \tau^t, \tau^{t-1}, \ldots, \tau^1)\right| \nonumber  \\&~~ ~+  \left| R_{A,h}^t ( \tau^t, \tau^{t-1}, \ldots, \tau^1) - R_{A}^t ( \tau^t, \tau^{t-1}, \ldots, \tau^1) \right|. 
\end{eqnarray}

Hence, we first prove that 
\begin{equation}\label{eq:asymptotfirststep}
 \lim_{p\rightarrow \infty}  \sup_{\tau^t \in \mathcal{T}^t} \left| \hat{R}_{h,p}^{t}(\tau^t, \tau^{t-1}, \ldots, \tau^1) - R_{A,h}^t ( \tau^t, \tau^{t-1}, \ldots, \tau^1)\right|=0, 
\end{equation}
in probability for every $h>0$ and every $\tau^1, \ldots, \tau^{t-1}$.  Secondly, we prove that 
\begin{equation} \label{eq:asymptotsecondtstep}
\lim_{h \rightarrow 0} \sup_{\tau \in \mathcal{T}^t}  \left| R_{A,h}^t ( \tau^t, \tau^{t-1}, \ldots, \tau^1) - R_{A}^t ( \tau^t, \tau^{t-1}, \ldots, \tau^1) \right|=0.
\end{equation}
Combining these two results will establish the theorem. To establish \eqref{eq:asymptotfirststep}, we start with the following definitions whose importance will be clear later: 
 \begin{eqnarray}
 U_{h,p} (b_i, \beta_{o,i} ,\tau, \sigma) & \triangleq & (\tilde{\eta}_h(b_i + \beta_{o,i};\tau)-(b_i+ \beta_{o,i}))^2+ \sigma^2+2 \sigma^2\left[(\tilde{\eta}_h'(b_i +\beta_{o,i} ;\tau)-1)\right], \\
\bar{U}(b_i, \beta_{o,i}, \rho, \tau, \sigma) &\triangleq& \sup_{\tilde{\tau} : |\tilde{\tau} - \tau| \leq \rho} U_{h,p} (b_i, \beta_{o,i} ,\tilde{\tau}, \sigma)-\mathbb{E} U_{h,p} (\sigma W, B ,\tilde{\tau}, \sigma). \nonumber
 \end{eqnarray}
where $B$ and $W$ are two independent random variables with distributions $p_\beta$ and $N(0,1)$ respectively. The following remarks will clarify some of the main features and connections of these definitions:
\begin{enumerate}
\item It is straightforward to verify that $$\hat{R}^t_{h,p} (\tau^t, \tau^{t-1}, \ldots, \tau^1)= \frac{1}{p} \sum_{i=1}^p U_{h,p} (b^t_i, \beta_{o,i} ,\tau^t, \sigma^t),$$
where $b^t = \beta^t + X^* z^t - \beta_o$ and $\beta^t$ is the estimate of AMP with threshold parameters $\tau^i$ at the $i^{\rm th}$ iteration.  

\item According to Lemma \ref{thm:amptune}, $$\frac{1}{p} \sum_{i=1}^p U_{h,p} (b_i^t, \beta_{o,i} ,\tau^t, \sigma^t) \overset{a.s.}{\rightarrow} R_{A,h}^t(\tau^t, \tau^{t-1}, \ldots, \tau^1).
$$

\item According to Lemma \ref{thm:SURE}, $R_{A,h}(\tau^t, \tau^{t-1}, \ldots, \tau^1) = \mathbb{E} U_{h,p} (\sigma^t W, B ,\tau, \sigma^t)$, where the expectation is with respect to two independent random variables $W \sim N(0,1)$ and $B \sim p_\beta$. 

\end{enumerate}

The next four lemmas prove several basic properties of $R_{A, h}$, $U_{h,p}$ and $\bar{U}_{h,p}$ that will be useful later in our proof.

\begin{lemma}\label{lem:continuityRA}
$R_{A,h}^t (\tau^t, \tau^{t-1}, \ldots, \tau^1)$ is a continuous function of $\tau^t$, for every $\tau^1, \tau^2, \ldots, \tau^{t-1} \in \mathcal{T}^1 \times \mathcal{T}^2, \ldots, \mathcal{T}^{t-1}$.  
\end{lemma}
\begin{proof}
The proof is a straightforward application of the dominated convergence theorem. 
\begin{eqnarray*}
\lim_{\tilde{\tau}^t \rightarrow \tau^t} R_{A,h}^t (\tilde{\tau}^t, \tau^{t-1}, \ldots, \tau^1) &=& \lim_{\tilde{\tau}^t \rightarrow \tau^t} \mathbb{E} U_{h,p} (\sigma^t W, B ,\tilde{\tau}^t, \sigma^t) \nonumber \\
 &\overset{(a)}{=}&\mathbb{E}  \lim_{\tilde{\tau}^t \rightarrow \tau^t} U_{h,p} (\sigma^t W, B ,\tilde{\tau}^t, \sigma^t) \nonumber \\
 & \overset{(b)}{=}& \mathbb{E}  U_{h,p} (\sigma^t W, B ,{\tau}^t, \sigma^t).
\end{eqnarray*}
Equality (a) is due to the fact that $ U_{h,p} $ is a bounded function of both $\sigma^t W $ and $B$ and hence dominated convergence theorem can be applied. Equality (b) uses the continuity of $U_{h,p}$ with respect to $\tau^t$. 
\end{proof}

\begin{lemma}\label{lem:lipshtizfist1}
$U_{h,p} (b_i, \beta_{o,i} ,{\tau} , \sigma)$ is a Lipschitz function of $(b_i, \beta_{o,i})$ with Lipschitz constant
\[
L_U  \triangleq \sqrt{2} \max_{\tau \in \mathcal{T}}  2 \tau  (\sup_\zeta|\tilde{\eta}_h' (\zeta; \tau)|+1)+ 2 \sigma^2 \sup_{\tilde{\zeta}}| \tilde{\eta}_h''(\tilde{\zeta}; \tau )| ). 
\]
\end{lemma}
It is important to note that both $|\tilde{\eta}_h' (\zeta; \tau)|$ and $|\tilde{\eta}_h''(\tilde{\zeta}; \tau )|$ are bounded functions of $\zeta$ and $\tilde{\zeta}$ respectively for a fixed $\tau$. Since $\tau$ belongs to a compact set $L_U$ is bounded as well.   
\begin{proof}
Define $s_i \triangleq b_i + \beta_{o,i}$ and $\tilde{s}_i  \triangleq \tilde{b}_i + \tilde{\beta}_i$. 
\begin{eqnarray*}
\lefteqn{|U_{h,p} (b_i, \beta_{o,i} ,{\tau}, \sigma) - U_{h,p} (\tilde{b}_i, \tilde{\beta}_{o,i} ,{\tau}, \sigma )|} \nonumber \\
&=& | (\tilde{\eta}_h(s_i;\tau)-s_i)^2+2 \sigma^2\tilde{\eta}_h'(s_i ;\tau)-  (\tilde{\eta}_h(\tilde{s}_i;\tau)-\tilde{s}_i)^2+2 \sigma^2\tilde{\eta}_h'(\tilde{s}_i ;\tau) | \nonumber \\
&\overset{(a)}{=}&  |(\tilde{\eta}_h' (\zeta; \tau)+1) (s_i - \tilde{s}_i) (\tilde{\eta}_h(s_i;\tau)-s_i + \tilde{\eta}_h(\tilde{s}_i;\tau)-\tilde{s}_i) |+ 2 \sigma^2| \tilde{\eta}_h''(\tilde{\zeta}; \tau) ||(s_i - \tilde{s}_i)| \nonumber \\
&\overset{(b)}{\leq}&  2 \tau (\sup_\zeta |\tilde{\eta}_h' (\zeta; \tau)|+1)+ 2 \sigma^2( \sup_{\tilde{\zeta}}| \tilde{\eta}_h''(\tilde{\zeta}; \tau )|   ) |(s_i - \tilde{s}_i)|. 
\end{eqnarray*}
Note that Equality (a) is derived from the mean value theorem. To obtain Inequality (b) we used the fact that $|\eta_h(s, \tau) - s| \leq \tau$. Finally, to show that the function is Lipschitz  we employ the inequality $|s_i- \tilde{s}_i| \leq \sqrt{2} \sqrt{(b_i - \tilde{b}_i)^2 + (\beta_i^t - \tilde{\beta}_i)^2} $.
\end{proof}

\begin{lemma}\label{lem:lipschitzsupfunc}
$\bar{U}(b_i, \beta_{o,i}, \rho, \tau, \sigma) $ is also a Lipschitz function of $(b_i, \beta_{o,i})$ with Lipschitz constant $L_U$ defined in Lemma \ref{lem:lipshtizfist1}. 
\end{lemma}
\begin{proof}
From Lemma \ref{lem:lipshtizfist1} we have
\[
U_{h,p} (b_i, \beta_{o,i} ,\tilde{\tau}, \sigma) \leq  U_{h,p} (\tilde{b}_i, \tilde{\beta}_{o,i} ,\tilde {\tau}, \sigma) + L_U \sqrt{(b_i - \tilde{b}_i)^2 + (\beta_i^t - \tilde{\beta}_i)^2}. 
\]
By subtracting the constant (in terms of $b_i$ and $\beta_{o,i}$ ) $\mathbb{E} U_{h,p} (\sigma^t W, B ,\tilde{\tau}, \sigma)$ and taking the supremum with respect to $\tilde{\tau}$ we obtain 
\[
\bar{U}(b_i, \beta_{o,i}, \rho, \tau, \sigma)  \leq \bar{U}(\tilde{b}_i, \tilde{\beta}_{o,i}, \rho, \tau, \sigma) + L_U \sqrt{(b_i - \tilde{b}_i)^2 + (\beta_i^t - \tilde{\beta}_i)^2}.
\]
The proof of the other direction is similar. 
\end{proof}

\begin{lemma} \label{lem:mainlemmainasymptot} Let $W$ and $B$ denote two independent random variables with distributions $N(0,1)$ and $p_\beta$ respectively. Then,
$$\lim_{\rho \rightarrow 0} \mathbb{E} \bar{U} (\sigma W, B, \rho, \tau, \sigma) = 0. $$
\end{lemma}
\begin{proof}
Since $\bar{U}$ is a bounded function, we can exchange the order of $\lim_{\rho \rightarrow 0}$ and $\mathbb{E}$. Hence, 
\begin{eqnarray}
\lim_{\rho \rightarrow 0}  \mathbb{E} \bar{U} (\sigma W, B, \rho, \tau, \sigma) & =& \mathbb{E} \lim_{\rho \rightarrow 0} \bar{U} (\sigma W, B, \rho, \tau, \sigma)  \nonumber \\
&\overset{(a)}{=}& \mathbb{E} U_{h,p} (\sigma^t W, B ,{\tau}, \sigma) - \mathbb{E} U_{h,p} (\sigma^t \tilde{W}, \tilde{B} ,{\tau}, \sigma) =0.
\end{eqnarray}
Note that to obtain Equality (a) we use the continuity of $U_{h,p}$ in $\tau$. 
\end{proof}

Lemma \ref{lem:mainlemmainasymptot} implies that for any $\epsilon>0$, there exists $\rho_{\tau_0}$ such that if $|\tau- \tau_0| < \rho_{\tau_0}$, then $ \mathbb{E} \bar{U}(\sigma^t W, B, \rho, \tau) < \epsilon$. Note that we have a subscript $\tau_0$  for $\rho_{\tau_0}$ to emphasize on the fact that $\rho$ is dependent on the choice of $\tau_0$. Define
\[
\mathcal{B} (c, \rho) = \{ \tau \in \mathcal{T} \ | \ |\tau -c | \leq \rho \}. 
\]
 Consider the set of all the balls $\mathcal{B}(\tau, \rho_{\tau})$ for every $\tau \in \mathcal{T}$. This set forms a covering of $\mathcal{T}$. Since $\mathcal{T}$ is compact, it has a finite subcover. Let $\mathcal{B}(\tau_1^*, \rho_1^*)$, $\mathcal{B}(\tau_2^*, \rho_2^*)$, $\ldots, \mathcal{B}(\tau_M^*, \rho_M^*)$ denote this finite subcover. We have
\begin{eqnarray}
\lefteqn{\mathbb{P} \left( \sup_\tau  \frac{1}{p} \sum_{i=1}^p U_{h,p} (b_i^t, \beta_{o,i}, {\tau}, \sigma^t)  - R_{A,h} (\tau, \tau^{t-1}, \ldots, \tau^1) > 2 \epsilon \right)  } \nonumber \\
&\leq& \mathbb{P} \left( \max_i  \frac{1}{p} \sum_{i=1}^p \bar{U}_{h,p} (b_i^t, \beta_{o,i},\rho^*_i, {\tau}^*_i, \sigma^t)  > 2 \epsilon \right) \nonumber \\
& <& M \max_i \mathbb{P} \left( \frac{1}{p} \sum_{i=1}^p \bar{U}_{h,p} (b_i^t, \beta_{o,i}, \rho^*_i, {\tau}^*_i , \sigma^t)  > 2 \epsilon \right).
\end{eqnarray}
 Note that the first inequality is due to the definition of $\bar{U}_{h,p}$ and the second inequality is a simple application of the union bound. The last step of the proof is to show that
\begin{eqnarray}\label{eq:lastinuniform}
\mathbb{P} \left( \frac{1}{p} \sum_{i=1}^p \bar{U}_{h,p} (b_i^t, \beta_{o,i}, \rho^*_i, {\tau}^*_i , \sigma^t)  > 2 \epsilon \right) \rightarrow 0,
\end{eqnarray}
as $p \rightarrow \infty$. Note that if we combine Lemma \ref{lem:lipschitzsupfunc} and Lemma 1 of \cite{BaMo10} we obtain 
\begin{equation}\label{eq:probfromBayatipaper}
\mathbb{P} \left( \frac{1}{p} \sum_{i=1}^p \bar{U}_{h,p} (b_i^t, \beta_{o,i}, \rho^*_i, {\tau}^*_i , \sigma^t) - \mathbb{E}  \bar{U}_{h,p} (\sigma^t W, B, \rho^*_i, {\tau}^*_i, \sigma^t )   >  \epsilon \right) \rightarrow 0,
\end{equation}
 as $p \rightarrow \infty$. Furthermore, from the construction of the covering we have
\begin{eqnarray}\label{eq:boundexplast}
\max_{i=1, \ldots, M} \mathbb{E}  \bar{U}_{h,p} (\sigma^t W, B, \rho^*_i, {\tau}^*_i, \sigma^t ) < \epsilon. 
\end{eqnarray}
Hence by combining \eqref{eq:probfromBayatipaper} and \eqref{eq:boundexplast} we obtain \eqref{eq:lastinuniform}. 
\end{proof}


At this point we refer the reader to \eqref{eq:comdinedtermsupconv}. So, far we have been able to prove 
 \eqref{eq:asymptotfirststep}. Hence, if we prove \eqref{eq:asymptotsecondtstep}, it will establish \eqref{eq:comdinedtermsupconv} and will finish the proof of Theorem \ref{thm:amptune1}.
Hence, in this step we would like to prove that $\sup_{\tau^t \in \mathcal{T}^t}  \left| R_{A,h}^t ( \tau^t, \tau^{t-1}, \ldots, \tau^1) - R_{A}^t ( \tau^t, \tau^{t-1}, \ldots, \tau^1) \right|
$ is a continuous function of $h$. In Lemma \ref{lem:continuityRA} we showed that $R_{A,h}^t ( \tau^t, \tau^{t-1}, \ldots, \tau^1)$ is a continuous function of $ \tau^t$. It is straightforward to use the same argument to show that it is a continuous function of $(h, \tau^t)$. Hence, the function \newline $ \left| R_{A,h}^t ( \tau^t, \tau^{t-1}, \ldots, \tau^1) - R_{A}^t ( \tau^t, \tau^{t-1}, \ldots, \tau^1) \right|$ is also  a continuous function of $(h, \tau)$. We require the following standard result from analysis.  

\begin{lemma}\label{lem:continuitytwovariables}
Let $f(h, \tau)$ denote a continuous function from $\mathbb{R}^2$ to $\mathbb{R}$. Also, assume that $\mathcal{T}$ is a compact subset of $\mathbb{R}$. Then,
\[
\lim_{h \rightarrow h_o} \sup_{\tau \in \mathcal{T}} f(h, \tau) = \sup_{\tau \in \mathcal{T}} f(h_o, \tau). 
\]
\end{lemma}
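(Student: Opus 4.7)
The plan is to show that the function $g(h) \triangleq \sup_{\tau \in \mathcal{T}} f(h, \tau)$ is continuous at $h_o$, which immediately gives the desired limit. The key ingredient will be the uniform continuity of $f$ on a compact neighborhood of $\{h_o\} \times \mathcal{T}$.

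First, I would fix an arbitrary $\epsilon > 0$ and restrict attention to the compact set $K \triangleq [h_o - 1, h_o + 1] \times \mathcal{T} \subset \mathbb{R}^2$, which is compact as the product of two compact sets. Since $f$ is continuous on $\mathbb{R}^2$, its restriction to $K$ is uniformly continuous by the Heine–Cantor theorem. Hence, there exists $\delta \in (0, 1)$ such that for every pair of points $(h, \tau), (h', \tau') \in K$ satisfying $|h - h'| + |\tau - \tau'| < \delta$, one has $|f(h, \tau) - f(h', \tau')| < \epsilon$. In particular, specializing to $\tau' = \tau$, for every $h$ with $|h - h_o| < \delta$ and every $\tau \in \mathcal{T}$ we obtain $|f(h, \tau) - f(h_o, \tau)| < \epsilon$.

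From the pointwise bound $f(h, \tau) < f(h_o, \tau) + \epsilon \leq \sup_{\tau' \in \mathcal{T}} f(h_o, \tau') + \epsilon$, taking the supremum over $\tau \in \mathcal{T}$ gives $\sup_\tau f(h, \tau) \leq \sup_\tau f(h_o, \tau) + \epsilon$. The reverse inequality $\sup_\tau f(h_o, \tau) \leq \sup_\tau f(h, \tau) + \epsilon$ follows by symmetry of the uniform-continuity bound. Combining these yields $|g(h) - g(h_o)| \leq \epsilon$ whenever $|h - h_o| < \delta$, which proves the claim. Note that the suprema here are actually achieved (as maxima), since $\tau \mapsto f(h, \tau)$ is continuous on the compact set $\mathcal{T}$, so there is no subtlety about finiteness or attainment.

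I do not anticipate a genuine obstacle: the only nontrivial step is invoking uniform continuity, which requires compactness of both $\mathcal{T}$ and the interval around $h_o$. The role of compactness of $\mathcal{T}$ is essential — without it the supremum could fail to transfer across the limit (e.g., an escape-to-infinity phenomenon in $\tau$), but here that possibility is ruled out by hypothesis.
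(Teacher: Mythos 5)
Your proof is correct. Note that the paper itself does not prove this lemma --- it declares it ``a standard result'' and defers to Lemma 12 of an external reference --- so there is no in-paper argument to compare against. Your route (Heine--Cantor uniform continuity of $f$ on the compact product $[h_o-1,h_o+1]\times\mathcal{T}$, then transferring the uniform bound $|f(h,\tau)-f(h_o,\tau)|<\epsilon$ through the supremum in both directions) is the standard textbook argument, and it is complete: the only hypothesis you use beyond continuity is compactness of $\mathcal{T}$, exactly where it is needed.
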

According to this lemma $\sup_{\tau \in \mathcal{R}} f(h, \tau) $ is a continuous function of $h$. This is a standard result and its proof can be found elsewhere. For instance it is equivalent to Lemma 12 in \cite{zheng2015does}. Applying Lemma \ref{lem:continuitytwovariables} to $ | R_{A,h}^t ( \tau^t, \tau^{t-1}, \ldots, \tau^1)$ $- R_{A}^t ( \tau^t, \tau^{t-1}, \ldots, \tau^1) |$ proves  \eqref{eq:asymptotsecondtstep}.

\subsubsection{Consistency of the parameter tuning} At this point we remind the reader that as we discussed in Section \ref{sec:proofroadmapconsistencyamp} we broke the proof of Theorem \ref{cor:empconvopt} in two steps. The first step was to prove
\begin{align}
\lim_{h \rightarrow 0} \lim_{p\rightarrow \infty}  \sup_{\tau^t \in \mathcal{T}} \left| \hat{R}_{h,p}^{t}(\tau^t, \tau^{t-1}, \ldots, \tau^1) - R_A^t(\tau^t, \tau^{t-1}, \ldots, \tau^1)\right| = 0
\end{align}
 in probability, for every $\tau^1, \ldots, \tau^{t-1} \in \mathcal{T}^1 \times \ldots \times \mathcal{T}^{t-1}$, that was established in Theorem \ref{thm:amptune1}. In this section we would like to prove the second step, i.e., the consistency of $\hat{\tau}_{p,h}^1, \hat{\tau}_{p,h}^2, \ldots, \hat{\tau}_{p,h}^t$. For the proof we employ an induction. As a base of induction, we first prove that $\hat{\tau}_{p,h}^1 \rightarrow \tau^{*,1}$ in probability. First note that from the proof of Lemma \ref{lem:quasiconvexsoft}, we conclude that for every $\epsilon>0$ we have
\[
\inf_{\tau: |\tau- \tau^{*,1}|>\epsilon } R_A^1(\tau) >R_A^1(\tau^{*,1}).
\]
In the rest of the proof we assume that $\inf_{\tau: |\tau- \tau^{*,1}|>\epsilon } R_A^1(\tau)-R_A^1(\tau^{*,1})= 2 \gamma$, where $\gamma>0$ is a fixed number. 
We proved in Theorem \ref{thm:amptune1} that 
\begin{equation}
\sup_{\tau \in \mathcal{T}} |R_{A,h}^1(\tau) - R_A^1(\tau)| \rightarrow 0,
\end{equation}
as $h \rightarrow 0$. Hence, we can find $h_o$ such that for every $h<h_o$, $\sup_{\tau \in \mathcal{T}} |R_{A,h}^1(\tau) - R_A^1(\tau)| < \gamma/2$. This implies that for $h< h_0$
\begin{eqnarray}
R_{A,h}(\tau^{*,1}) &<& R_A(\tau^{*,1} )+ \gamma/2,  \label{eq:inf1} \\
\inf_{\tau: |\tau-\tau^{*,1}|> \epsilon}  R_{A,h}^1(\tau) &>&  \inf_{\tau: |\tau-\tau^{*,1}|> \epsilon}  R_A^1(\tau ) - \gamma/2.  \label{eq:inf2}
\end{eqnarray}
In Theorem \ref{thm:amptune1} we proved that
\[
\mathbb{P} \left(\sup_\tau |\hat{R}_{h,p}^1 (\tau) - R_{A,h}^1(\tau)| > \gamma/2\right)\rightarrow 0.
\]
as $p \rightarrow \infty$. As a result, we conclude that
\begin{eqnarray}
&&\mathbb{P} (\hat{R}_{h,p}^1 (\tau^{*,1}) > R_{A,h}^1(\tau^{*,1})+\gamma/2 ) \rightarrow 0, \label{eq:inf3} \\
&&\mathbb{P} \left(\inf_{\tau: |\tau-\tau^{*,1}|> \epsilon}  \hat{R}_{h,p}^1 ( \tau) <\inf_{\tau: |\tau-\tau_{*,1}|> \epsilon} R_{A,h}^1(\tau)-\gamma/2 \right) \rightarrow 0. \label{eq:inf4}
\end{eqnarray}
Hence, by combining \eqref{eq:inf1} and \eqref{eq:inf3} we conclude that
\begin{equation}\label{eq:corfinal1}
\mathbb{P} \left( \hat{R}_{h,p}^1 (\tau^{*,1}) > R_{A}(\tau^{*,1})+\gamma \right) \rightarrow 0. 
\end{equation}
It is also straightforward to combine \eqref{eq:inf2} and \eqref{eq:inf4} and obtain
\begin{equation}\label{eq:corfinal2}
\mathbb{P} \left( \inf_{\tau : \ |\tau-\tau^{*,1}|> \epsilon } \hat{R}_{h,p}^1 ( \tau) < \inf_{\tau : \ |\tau-\tau^{*,1}|> \epsilon } R_{A,h}^1(\tau)-\gamma \right) \rightarrow 0. 
\end{equation}
Combining \eqref{eq:corfinal1} and  \eqref{eq:corfinal2} proves that if $h< h_0$, then
\begin{equation}\label{eq:asymptotconsistencytrick}
\mathbb{P} (|\hat{\tau}^1_{p,h} - \tau^{*,1}| > \epsilon) \rightarrow 0, 
 \end{equation}
 as $p \rightarrow \infty$.

Now we use an induction to show that if $\hat{\tau}^1_{p,h} \overset{p}{\rightarrow} \tau^{*,1}, \hat{\tau}^2_{p,h} \overset{p}{\rightarrow} \tau^{*,2}, \ldots, \hat{\tau}^t_{p,h} \overset{p}{\rightarrow} \tau^{*,t}$, then $\hat{\tau}^{t+1}_{p,h} \overset{p}{\rightarrow} \tau^{*,t+1}$. To keep the notations simpler we only prove this claim for $t=1$. The proof for an arbitrary $t$ is the same. Our proof uses the following steps:
\begin{enumerate}
\item We first prove that $ \left|  \hat{R}_{h,p}^2(\tau^2, \hat{\tau}^1_{p,h})- {R}_A^2(\tau^2, {\tau}^{1,*}) \right| \overset{p}{\rightarrow} 0$. Note that the main reason this can not be derived from Theorem \ref{thm:amptune1} is that now we have used a data-dependent threshold $\hat{\tau}^1_{p,h}$ in the first iteration. In Theorem \ref{thm:amptune1} the threshold does not depend on data.

\item Next we prove that $\sup_{\tau^2 \in \mathcal{T}^2}   | \hat{R}_{h,p}(\tau^2, \hat{\tau}^1_{p,h})- {R}_{B}(\tau^2, {\tau}^{*,1}) | \overset{p}{\rightarrow} 0$. Given the proof technique we used for proving Theorem \ref{thm:amptune1} and the fact that we have already proved \newline $ \left|  \hat{R}_{h,p}(\tau^2, \hat{\tau}^1_{p,h})- {R}_{B}(\tau^2, {\tau}^{*,1}) \right| \overset{p}{\rightarrow} 0$ the proof of this statement is straightforward and will be skipped. 

\item Finally we use the fact that $\sup_{\tau^2 \in \mathcal{T}^2}   | \hat{R}_{h,p}(\tau^2, \hat{\tau}^1_{p,h})- {R}_{B}(\tau^2, {\tau}^{1,*}) | \overset{p}{\rightarrow} 0$ and the proof technique developed in \eqref{eq:asymptotconsistencytrick} to show that $\hat{\tau}^2_{p,h} \rightarrow \tau^{2,*}$. Note that by Theorem \ref{thm:stepwiseoptimal} we already know that even though $\tau^1$ is set to $\tau^{*,1}$, the optimal choice of $\tau^2$ can still be achieved. Since this is exactly the same as the proof of \eqref{eq:asymptotconsistencytrick} we will skip the proof.
\end{enumerate}

We only prove the first of the above three steps. First note that
\begin{eqnarray}
 \lefteqn{\left|  \hat{R}_{h,p}^2(\tau^2, \hat{\tau}^1_{p,h})- {R}_{A}^2(\tau^2, {\tau}^{*,1}) \right|  \leq  \left|  \hat{R}^2_{h,p}(\tau^2, \hat{\tau}^1_{p,h})-  \hat{R}^2_{h,p}(\tau^2, {\tau}^{*,1}) \right| } \nonumber \\
 &&+ \left|  \hat{R}^2_{h,p}(\tau^2, {\tau}^{*,1})- {R}^2_{A,h}(\tau^2, {\tau}^{*,1}) \right| + \left| {R}^2_{A,h}(\tau^2, {\tau}^{*,1}) - {R}^2_A(\tau^2, {\tau}^{*,1}) \right|.
\end{eqnarray}
In our proof we consider each of the three terms on the right and prove that they converge to zero in probability. 

 $\hat{R}^2_{h,p}(\tau^2, \tau^1) $ is differentiable in terms of $\tau^1$ and $\tau^2$. Furthermore, the derivative is bounded with probability one. Hence, by the mean value theorem we have
 \[
 |\hat{R}^2_{h,p}(\tau^2, \hat{\tau}^1_{p,h}) - \hat{R}^2_{h,p}(\tau^2, {\tau}^{*,1})| \leq C |\hat{\tau}^1_{p,h}- {\tau}^{*,1}|,
 \]
 where $C$ is an upper bound on the derivative of  $\hat{R}_{h,p}$ in terms of $\tau^1$. 
  Hence, it is straightforward to use the base of the induction and prove that 
  \begin{eqnarray*}
  \lefteqn{ \mathbb{P} \left( |\hat{R}_{h,p}^2(\tau^2, \hat{\tau}^1_{p,h}) - \hat{R}^2_{h,p}(\tau^2, {\tau}^{*,1})| > \epsilon \right)} \nonumber \\
   &\leq& \mathbb{P} \left( \sup_{(\tau_1, \tau_2) \in \mathcal{T}_1 \times \mathcal{T}_2} ( \hat{R}^2_{h,p})' (\tau_2, \tau_1)  > C \right)   + \mathbb{P} (C |\hat{\tau}^1_{p,h}- {\tau}^{*,1}| > \epsilon). 
  \end{eqnarray*}
  Since both probabilities go to zero as $p \rightarrow \infty$ we conclude that 
  \begin{equation}\label{eq:lastproof1}
  \mathbb{P} \left( |\hat{R}^2_{h,p}(\tau^2, \hat{\tau}^1_{p,h}) - \hat{R}^2_{h,p}(\tau^2, {\tau}^{*,1})| > \epsilon \right)\rightarrow 0.
  \end{equation}
  Furthermore, according to Theorem \ref{thm:amptune1} we have
  \begin{equation}\label{eq:lastproof2}
   | \hat{R}^2_{h,p}(\tau^2, {\tau}^{*,1})- {R}_{A,h}^2(\tau^2, {\tau}^{*,1}) | \overset{p}{\rightarrow} 0.
  \end{equation}
  By combining \eqref{eq:lastproof1} and \eqref{eq:lastproof2} we obtain
  \[
   |\hat{R}_{h,p}^2(\tau^2, \hat{\tau}^1_{p,h}) - {R}_{A,h}^2(\tau^2, {\tau}^{*,1})| \overset{p}{\rightarrow} 0.
  \]
  The proof of $ \left| {R}_{A,h}^2(\tau^2, {\tau}^{*,1}) - {R}_A^2(\tau^2, {\tau}^{*,1}) \right|  \rightarrow 0$ as $h \rightarrow 0$, is a straightforward application of the continuity of ${R}_{A,h}^2(\tau^2, {\tau}^{*,1})$ with respect to $(h, \tau^2)$ and is hence skipped. This completes our proof of the consistency of $\hat{\tau}_{p,h}^{2}$.
  
 \subsection{Proof of Proposition \ref{prop:optlamoptampcon}} \label{sec:prooflasttheorem}
 
 \subsubsection{Roadmap of the proof}
 Since the proof of this theorem is also long we describe the steps of the proof below. 
 \begin{enumerate}
 \item We consider the AMP algorithm in which the threshold is set to $\chi \sigma^t$ at every iteration. We show that for every value of $\lambda$ there is a unique value of $\chi$ such that the solution of AMP has the same MSE as the solution of LASSO and vice versa. Hence, we conclude that if $\lambda_{opt}$ is the best choice of the regularization parameter for LASSO (that minimizes MSE), then $\chi_{opt}$ the corresponding value of $\chi$ is the best value that can be picked in AMP (when the threshold value at iteration $t$ is $\chi \sigma^t$). Clearly the other direction is also true, i.e., if $\chi_{opt}$ is the best value of the threshold for AMP, then the corresponding $\lambda$ will be the optimal value of the regularization parameter for LASSO. 
 
 \item In the second step we connect two versions of AMP: (i) AMP with threshold $\chi_{opt} \sigma^t$ and (ii) optimal AMP with threshold $\tau^{*,t}$ defined in Definition \ref{def:tau}.  We prove that the final solutions of these two AMP algorithms have the same mean square error. Note that at any iteration these two thresholding policies may lead to different estimates. In fact, the AMP algorithm that employs $\tau^{*,1}, \tau^{*,2}, \ldots$ converges to its final solution faster. However, they eventually converge to solutions with the same MSE. 
  \end{enumerate}
  
  These two steps are proved in Sections \ref{sec:step1lastproof} and \ref{sec:step2lastproof} respectively.
   
  \subsubsection{Proof of Step 1}\label{sec:step1lastproof}
  
   According to Theorem \ref{thm:eqpseudolip} the solution of LASSO satisfies the following equation:
  \begin{equation}\label{eq:lassoobslastproof1}
\lim_{p \rightarrow \infty} \frac{1}{p} \|\hat{\beta}^{\lambda}(p)-{\beta}_{o}(p)\|_2^2 = \bE_{B,W} (\eta(B+\hsig W; \chi \hsig)- B))^2,
\end{equation}
where $\hsig$ and $\chi$ satisfy the following equations with $\sigma_{\omega}$ being the variance of the input noise:
\begin{eqnarray} \label{eq:fixedpointlastproof1}
\hsig^2 &=& \sigma_{\omega}^2+\frac{1}{\delta} \mathbb{E}_{B, W} [(\eta(B +\hsig W; \chi \hsig) -B)^2], \label{eq:fixedpoint11last} \\
\lambda &=& \chi \hsig \left(1-\frac{1}{\delta} \mathbb{P}(|B +\hsig W| > \chi \hsig) \right). \label{eq:fixedpoint21last}
\end{eqnarray}
Define $\lambda_{opt} \triangleq \arg\min_{\lambda}  \lim_{p \rightarrow \infty} \frac{1}{p} \|\hat{\beta}^{\lambda}(p)-{\beta}_{o}(p)\|_2^2$. 
This is the value of $\lambda$ that minimizes the asymptotic MSE. Let $\hat{\sigma}_{opt}$ and $\chi_{opt}$ denote the values of $(\hat{\sigma}, \chi)$ that satisfy  \eqref{eq:fixedpointlastproof1} with $\lambda_{opt}$. Suppose that we run the AMP algorithm with thresholding policy $\chi_{opt} \sigma^t$. According to Theorem \ref{thm:ampeqpseudo_lip} the fixed point of this AMP algorithm will satisfy
\[
\lim_{t \rightarrow \infty} \lim_{p \rightarrow \infty} \frac{1}{p} \|\hat{\beta}^t(p) - \beta_o(p)\|_2^2 =  \bE_{B,W} (\eta(B+\hsig W; \chi_{opt} \hsig)- B))^2,
\] 
 where 
\begin{eqnarray} \label{eq:fixedpointlastproofAMP1}
\hsig^2 &=& \sigma_{\omega}^2+\frac{1}{\delta} \mathbb{E}_{B, W} [(\eta(B +\hsig W; \chi \hsig) -B)^2]. 
\end{eqnarray}
According to Lemma \ref{lem:uniquefixedpointconc}, \eqref{eq:fixedpointlastproofAMP1} has a unique fixed point. Hence, the MSE of AMP with threshold $\chi_{opt}\sigma^t$ is the same as the MSE of LASSO with optimal threshold parameter $\lambda$. As the next step we would like to show that if we use another threshold of the form $\chi \sigma^t$ in AMP, it will lead to larger MSE. Suppose that $\bar{\chi}$ leads to a smaller asymptotic MSE in AMP. Define $\bar{\sigma}$ as the fixed point of  
$\sigma_{\omega}^2+\frac{1}{\delta} \mathbb{E}_{B, W} [(\eta(B +\hsig W; \bar{\chi} \hsig) -B)^2]$. Then define 
\[
\bar{\lambda} \triangleq  \bar{\chi} \bar{\sigma} \left(1-\frac{1}{\delta} \mathbb{P}(|B +\bar{\sigma} W| > \bar{\chi} \bar{\sigma}) \right).
 \]
Suppose that we use $\bar{\lambda}$ in LASSO, then according to Theorem \ref{thm:eqpseudolip} the MSE of LASSO will be the same as MSE of AMP with threshold $\bar{\chi} \sigma^t$ that is lower than the threshold of LASSO for $\lambda_{opt}$. This contradiction confirms that we cannot find $\bar{\chi}$ to reduce the final asymptotic MSE of AMP. 

  \subsubsection{Proof of Step 2}\label{sec:step2lastproof} Suppose that $(\sigma^t, \tau^{*,t})$ converge to $\sigma_\infty, \tau_\infty$ in the optimal AMP algorithm. Our claim is that  $\sigma_\infty$ is the same as $\hat{\sigma}_{opt}$, discussed in the last section. By the definition of $\tau^{*,t}$ we have $\sigma_\infty \leq  \hat{\sigma}_{opt}$. Hence, we only have to prove that $\sigma_\infty \geq \hat{\sigma}_{opt}$. Define $\chi_\infty \triangleq \frac{\tau_\infty}{\sigma_\infty}$. Suppose that we run AMP with thresholding policy $\chi_\infty \sigma^t$. Then the fixed point of this new version of AMP will satisfy 
\begin{eqnarray}\label{eq:fixedpointoptimalchi2}
\hsig^2 &=& \sigma_{\omega}^2+\frac{1}{\delta} \mathbb{E}_{B, W} [(\eta(B +\hsig W; \chi_\infty \hsig) -B)^2]. 
\end{eqnarray}
 First note that this equation has a solution at $\hsig = \sigma_\infty$. That is because $\chi_\infty \sigma_\infty = \tau_\infty$ and \eqref{eq:fixedpointoptimalchi2} becomes the same as the fixed point equation for the optimal AMP. Secondly, note that according to Lemma \ref{lem:uniquefixedpointconc}, \eqref{eq:fixedpointoptimalchi2} has a unique fixed point. Therefore, AMP with threshold $\chi_\infty \sigma^t$ will converge to this unique fixed point. Since by construction the MSE of AMP with threshold $\chi_{opt} \sigma^t$ is smaller than AMP with threshold $\chi_{\infty} \sigma^t$, we conclude that $\sigma_\infty \geq \hat{\sigma}_{opt}$. This completes the proof of the second step.

\section{Simulation results}\label{sec:simulation results}
\subsection{Objectives}
Our theoretical results are based on the asymptotic analysis. Asymptotic analysis is usually criticized for its lack of applicability to the medium problem sizes. In this section our goal is to evaluate the performance of our risk estimates and our tuning schemes in medium problem sizes through simulations. Our results in this section intend to answer the following questions:

\begin{enumerate}
\item The first question of practical importance is for what problem sizes such asymptotic analysis is useful. While addressing this question in its full generality is quite challenging, we will provide some simulation results to provide some rule of thumb for practitioners and also shed some light on the finite sample size applicability of our results.

\item Under the asymptotic settings the choice of the parameters in the risk estimate and the tuning schemes are clear. However, they are less clear in the medium problem sizes. In Section \ref{sec:practical_bisec} we show that the parameters of our algorithms can be tuned easily for medium problem sizes as well.  Furthermore, we will present a parameter free version of approximate message passing that requires no tuning from the user. 

\item We present some simulations on the overall performance of our parameter-free AMP algorithm and compare it with some of the other existing versions of AMP.   
\end{enumerate}

\subsection{Risk estimate and impact of sample size} This section explores the accuracy of the risk estimate presented in \eqref{eq:empriskdefamp}. We first explain how we set $h$ and $\sigma^t$ in \eqref{eq:empriskdefamp} and then show the accuracy of the final estimate in Section \ref{sec:riskest_finitep}. 

\begin{figure}[h!]
\centering
\includegraphics[width= 9cm]{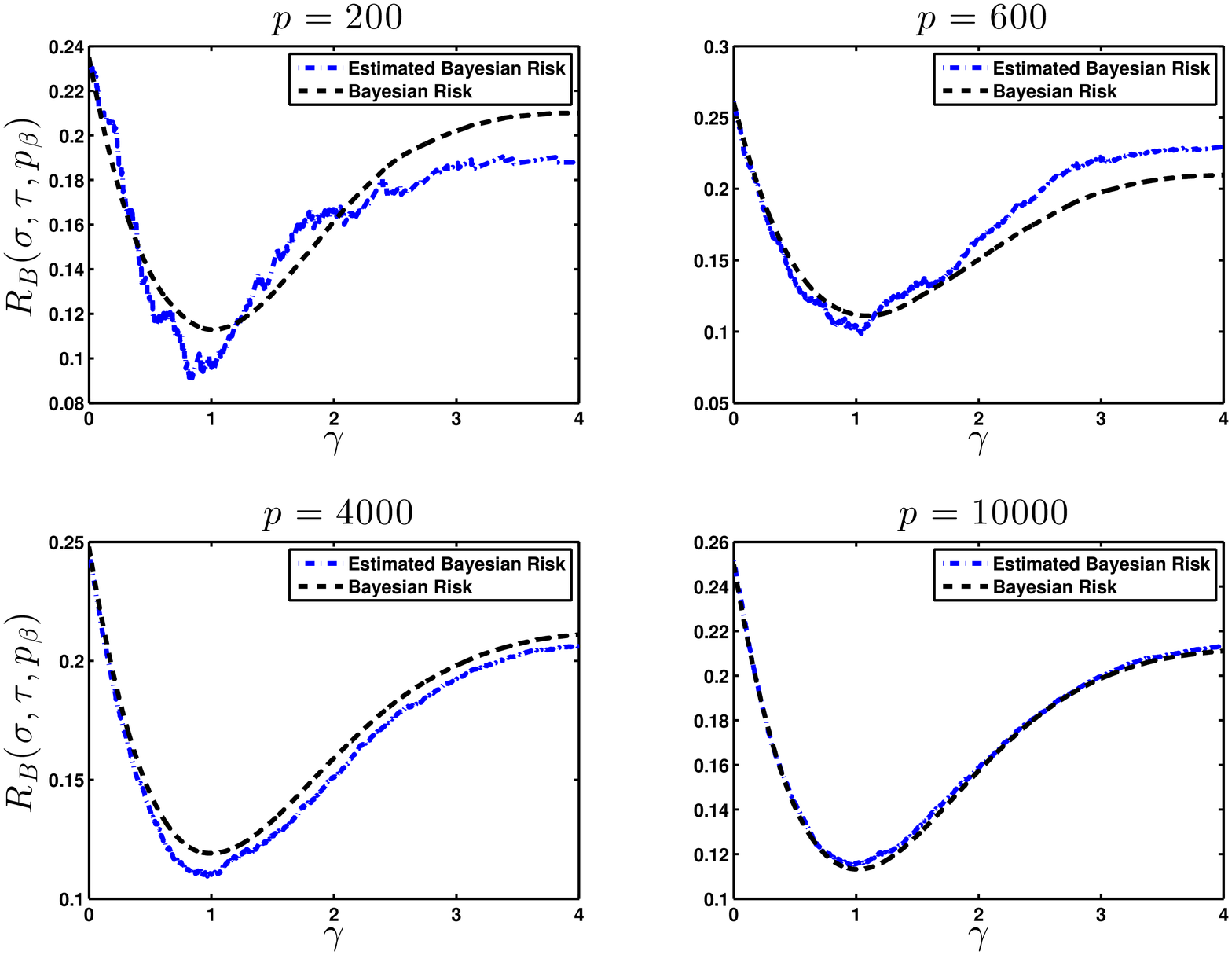}
\caption{Performance of the risk estimator for different values of $p$ in the 1\ts{st} iteration of AMP. In this experiment $\delta=0.85, \rho=0.25$ and we consider noiseless measurements. Black curve is the Bayes risk to which our estimates will converge as $p \rightarrow \infty$. }
\label{fig:N1}
\end{figure}

\begin{figure}[h!]
\centering
\includegraphics[width= 9cm]{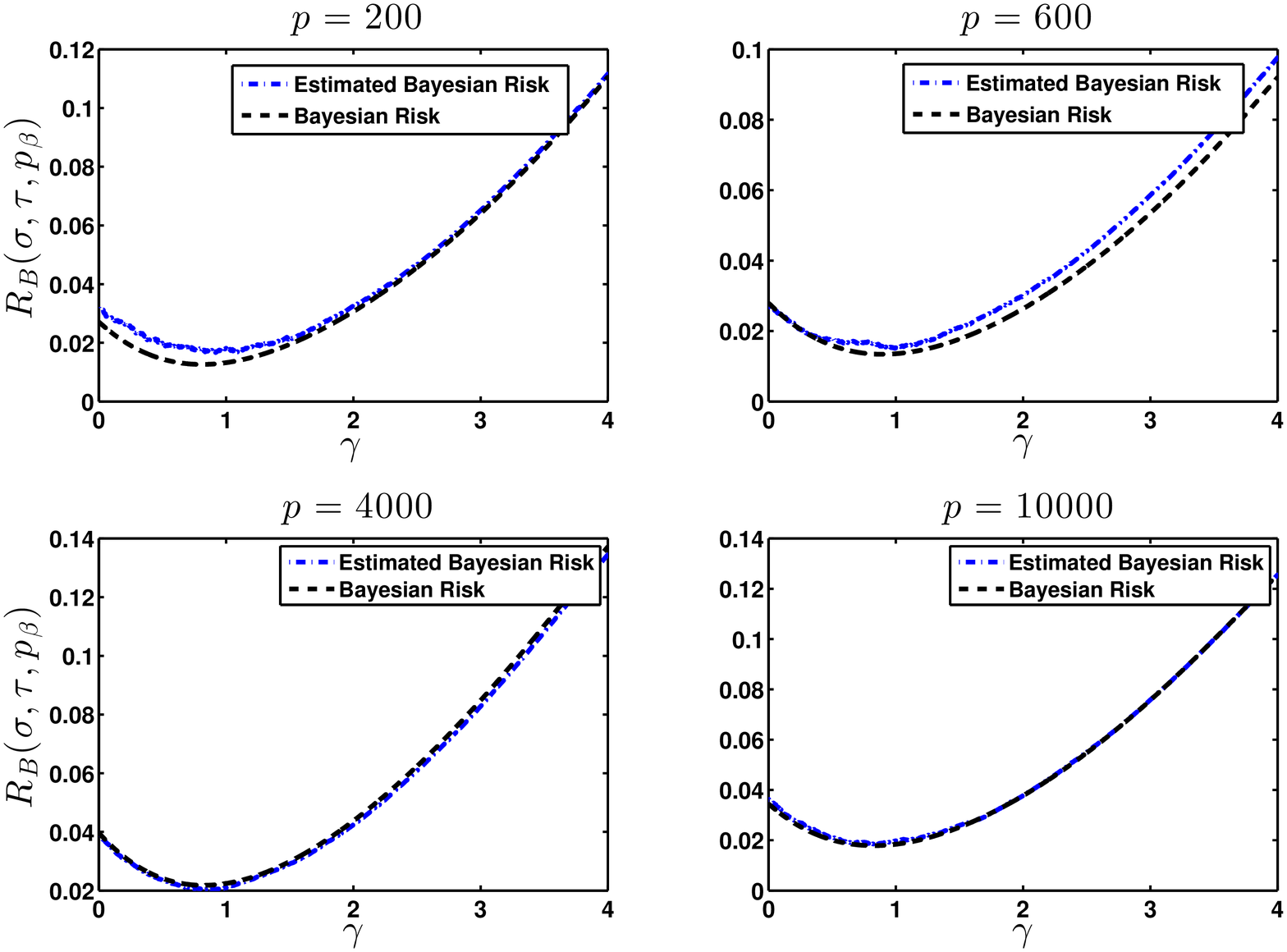}
\caption{Performance of the risk estimator for different values of $p$ in the 10\ts{th} iteration of AMP. In this experiment $\delta=0.85, \rho=0.25$ and we consider noiseless measurements. Black curve is the Bayes risk to which our estimates will converge as $p \rightarrow \infty$.}
\label{fig:N4}
\end{figure}

\subsubsection{Smoothing parameter $h$}
The risk estimate, given in \eqref{eq:empriskdefamp}, employs a Gaussian kernel with bandwidth $h$ for smoothing the soft-thresholding function. While this step is required for our theoretical analysis, our simulations suggest that the estimate is accurate even for $h=0$. $h=0$ is hence used in all our simulations. We believe all our theoretical results are also true when $h=0$. However we leave the theoretical justification of this claim for future investigations.   

\subsubsection{Estimating the noise variance}
The second issue with our risk estimate is the existence of $\sigma^t$ in \eqref{eq:empriskdefamp} that is not known in practice. As we mentioned in Section \ref{ssec:sureestimate}, the estimate $\left( \hat{\sigma}^t \right)^2 = \frac{1}{n} \sum_{i=1}^n (z^t_i)^2$ provides a consistent estimate of $\sigma^t$ and all the results we proved regarding the risk estimate are also accurate even if we replace $\sigma^t$ with $\hat{\sigma}^t$. In all the simulations reported here we have used the estimate of the variance instead of its actual value. 

\subsubsection{Impact of $p$ on the risk estimate}\label{sec:riskest_finitep}
The main goal of this section is to evaluate the accuracy of the risk estimate for different values of $p$. In our simulations we have assumed that $\beta_o$ has $k$ non-zero coefficients, all of which are equal to $1$. This distribution is known to be the least favorable distribution for LASSO and AMP (with soft thresholding) \cite{DoMaMoNSPT}.  We present the average risk and also the estimated risk at iterations $1$ and $10$ of AMP. Given the notation $\rho=k/n$ we have considered the following three cases:  
\begin{enumerate}

\item Figure \ref{fig:N1} and Figure \ref{fig:N4}: $\delta=0.85$, and $\rho=0.25$, $\sigma_w = 0$, $p \in \{200,600,4000,10000\}$,

\item Figure \ref{fig:N2} and Figure \ref{fig:N5}: $\delta=0.85$, and $\rho=0.25$, $\sigma_w = 0.5$, $p \in \{200,600,4000,10000\}$,

\item Figure \ref{fig:N3} and Figure \ref{fig:N6}: $\delta=0.2$, and $\rho=0.1$, $\sigma_w = 0.1$, $p \in \{200,600,4000,10000\}$.

\end{enumerate}

The results of our simulations are summarized in Figures \ref{fig:N1} to \ref{fig:N6}. A rule of thumb that these figures suggest is that if $p>5000$, the risk estimate seems to be accurate. In many cases our estimate is accurate even if $p< 5000$. However, it is usually inaccurate when $p< 500$.

\begin{figure}[h!]
\centering
\includegraphics[width= 9cm]{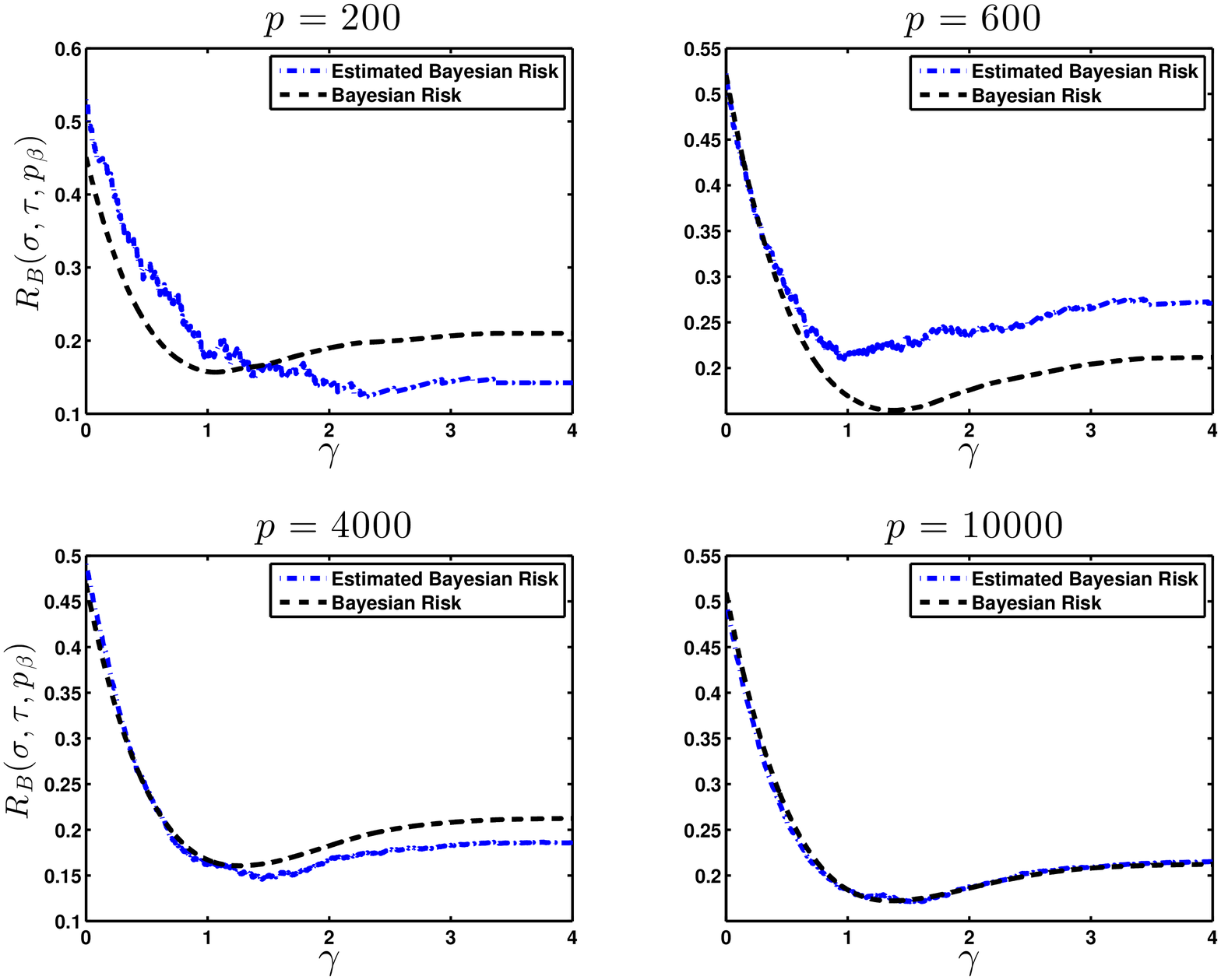}
\caption{Performance of the risk estimator for different values of $p$ in the 1\ts{st} iteration of AMP. In this experiment $\delta=0.85, \rho=0.25$ and the standard deviation of measurement noise is 0.5. Black curve is the Bayes risk to which our estimates will converge as $p \rightarrow \infty$.}
\label{fig:N2}
\end{figure}

\begin{figure}[h!]
\centering
\includegraphics[width= 9cm]{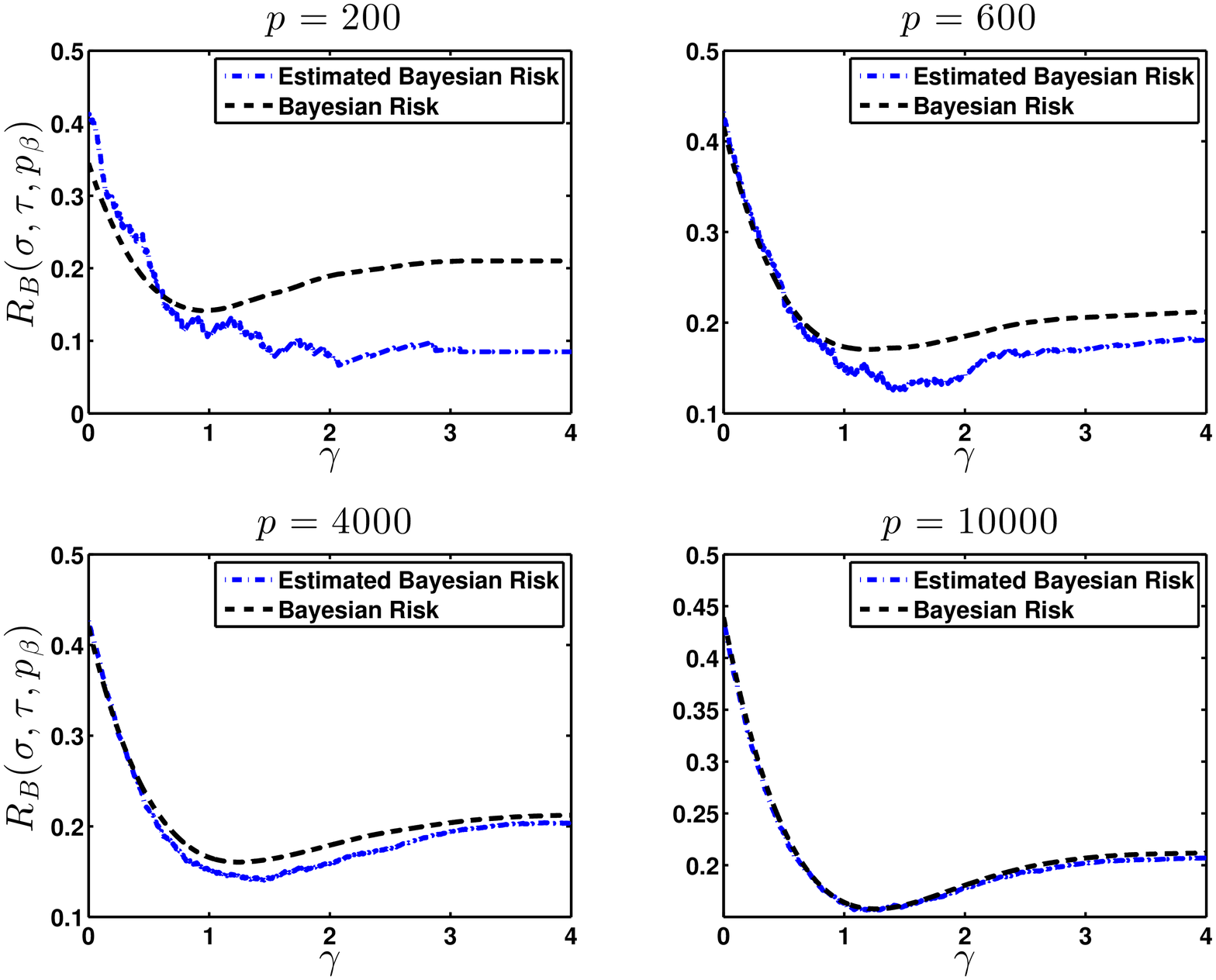}
\caption{Performance of the risk estimator for different values of $p$ in the 10\ts{th} iteration of AMP. In this experiment $\delta=0.85, \rho=0.25$ and the standard deviation of measurement noise is 0.5. Black curve is the Bayes risk to which our estimates will converge as $p \rightarrow \infty$.}
\label{fig:N5}
\end{figure}

\begin{figure}[h!]
\centering
\includegraphics[width= 9cm]{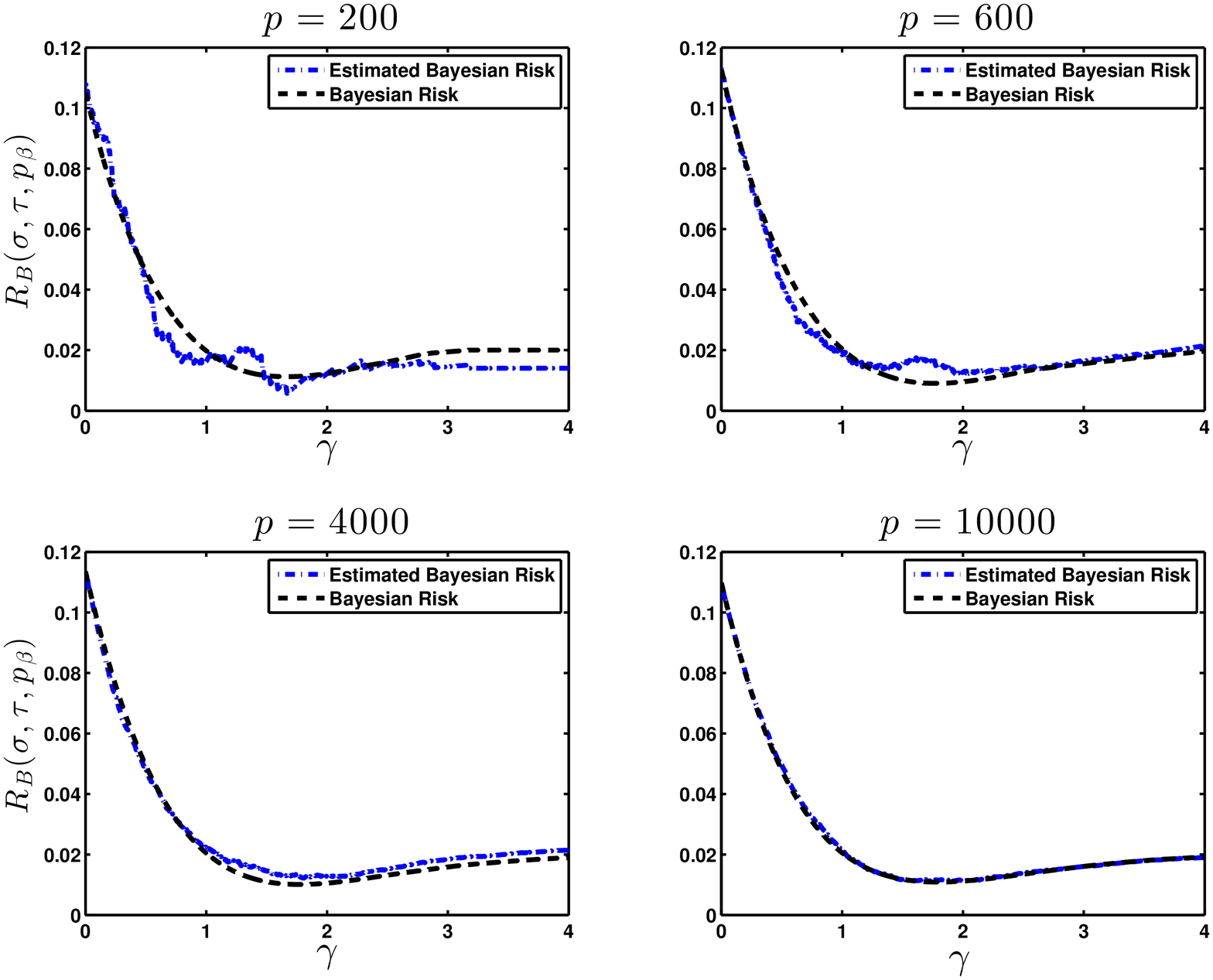}
\caption{Performance of the risk estimator for different values of $p$ in the 1\ts{st} iteration of AMP. In this experiment $\delta=0.2, \rho=0.1$ and the standard deviation of measurement noise is 0.1. Black curve is the Bayes risk to which our estimates will converge as $p \rightarrow \infty$.}
\label{fig:N3}
\end{figure}

\begin{figure}[h!]
\centering
\includegraphics[width= 9cm]{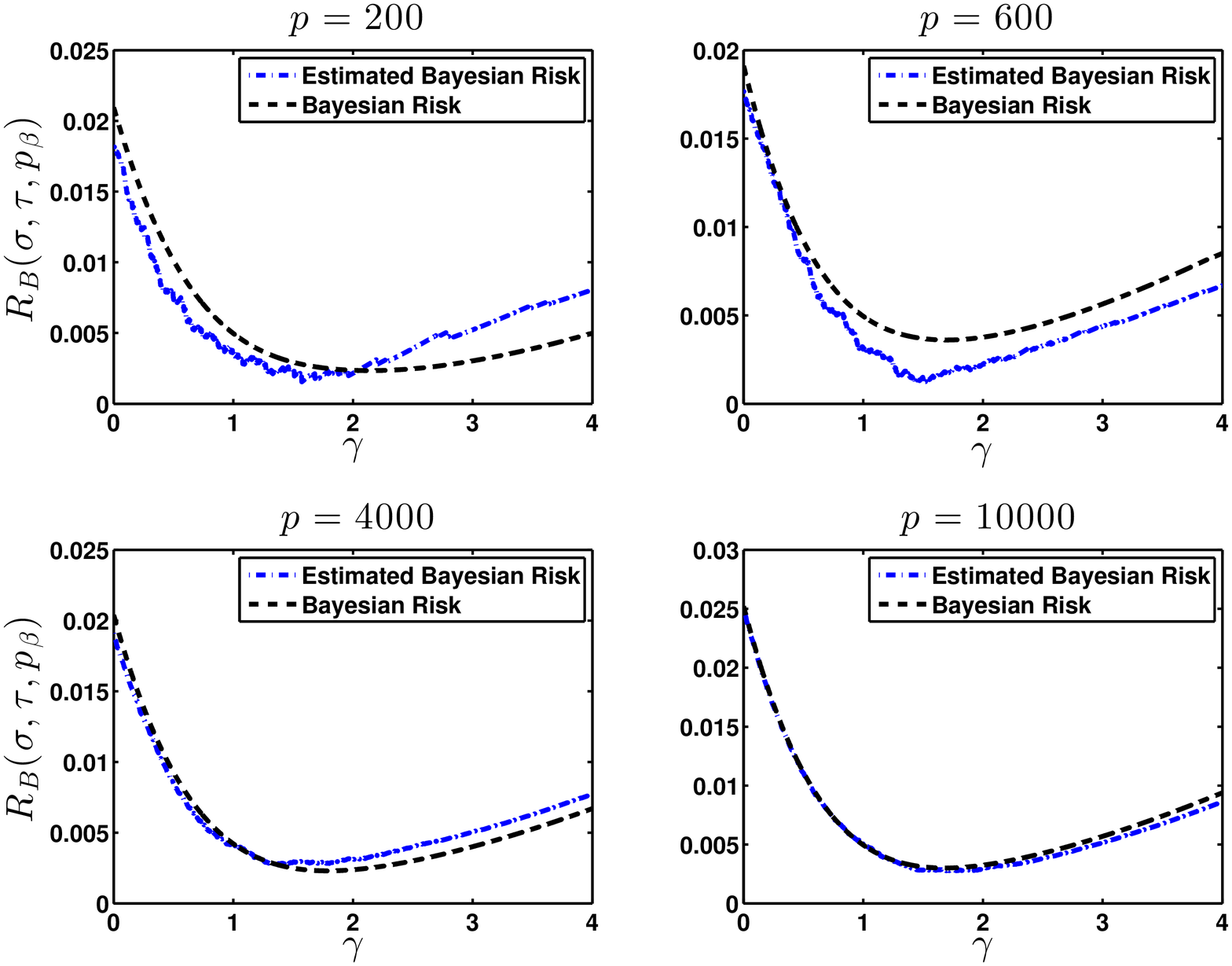}
\caption{Performance of the risk estimator for different values of $p$ in the 10\ts{th} iteration of AMP. In this experiment $\delta=0.2, \rho=0.1$ and the standard deviation of measurement noise is 0.1. Black curve is the Bayes risk to which our estimates will converge as $p \rightarrow \infty$.}
\label{fig:N6}
\end{figure}

\subsection{Practical bisection method}\label{sec:practical_bisec}
In this section we explore the performance of the bisection method we proposed in Section \ref{ssec:sureestimate}.

\subsubsection{Roadmap}
The bisection method introduced in Section \ref{ssec:sureestimate} has the parameters $\epsilon$ and $\Delta$, $\bar{\tau}$, and $\underline{\tau}$. Setting these parameters under the asymptotic setting is straightforward; Set $\epsilon$, $\Delta$ and $\underline{\tau}$ to a very small number, e.g. $10^{-10}$, and set $\tau$ to a large number. Under these settings the algorithm will work well in the limit $p \rightarrow \infty$. This is not necessarily true in medium problem sizes as we will describe later. Hence, before evaluating the performance of our algorithm on medium problem sizes, we should explain how these parameters are set. As we will show in the next few sections, setting these parameters is straightforward for even medium problem sizes. Sections \ref{sec:bisect_eps}, \ref{sec:bisect_delta}, and \ref{sec:bisect_tau} explore the impact of these parameters on the performance of the bisection algorithm and then present an automated method for setting these parameters. Section \ref{sec:practicalbisect} evaluates the performance of the automated bisection method on several problem instances. 

\subsection{Reparameterization} 
 A simple reparametrization of AMP may help us in setting the parameters of the bisection method. Define $\gamma^t \triangleq \tau^t/ \sigma^t$.  The problem of estimating the optimal value of $\tau^t$ is the same as that of $\gamma^t$ for $\sigma^t>0$. Furthermore, at iteration $t$ define $\bar{\gamma} = \bar{\tau}/ \sigma$ and also $\underline{\gamma} = \underline{\tau}/ \sigma$.


\subsubsection{Setting $\epsilon$}\label{sec:bisect_eps}
As described in Section \ref{ssec:sureestimate} we expect $\epsilon$ to be small. The exact value of $\epsilon$ impacts the number of iterations the bisection algorithm requires for convergence and also the accuracy of the bisection method. Since $15$ iterations of bisection is sufficient for an accurate estimate of $\gamma^t$ (note that at every iteration of the bisection method the size of the search interval is halved), we can set the number of iterations of the bi-section method to 15 and set $\epsilon$ to $0$. If we want to reduce the number of iterations of the bisection method, then we can pick larger values of $\epsilon$. However, note that each iteration of the bisection method is in-expensive and whether we do it for $10$ or $15$ iterations does not affect the overall computational complexity of the algorithm much.   

%

\subsubsection{Setting $\Delta$} \label{sec:bisect_delta}
 The second parameter of the bisection method is $\Delta$. While asymptotically we want $\Delta$ to be very small, in finite sample sizes this is slightly more tricky. If $\Delta$ is too small, then our estimate of the derivative of $\hat{R}$, introduced in Section \ref{ssec:sureestimate}, will not be accurate and hence the algorithm may be trapped at a point far from the optimal value of $\tau^t$. Figure \ref{fig:Delta} shows this phenomenon. This figure shows the performance of the bisection method for 6 different values of $\Delta$. As is clear in this figure, the algorithm fails when $\Delta$ is too small ( $\Delta=10^{-3}$ ) or too large ($\Delta > 1$ ). However, for a wide range of values of $\Delta$, i.e. from $\Delta= 0.01$ to $\Delta = 1$, the algorithm performs well.  Also, note that as we increase the dimension, the range of values of $\Delta$ for which the algorithm works well expands as predicted by our asymptotic results.  This phenomenon can be observed by comparing Figures \ref{fig:Delta} and \ref{fig:Delta2} and is due to the fact that our risk estimate becomes more accurate as $p$ increases. Furthermore, remember that the main reason we preferred the bisection method over the grid search in the AMP tuning  was the fact that we have to apply it at every iteration of AMP. If we wanted to apply it in only one iteration, then we could have done a grid search. This motivates the following approach for setting $\Delta$: Set $\Delta$ accurately in the first iteration and use the same $\Delta$ for the other iterations of AMP. Hence, we do the following steps for setting $\Delta$.

\begin{figure}[h!]
\includegraphics[width= 13cm]{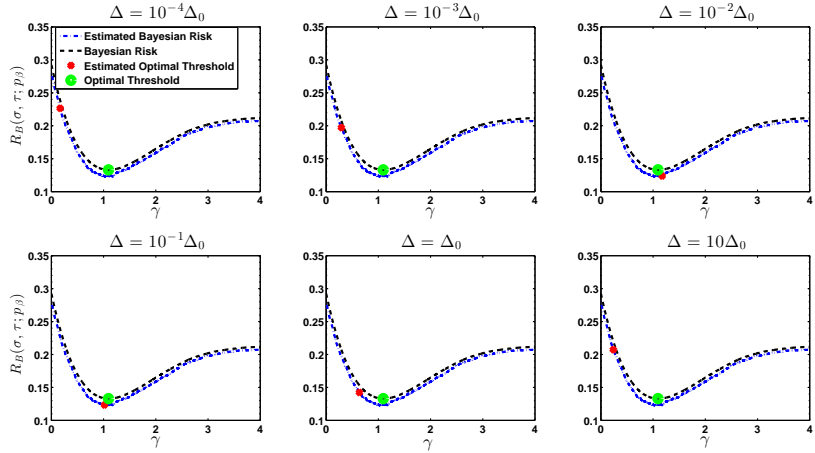}
\caption{Performance of modified bisection algorithm in estimating $\hat{\gamma}_{opt}$ for different values of $\Delta$. In this experiment $p=4000, \delta=0.85, \rho=0.25$, and the standard deviation of the noise of the measurements $\sigma$ is 0.2. Blue dashed curve and black dashed curve show the estimated and the true risk functions, respectively. Green circle shows the optimal threshold. Finally, the red cross shows the estimated optimal threshold. }
\label{fig:Delta}
\end{figure}
\begin{figure}[h!]
\includegraphics[width=13cm]{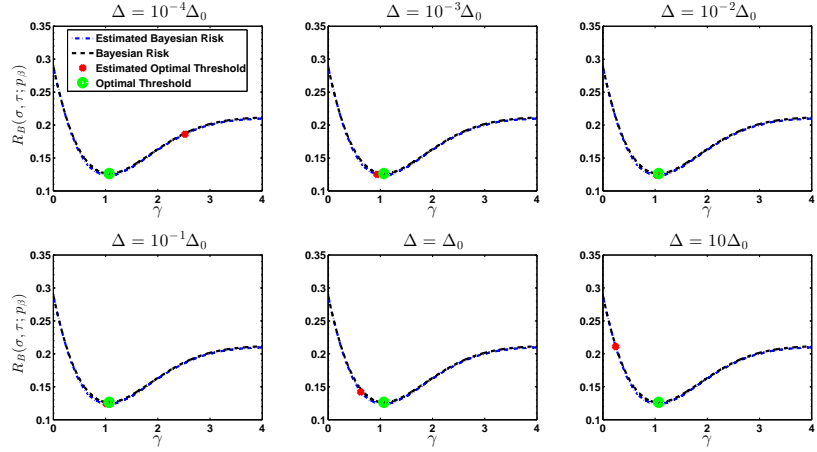}
\caption{Performance of modified bisection algorithm in estimating $\hat{\gamma}_{opt}$ for different values of $\Delta$. In this experiment $p=40000, \delta=0.85, \rho=0.25$, and the standard deviation of the noise of the measurements $\sigma$ is 0.2. Blue dashed curve and black dashed curve show the estimated and true risk functions, respectively. Green circle shows the optimal threshold. Finally, the red cross shows the estimated optimal threshold. }
\label{fig:Delta2}
\end{figure}

\begin{enumerate}[i.]
\item Set $\Delta$ according to the following procedure in the first iteration. For every $\Delta \in \mathcal{D} = \{10^{-\ell_1}, 5\times10^{- \ell_1+1}, \ldots, 5^{\ell_2} \times 10^{-\ell_1}  \}$. Typical values for $\ell_1$ and $\ell_2$ are $-5$ and $6$ respectively. Run the bisection method for the first iteration of AMP with these different values. Pick the one that leads to the best $\gamma$, i.e., the one that has the minimum risk estimate.  Call this value of $\Delta$, $\Delta^*$. 
\item Use $\Delta^*$ for all the other iterations of AMP. 
\end{enumerate}

Note that the computational complexity of this approach is in general less than six times that of a bisection method in the first iteration and is the same for the other iterations. Hence, it does not have a major effect on the overall computational complexity of AMP.

\subsubsection{Setting $\bar{\gamma}$ and $\underline{\gamma}$} \label{sec:bisect_tau}
Setting these two parameters is also straightforward. $\underline{\gamma} \geq 0$. Hence, we set it to $0$. Also we set $\bar{\gamma} = \sup_i |(\beta^t + X^*z^t)_i|/ \sigma^t$. Note that any value of $\gamma > \bar{\gamma}$ leads to zero estimate and will hence give the same estimate as $\bar{\gamma}$.

\subsection{Performance of the practical subsection method}\label{sec:practicalbisect}

\subsubsection{Summary of the bisection method}

Our practical bisection method is summarized in the following flowchart. In this section we evaluate the quality of the estimates obtained from the modified bisection algorithm at different iterations of AMP. We use following experimental set-up outlined in Algorithm \ref{alg:Bisect}.

\begin{itemize}
\item[(i)] We set $p=2000$ in all experiments unless we mention otherwise. Number of measurements $n$ and the level of sparsity $k$ are obtained according to $n=\lfloor \delta p \rfloor$ and $k=\lfloor \rho n \rfloor$. In these experiments we set $\delta =0.85$ and $\rho=0.25$. $X$ is a measurement matrix having iid entries drawn from Gaussian distribution $N\left(0,\frac{1}{n}\right)$. The signal to be reconstructed $\beta_o\in \mathbb{R}^p$ has only $k$ non-zero values. We have tested the performance on several distributions for the non-zero entries of $\beta_o$. However, in the following experiments we consider a point mass at 1 for the distribution for non-zero entries of $\beta_o$.
\item[(ii)] The maximum number of iterations of AMP is set to 200. One can study other stopping rules to improve the efficiency of the algorithm. The measurements are given by $y = X\beta_o + w$, where $w\sim N(0, \sigma_w^2 I)$. We consider 2 different cases for the noise: $\sigma_w = 0, 0.2$. 

\end{itemize}

\begin{algorithm}                      
\caption{Finding the $\arg \min_{\gamma} \hat{R}(\tau)$ with the modified bisection algorithm}         
\label{alg:Bisect}                     
\begin{algorithmic}                    
\REQUIRE $\hat{R}(\tau)$
\ENSURE $\arg \min_{\gamma} \hat{R}(\tau)$ 
\STATE $\bar{\gamma} = \sup_i |(\beta^t + X^*z^t)_i|/ \sigma^t$
\STATE $\underline{\gamma} =0$
\FOR {$i=1:15$}
\STATE $\gamma=\frac{\underline{\gamma}+\bar{\gamma}}{2}$
\STATE $\text{Diff}=\frac{\hat{R}(\gamma+\Delta^*)-\hat{R}(\gamma)}{\Delta^*}$
\IF{$\text{Diff}>0$}
	\STATE $\bar{\gamma}=\gamma$
\ELSIF{$\text{Diff}<0$}
\STATE $\underline{\gamma}=\gamma$
\ELSE
\STATE $\text{Break}$
\ENDIF
\ENDFOR

\end{algorithmic}
\end{algorithm}

  Figure \ref{fig:noise_free} shows the performance of modified bisection algorithm for a set of noise-free measurements, i.e. $\sigma_w=0$. It contains 4 plots each of which corresponds to a specific iteration of AMP. Each plot in Figure \ref{fig:noise_free} contains the Bayesian risk (defined in \eqref{equ:BayesRisk}), estimate of the Bayesian risk, the estimate of the optimal value of $\gamma$ obtained from the bisection method, $\hat{\gamma}_{opt}$, and the optimal value of $\gamma$, $\gamma_{opt}$.
\begin{figure}[h!]
\includegraphics[width= 11cm]{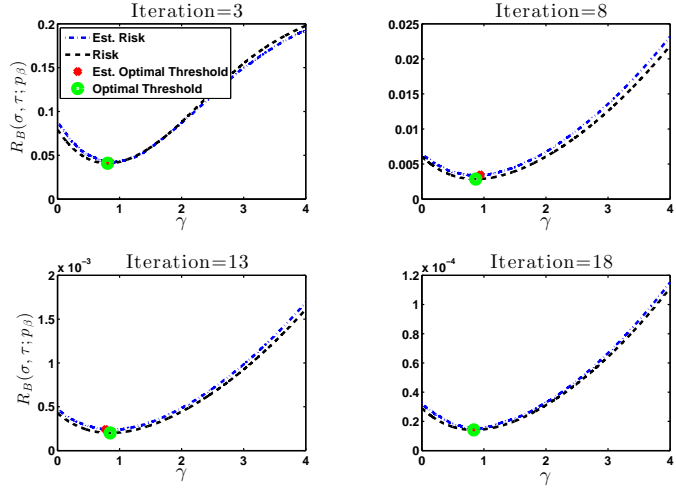}
\caption{Performance of modified bisection method in estimating $\hat{\gamma}_{opt}$ in different iterations of AMP. In this experiment $p=2000, \delta=0.85, \rho=0.25$, and we consider noiseless measurements.}
\label{fig:noise_free}
\end{figure}

We now add some measurement noise to our system and set $\sigma_w= 0.2$ and repeat the same experiment. Figure \ref{fig:Est_Effect_Iteration_Sig2} summarizes our simulation result. 


\begin{figure}[h!]
\includegraphics[width= 11cm]{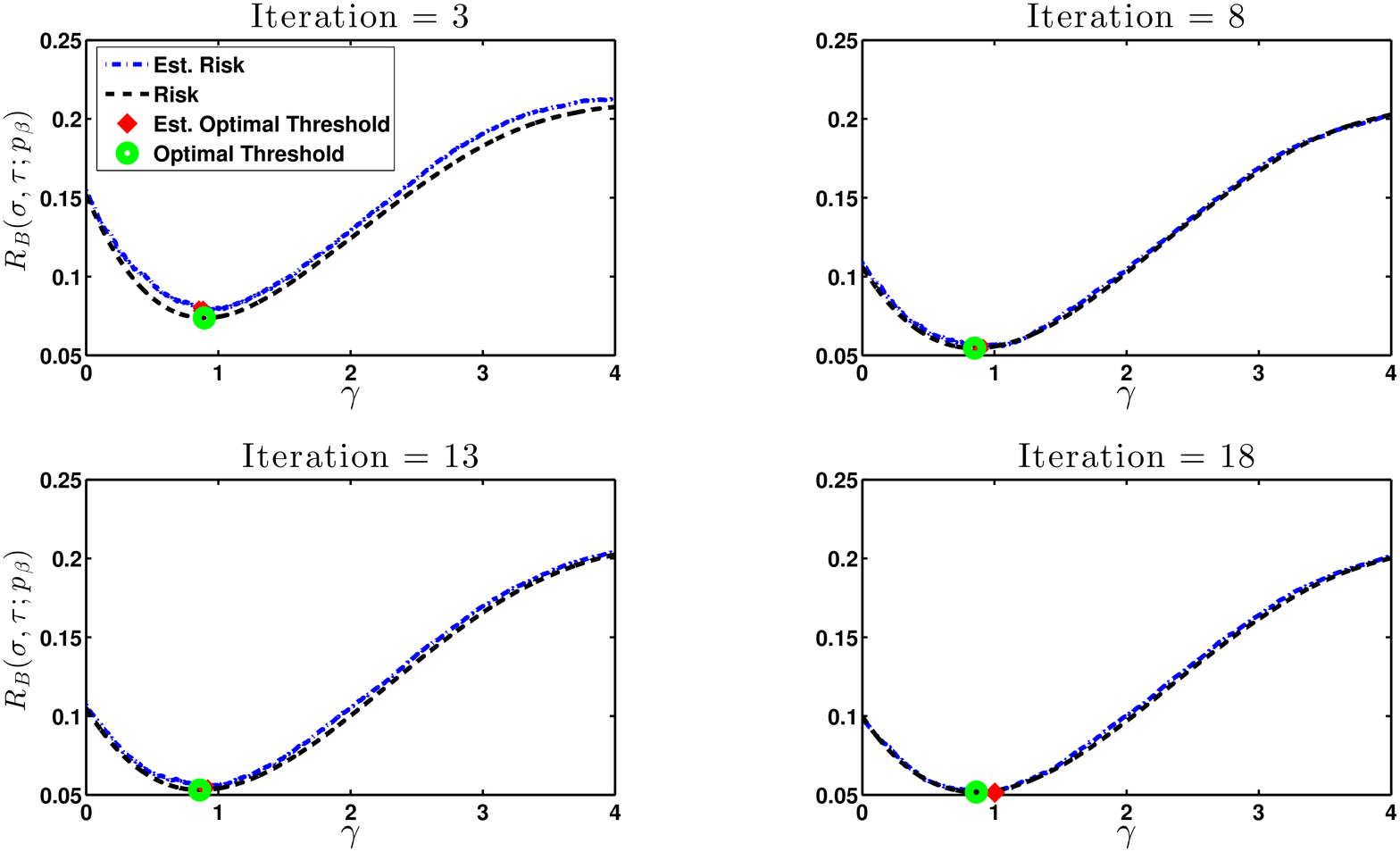}
\caption{Performance of modified bisection method in estimating $\hat{\gamma}_{opt}$ in different iterations of AMP. In this experiment $p=2000, \delta=0.85, \rho=0.25$, and the standard deviation of the noise of the measurements $\sigma$ is 0.2.}
\label{fig:Est_Effect_Iteration_Sig2}
\end{figure}

\subsubsection{Comparison with other tuning approaches of AMP} 
Now we compare the performance of our proposed algorithm (modified bisection method for AMP tuning) with other tuning approaches proposed for AMP. There are two main approaches that have been proposed elsewhere: (i) Maximin tuning that picks the parameter that achieves the highest phase transition \cite{MaDo09sp, DoMaMo09}, and (ii) minimax tuning that selects the parameter that achieves the lowest noise sensitivity \citep{DoMaMoNSPT}. Among these two approaches the second approach is not practical since it requires the sparsity level of $\beta_o$. Furthermore, to be optimal for any signal, it requires to know the distribution of the non-zero coefficients of $\beta_o$ as well. Hence, while the second method is a useful tool for theoretical analysis it cannot be considered a practical parameter tuning scheme. On the other hand, the maximin method is a method of tuning for the noiseless settings and has shown good performance in simulations \cite{DoMaMo09}. In this section, we compare the performance of our algorithm with the maximin approach under the noiseless setting (the setting in which maximin is designed for) to show that even in such cases our approach provides a better convergence rate than the maximin approach.  

Figure \ref{fig:MSE} compares the convergence of the AMP for different tuning schemes. As we can see from the figures, the tuning scheme proposed in this paper is able to reconstruct the original signal up to a certain reconstruction error by the least number of iterations. This is much faster than the AMP that uses Maximin tuning. For instance, the AMP with maximin tuning achieves requires more than 17 iterations the achieve the accuracy that parameterless AMP achieves in 10 iterations. Note that the extra calculations that the bisection method requires in 10 iterations, is much less than that of a one matrix-vector multiplication. Hence, overall with less computations we achieve more accurate results. Another advantage of parameterless AMP over the AMP that is tuned according to maximin approach is that parameterless AMP adapts itself to the distribution of $\beta$, and the noise variance and will achieve the optimal MSE, while maximin AMP is only optimal for the noiseless settings.

We should also note that although grid-search AMP performs almost like the parameterless AMP, it needs an exhaustive search among many threshold values to find the optimal one and hence is computationally expensive. 

\begin{figure}[h!]
\centering
\includegraphics[width= 11cm]{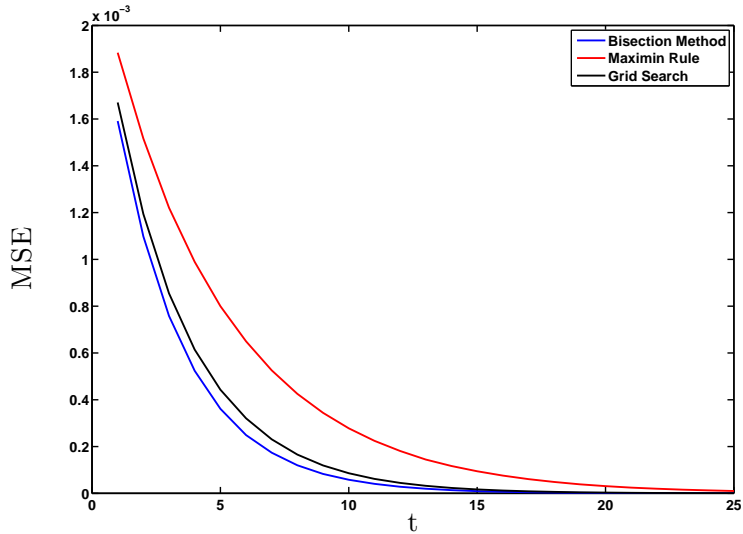}
\caption{MSE of AMP at each iteration for three different threshold setting approaches. The blue curve corresponds to the modified bisection method introduced in this paper. In the black curve, the threshold is set to a constant value. This constant threshold, which causes the smallest reconstruction error, is found by the grid search method. In the red curve, threshold is set to the value which gives the best phase transition \citep{DoMaMoNSPT}.}
\label{fig:MSE}
\end{figure}

 \subsection{Simulation Details}\label{sec:simulationdetails}

Here we include the details of the simulations whose results we reported in the previous sections.

\subsubsection{Figure \ref{fig:activeset} }\label{subsec:fig:activeset}
The dataset we used in this simulation is taken from \cite{efron2004least}. The response variables $y \in \mathbb{R}^{442}$ correspond to the diabetes progression in one year in 442 patients. We have 10  variables, namely age, sex, body mass index, average blood pressure, and six blood serum measurements. Therefore $\beta \in \mathbb{R}^{10}$.  
We have solved LASSO for different values of  $\lambda$ and presented the number of nonzero elements in $\beta$, i.e. $\|\beta \|_0$, as a function of $\lambda$ in Figure \ref{fig:activeset} . As mentioned previously, for this specific problem, this function is not monotone decreasing.

\subsubsection{Figure \ref{fig:LassoPathRandom}}\label{subsec:fig:LassoPathRandom}
Here, we consider the recovery of a sparse signal in the presence of measurement noise. $\beta_o \in \mathbb{R}^{2000}$ is a sparse signal with 100 nonzero elements equal to 1.  We observe an undesampled noisy version of $\beta_o$, i.e.,  $y=X \beta_o+w$ where $X$ is a $1000\times 2000$ matrix with iid entries having normal distribution $N(0,1)$, and $w$ is a $1000\times 1$ noise vector with iid entries having the Gaussian distribution $N(0,0.7)$. We consider 100 equi-spaced values of $\lambda$ between 0 and 0.25. We then solve the LASSO and measure the $\|\hat{\beta}_{\lambda}\|_0$. Figure \ref{fig:LassoPathRandom} represents the $\|\hat{\beta}_{\lambda}\|_0$ versus $\lambda$.

\subsubsection{Figures  \ref{fig:MSE22}}\label{subsec:fig:2RiskFunctions}
The setting here is the same as for Figure \ref{fig:LassoPathRandom} with two differences. First, the $\lambda$ values are 100 equi-spaced values between 0 and 1. Second, the entries of $w$ are obtained from the Gaussian distributions $N(0,2)$ and $N(0,0.4)$.

\section{Conclusions}\label{sec:con}
In this paper, we have characterized the behavior of LASSO's solution and AMP's estimates as a function of the regularization parameter $\lambda$ and threshold parameters $\tau^1, \tau^2, \ldots$ in the asymptotic setting  (the ambient dimension $p \rightarrow \infty$, while the ratio of the number of measurements, $n$, to the ambient dimension is fixed to $\delta$). Our results have demonstrated that in the asymptotic setting this behavior is consistent with our intuition. For instance in the case of LASSO we have : (i) The size of the active set $\|\hat{\beta}^\lambda\|_0/p$ is a decreasing function of $\lambda$. (ii) The mean square error $\|\hat{\beta}_{\lambda} - \beta_o\|_2^2/p$ is a quasi-convex function of $\lambda$.
We also explored one of the practical implications of such results by studying the problem of tuning the regularization or  threshold parameters of LASSO and AMP. We derived a computationally efficient and asymptotically consistent parameter tuning scheme for the AMP algorithm. We have also been able to show that the corresponding AMP converges to a solution that has the same MSE as the solution of LASSO with the optimal value of $\lambda$. 

\bibliographystyle{unsrt}
\bibliography{Annals_main}
\end{document}